\documentclass{article}
\usepackage[left=25 mm, bottom=25 mm, right=25 mm, top=25 mm]{geometry}
\usepackage{amsmath, amssymb, amsthm, csquotes, hyperref, multirow, float, tikz, subfig, blkarray, enumitem}
\usetikzlibrary{arrows, decorations.markings}
\newtheorem{theorem}{Theorem}[section]
\newtheorem{lemma}[theorem]{Lemma}
\newtheorem{conj}[theorem]{Conjecture}
\newtheorem{remark}[theorem]{Remark}
\newtheorem{prop}[theorem]{Proposition}

\usepackage{titlesec}
\titleformat*{\section}{\small\bfseries}
\titleformat*{\subsection}{\small\bfseries}

\newcommand{\rk}{\operatorname{rank}}
\newcommand{\U}{\operatorname{\bf u}}
\newcommand{\V}{\operatorname{\bf v}}
\newcommand{\W}{\operatorname{\bf w}}
\newcommand{\Z}{\operatorname{\bf z}}
\newcommand{\X}{\operatorname{\bf x}}
\newcommand{\Y}{\operatorname{\bf y}}
\newcommand{\0}{\operatorname{\bf 0}}

\hypersetup{colorlinks=true, linkcolor=blue, citecolor=blue}
\setcounter{MaxMatrixCols}{30}

\usepackage{authblk}

\title{Sign patterns that allow algebraic positivity}
\author{Sunil Das\footnote{ Email address: {\tt sunil.das@isid.ac.in}}}
\affil{\small Indian Statistical Institute, New Delhi-110016, India}

\begin{document}
\maketitle
\noindent\hrulefill\vspace*{-3 mm}
\begin{abstract}
\noindent A real square matrix is algebraically positive if there exists a real polynomial $f$ such that $f(A)$ is a positive matrix. In this paper, we give a sufficient condition for a sign pattern matrix to allow algebraic positivity, and give some methods to construct higher-order algebraically positive matrices from some lower-order algebraically positive matrices. We also propose two conjectures related to the problem of allowing algebraic positivity.
\end{abstract}\vspace*{-4 mm}
\noindent\hrulefill
\vspace*{2 mm}\\
\noindent{\small\bf Keywords:} {\small Algebraically positive, Sign pattern, Allow algebraic positivity, Irreducible matrix, AP-irreducible.}

\noindent{\small\bf AMS Subject Classifications:} {\small 15B35, 15B48, 05C50.}

\section{Introduction}
A real matrix is said to be positive if its every entry is positive. Kirkland, Qiao, and Zhan introduced the concept of algebraically positive matrices in 2016. A real square matrix $A$ is algebraically positive if there exists a real polynomial $f$ such that $f(A)$ is a positive matrix. They gave the following characterization of algebraically positive matrices.

\begin{theorem}[\cite{KQZ16}]\label{thm1.1}
A real square matrix is algebraically positive if and only if it has a simple real eigenvalue and corresponding left and right positive eigenvectors.
\end{theorem}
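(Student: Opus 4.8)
The plan is to prove the two implications separately, using the Perron--Frobenius theorem on the positive-matrix side and the spectral projection attached to a simple eigenvalue on the matrix-$A$ side. Throughout I would rely on two basic facts about a polynomial $f$: the eigenvalues of $f(A)$ are exactly the numbers $f(\mu)$ with $\mu\in\sigma(A)$, and — reading off the Jordan form — the generalized $f(\mu_0)$-eigenspace of $f(A)$ is the direct sum of the generalized $\mu$-eigenspaces of $A$ taken over all $\mu\in\sigma(A)$ with $f(\mu)=f(\mu_0)$.

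For sufficiency, suppose $A$ has a simple real eigenvalue $\lambda$ with right eigenvector $v>0$ and left eigenvector $w>0$. Since $v,w>0$ we have $w^{\top}v>0$, so the rank-one spectral projection onto the $\lambda$-eigenspace, $P=vw^{\top}/(w^{\top}v)$, is a well-defined positive matrix. It then remains to realize $P$ as a real polynomial in $A$: the Hermite interpolation data defining $P$ as a function of $A$ (value $1$ at $\lambda$, and value together with all relevant derivatives equal to $0$ at every other eigenvalue) yields a possibly complex polynomial $g$ with $g(A)=P$; writing $g=g_1+ig_2$ with $g_1,g_2$ real and using that $A$ and $P$ are real forces $g_2(A)=0$, so $f:=g_1$ is real with $f(A)=P>0$, showing that $A$ is algebraically positive.

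For necessity, suppose $f(A)=B$ is positive for some real polynomial $f$. By Perron--Frobenius, the spectral radius $\rho=\rho(B)$ is a simple eigenvalue of $B$, it is real and positive, and it has positive right and left eigenvectors $v,w>0$. Because $\rho$ is simple, its generalized eigenspace in $B$ is one-dimensional; by the spectral correspondence above it equals the direct sum of the generalized $\mu$-eigenspaces of $A$ over all $\mu$ with $f(\mu)=\rho$. One-dimensionality forces a single such preimage $\mu$ with one-dimensional generalized eigenspace, so $\mu$ is a simple eigenvalue of $A$; it must be real, since otherwise the complex conjugate $\bar\mu\neq\mu$ would give a second preimage (as $f$ and $\rho$ are real), contradicting one-dimensionality. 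The $\mu$-eigenspace of $A$ then coincides with the $\rho$-eigenspace of $B$, whose generator is the Perron vector $v>0$; applying the same argument to $A^{\top}$ (so that $B^{\top}=f(A^{\top})$ is positive with left Perron vector $w>0$) produces the positive left eigenvector. Hence $A$ has a simple real eigenvalue with positive left and right eigenvectors.

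I expect the routine parts to be the Hermite interpolation and the invocation of Perron--Frobenius, and the crux to be the spectral correspondence used in the necessity direction. The delicate point is to match the generalized eigenspaces of $A$ and $f(A)$ precisely enough that simplicity of $\rho(B)$ can be pushed back to rule out both complex preimages and higher algebraic multiplicity for $A$; establishing this cleanly through the Jordan form of $A$ (each block at $\mu$ mapping to a triangular block of $B$ at $f(\mu)$) is where the real care is needed.
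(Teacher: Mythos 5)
Your proposal is correct, but note that this paper never proves Theorem \ref{thm1.1} at all: it is imported verbatim from \cite{KQZ16}, so there is no in-paper proof to compare against. Judged on its own merits, your argument is sound in both directions: the sufficiency step (spectral projection $P=vw^{\top}/(w^{\top}v)$ realized as a real polynomial in $A$ via Hermite interpolation, with the realness of $f$ extracted by splitting $g=g_1+ig_2$) is airtight because simplicity of $\lambda$ makes $P$ rank one and positivity of $v,w$ makes $w^{\top}v>0$; and the necessity step correctly pushes simplicity of $\rho(B)$ back through the Jordan-form correspondence between generalized eigenspaces of $A$ and of $f(A)$, with the conjugate-pair observation ruling out a non-real preimage. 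One remark on economy: the heaviest part of your argument, the generalized-eigenspace correspondence, can be largely bypassed by the standard commutation trick used in the original source. Since $B=f(A)$ commutes with $A$, the one-dimensional Perron eigenspace of $B$ is $A$-invariant, so $Av=\mu v$ for some real $\mu$ immediately (and likewise $w^{\top}A=\mu' w^{\top}$, with $\mu=\mu'$ because $w^{\top}Av$ can be computed two ways and $w^{\top}v>0$); simplicity of $\mu$ then follows from the multiplicity-counting form of the spectral mapping theorem alone. Your route proves a sharper structural fact (the full matching of generalized eigenspaces), while the commutation trick gets the eigenvector claim with less machinery; either is acceptable.
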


A sign pattern matrix is a matrix whose every entry is a symbol from the set $\{+,-,0\}$. The qualitative class of a matrix $A$, denoted by $Q(A)$, is defined by the set of all real matrices obtained from $A$ by replacing \enquote*{$+$} by some positive number, \enquote*{$-$} by some negative number and \enquote*{0} by the zero number. A sign pattern matrix $A$ allows a property $P$ if at least one matrix in $Q(A)$ has the property $P$ and requires property $P$ if all matrices in $Q(A)$ have the property $P$.

Kirkland, Qiao, and Zhan \cite{KQZ16} characterized symmetric tridiagonal sign pattern matrices that allow algebraic positivity. Later, Das and Bandopadhyay \cite{DB19} generalized this result to find all tree sign pattern matrices allowing algebraic positivity, and Abagat and Pelejo \cite{AP19} described all 3-by-3 sign pattern matrices that allow algebraic positivity. In this paper, we present a class of sign pattern matrices that allow algebraic positivity and give some methods to construct higher-order algebraically positive matrices from some lower-order algebraically positive matrices.

Let $D$ be a directed graph with the vertex set $V(D)=\{v_1,v_2,\ldots,v_r\}$. We denote the arc from vertex $v_i$ to vertex $v_j$ by $(\overrightarrow{v_i,v_j})$. If $V\subseteq V(D)$, then the subgraph of $D$ induced by $V$ is the directed graph obtained from $D$ by removing all vertices in $V(D)\setminus V$ and the arcs incident to them. A directed cycle of length $m$ in $D$ is a sequence $(\overrightarrow{v_{i_1},v_{i_2}}), (\overrightarrow{v_{i_2},v_{i_3}}), \ldots , (\overrightarrow{v_{i_{m-1}},v_{i_m}}), (\overrightarrow{v_{i_m},v_{i_1}})$ of arcs such that all the vertices $v_{i_1},v_{i_2},\ldots,v_{i_m}$ are distinct. A sequence $(\overrightarrow{v_{i_1},v_{i_2}}), (\overrightarrow{v_{i_2},v_{i_3}}), \ldots , (\overrightarrow{v_{i_{m-1}},v_{i_m}})$ of arcs, where all the vertices $v_{i_1},v_{i_2},\ldots,v_{i_m}$ are distinct, is said to be a directed path from $v_{i_1}$ to $v_{i_m}$. The directed graph $D$ is said to be strongly connected, if for any two distinct $i,j$, there is a directed path from $v_i$ to $v_j$. Moreover, $D$ is said to be minimally strongly connected, if it does not remain strongly connected after the deletion of an arc. 

Let us recall from \cite{HL14} that a sign pattern matrix $X$ is said to be a subpattern of $A$ if $X$ is obtained from $A$ by replacing some (possibly none) of its nonzero entries with 0. In this case, $A$ is said to be a super-pattern of $X$. The digraph of a sign pattern matrix $A$ of order $n$, denoted by $D(A)$, is defined by a directed graph with vertex set $\{1,2,\ldots,n\}$, where $(\overrightarrow{i,j})$ is an arc of $D(A)$ if and only if $a_{ij}\neq0$.

In Section \ref{sec2}, we give some methods to construct higher-order algebraically positive matrices from some lower-order algebraically positive matrices, and in Section \ref{sec3}, we give a sufficient condition for a sign pattern matrix to allow algebraic positivity. We follow a constructive approach to prove it. In Section \ref{sec4}, we give an example to illustrate the methodology of that approach. Finally, we conclude the paper with two conjectures related to the problem of allowing algebraic positivity.

\section{Constructions of algebraically positive matrices}\label{sec2}
Throughout this paper, ${\0}$ denotes a zero column vector, and $I$ denotes an identity matrix, whose orders will be clear from the context. Moreover, $\mathbb{C}$ denotes the set of all complex numbers.

The following lemma from \cite[p. 80]{HJ85} is useful to check whether an eigenvalue of a matrix is simple or not.

\begin{lemma}[\cite{HJ85}]\label{lem2.1}
Let $A$ be a square matrix and $\X,\Y\in\mathbb{C}^n\setminus\{\bf 0\}$ such that $A\X=\lambda \X$ and $\Y^*A=\lambda\Y^*$ for some $\lambda\in\mathbb{C}$. If $\lambda$ has geometric multiplicity $1$, then $\lambda$ is simple if and only if $\Y^*\X\neq0$.
\end{lemma}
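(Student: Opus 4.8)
The plan is to invoke the Jordan canonical form of $A$ and exploit the biorthogonality between the columns of the similarity transform and the rows of its inverse. Since the geometric multiplicity of $\lambda$ equals $\dim\ker(A-\lambda I)$, the hypothesis that this multiplicity is $1$ means that exactly one Jordan block of $A$ is associated with $\lambda$; call its size $k$. Then the algebraic multiplicity of $\lambda$ is precisely $k$, so $\lambda$ is simple if and only if $k=1$. The whole proof therefore reduces to showing that $k=1$ if and only if $\Y^*\X\neq0$.

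Write $A=PJP^{-1}$ with $J$ in Jordan form, and, after permuting the blocks, assume $J_k(\lambda)$ occupies the first $k$ coordinates. Let $\mathbf{p}_1,\dots,\mathbf{p}_n$ be the columns of $P$ and $\mathbf{s}_1,\dots,\mathbf{s}_n$ the rows of $P^{-1}$. Comparing columns in $AP=PJ$ gives $A\mathbf{p}_1=\lambda\mathbf{p}_1$, so $\mathbf{p}_1$ is a right eigenvector, while comparing rows in $P^{-1}A=JP^{-1}$ gives $\mathbf{s}_kA=\lambda\mathbf{s}_k$, so $\mathbf{s}_k$ is a left eigenvector---note that it is the last row of the block, not the first, that survives, because the off-diagonal $1$'s of $J_k(\lambda)$ sit on the superdiagonal. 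Since $\rk(A-\lambda I)=\rk\big((A-\lambda I)^*\big)$, the left eigenspace $\ker(A^*-\bar\lambda I)$ is also one-dimensional, so under the stated hypothesis both eigenvectors are determined up to scaling; thus $\X=c\,\mathbf{p}_1$ and $\Y^*=d\,\mathbf{s}_k$ for some nonzero scalars $c,d$.

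It remains to evaluate the pairing. From $P^{-1}P=I$ we read off $\mathbf{s}_i\mathbf{p}_j=\delta_{ij}$, so $\mathbf{s}_k\mathbf{p}_1=\delta_{k1}$ and therefore $\Y^*\X=cd\,\delta_{k1}$. This is nonzero precisely when $k=1$ and zero otherwise; together with the reduction of the first paragraph---where $\lambda$ is simple if and only if $k=1$---this yields the equivalence claimed in the lemma.

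The only real obstacle I anticipate is bookkeeping: pinning down which column of $P$ and which row of $P^{-1}$ carry the right and left eigenvectors (the asymmetry between first column and last row is easy to get backwards) and verifying that the left eigenspace is also one-dimensional so that $\Y^*$ is genuinely forced to be proportional to $\mathbf{s}_k$. Once these are settled, the entire conclusion drops out of the single biorthogonality relation $P^{-1}P=I$.
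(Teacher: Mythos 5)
The paper does not prove this lemma at all: it is quoted verbatim from Horn and Johnson \cite[p.~80]{HJ85}, so there is no in-paper argument to compare against. Your proof is correct and complete. The reduction to the Jordan form is sound: geometric multiplicity $1$ forces a single Jordan block $J_k(\lambda)$, algebraic multiplicity $k$, so simplicity is exactly $k=1$. You correctly identify $\mathbf{p}_1$ (first column of $P$) as the right eigenvector and $\mathbf{s}_k$ (last row of the block in $P^{-1}$) as the left eigenvector --- the asymmetry you flag is indeed the step most often botched, and you get it right. The rank argument $\rk(A-\lambda I)=\rk\bigl((A-\lambda I)^*\bigr)$ legitimately pins both eigenspaces to dimension $1$, so $\X=c\,\mathbf{p}_1$, $\Y^*=d\,\mathbf{s}_k$ with $c,d\neq0$, and then $\Y^*\X=cd\,\mathbf{s}_k\mathbf{p}_1=cd\,\delta_{k1}$ by biorthogonality, which is nonzero precisely when $k=1$. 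This is essentially the standard textbook route (and close in spirit to how Horn and Johnson treat left/right eigenvector pairings), so nothing further is needed.
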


Now we give some methods to construct higher-order algebraically positive matrices from some lower-order algebraically positive matrices using Theorem \ref{thm1.1} and Lemma \ref{lem2.1}.

\begin{theorem}\label{thm2.2}
Let $A$ be a real square matrix of order $n$ such that $a_{1j}<0$ for some $j\in\{1,2,\ldots,n\}$. Suppose that $\lambda>0$ is a simple eigenvalue of $A$, and $A\U=\lambda\U$ and $\V^TA=\lambda\V^T$ for some $\U,\V>\0$. Then $\lambda$ is a simple eigenvalue of $B$, where $B$ is of order $k+n$, given by
$$b_{pq}=\begin{cases}
\lambda-a_{1j},&\mbox{if }p=q\in\{1,2,\ldots,k\};\\
a_{1j},&\mbox{if }(p,q)\in\{(k,k+j),(k+1,1)\}\cup\{(1,2),(2,3),\ldots,(k-1,k)\};\\
a_{p-k,q-k},&\mbox{if }p,q\in\{k+1,k+2,\ldots,k+n\}\mbox{ such that }(p,q)\neq(k+1,k+j);\\
0,&\mbox{otherwise}.
\end{cases}$$
Moreover, a right eigenvector $\W$ and a left eigenvector $\Z$ of $B$ corresponding to $\lambda$ are given by:
$$w_i=\begin{cases}
u_j,&\mbox{if }1\leq i\leq k;\\
u_{i-k},&\mbox{if }k+1\leq i\leq k+n.
\end{cases} \qquad \qquad
z_i=\begin{cases}
v_1,&\mbox{if }1\leq i\leq k;\\
v_{i-k},&\mbox{if }k+1\leq i\leq k+n.
\end{cases}$$
Consequently, $B$ is algebraically positive.
\end{theorem}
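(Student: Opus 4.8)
The plan is to apply Theorem~\ref{thm1.1}. Since the entries of $\W$ are drawn from the positive numbers $u_1,\dots,u_n$ (together with $u_j$) and those of $\Z$ from $v_1,\dots,v_n$, both $\W$ and $\Z$ are positive vectors; hence it is enough to show that $\W$ is a right eigenvector and $\Z$ a left eigenvector of $B$ for $\lambda$, and that $\lambda$ is a \emph{simple} eigenvalue of $B$. The first task, $B\W=\lambda\W$ and $\Z^TB=\lambda\Z^T$, I would settle by a direct computation organized along the block partition $\{1,\dots,k\}\cup\{k+1,\dots,k+n\}$ of the indices.

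To keep the bookkeeping transparent I would record $B$ in block form,
$$B=\begin{pmatrix}(\lambda-a_{1j})I+a_{1j}N & a_{1j}E_{k,j}\\[2pt] a_{1j}E_{1,1} & A-a_{1j}E_{1,j}\end{pmatrix},$$
where $I,N$ are the $k\times k$ identity and the nilpotent shift with ones on the superdiagonal, $E_{1,j}$ is the $n\times n$ matrix unit recording the deleted $(1,j)$ entry of $A$, and $E_{k,j},E_{1,1}$ are the single-entry blocks encoding the arcs $(\overrightarrow{k,k+j})$ and $(\overrightarrow{k+1,1})$. For $B\W=\lambda\W$ the rows $1,\dots,k$ telescope: on each such row the diagonal term $(\lambda-a_{1j})u_j$ and the forward term $a_{1j}u_j$ add up to $\lambda u_j$. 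On row $k+1$ the rerouted term $a_{1j}w_1=a_{1j}u_j$ exactly replaces the contribution of the deleted entry, so that $(B\W)_{k+1}=a_{1j}u_j+\sum_{t\neq j}a_{1t}u_t=\sum_{t}a_{1t}u_t=(A\U)_1=\lambda u_1$, while rows $k+2,\dots,k+n$ merely reproduce $A\U=\lambda\U$. The identity $\Z^TB=\lambda\Z^T$ is checked the same way, the junction correction now appearing in columns $1$ and $k+j$ and invoking $\V^TA=\lambda\V^T$.

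The crux is simplicity, for which I would use Lemma~\ref{lem2.1}; its hypothesis has an easy half and a substantive half. The easy half is $\Z^T\W\neq0$, since a direct sum yields $\Z^T\W=k\,v_1u_j+\V^T\U>0$ from $u_j,v_1>0$ and $\U,\V>\0$. The substantive half is that $\lambda$ has geometric multiplicity $1$ for $B$. I would establish this by solving $(B-\lambda I)\mathbf{x}=\0$ for $\mathbf{x}=(\mathbf{s};\mathbf{t})$ with $\mathbf{s}\in\mathbb{C}^k$, $\mathbf{t}\in\mathbb{C}^n$. The top block gives $a_{1j}(N-I)\mathbf{s}+a_{1j}t_je_k=\0$, where $e_k$ is the $k$-th standard basis vector; since $a_{1j}\neq0$ (this is the one place the hypothesis $a_{1j}<0$ enters) this forces $s_1=\cdots=s_k=t_j$. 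Feeding $s_1=t_j$ into the bottom block, the two junction terms cancel and leave $(A-\lambda I)\mathbf{t}=\0$. As $\lambda$ is simple for $A$ its geometric multiplicity there is $1$, so $\mathbf{t}=c\U$; then $\mathbf{s}=c\,u_j(1,\dots,1)^T$ and $\mathbf{x}=c\W$. Thus $\ker(B-\lambda I)$ is spanned by $\W$ and $\lambda$ has geometric multiplicity $1$.

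With both conditions in hand, Lemma~\ref{lem2.1} shows $\lambda$ is a simple eigenvalue of $B$; combined with the positive eigenvectors $\W,\Z$, the criterion of Theorem~\ref{thm1.1} then yields that $B$ is algebraically positive. I expect the geometric-multiplicity computation to be the only real obstacle---the eigenvector verifications are routine apart from the careful junction bookkeeping at rows and columns $k+1$, $k$, and $k+j$.
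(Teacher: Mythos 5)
Your proposal is correct and follows essentially the same route as the paper: verify $B\W=\lambda\W$ and $\Z^TB=\lambda\Z^T$ directly, establish that $\lambda$ has geometric multiplicity $1$ for $B$, and conclude simplicity via Lemma~\ref{lem2.1} and algebraic positivity via Theorem~\ref{thm1.1}. The only cosmetic difference is that you compute $\ker(B-\lambda I)$ explicitly to get geometric multiplicity $1$, whereas the paper gets the same fact by a column operation ($C_{k+j}\leftarrow C_{k+j}+C_1+\cdots+C_k$) showing $\rk(B-\lambda I)=n+k-1$; these are two views of the same computation, and you additionally make explicit the check $\Z^T\W=k\,v_1u_j+\V^T\U>0$ that the paper leaves implicit.
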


\begin{proof}
Introducing the row and the column indices, we can write $B$ as
$$\begin{blockarray}{ccccccccccccc}
\begin{block}{c[ccccc|ccccccc]}
_1&\lambda -a_{1j}&a_{1j}&0&\cdots&0& 0&\cdots&\cdots&\cdots&\cdots&\cdots&0 \\
\vdots&0&\ddots&\ddots&\ddots&\vdots&\vdots&&&&&&\vdots \\
\vdots&\vdots&\ddots&\ddots&\ddots&0&\vdots&&&&&&\vdots \\
\vdots&\vdots&&\ddots&\ddots&a_{1j}& 0&\cdots&\cdots&\cdots&\cdots&\cdots&0 \\
_k&0&\cdots&\cdots&0&\lambda -a_{1j}&0&\cdots&0&a_{1j}&0&\cdots&0 \\
\cline{2-13}
_{k+1}& a_{1j}&0&\cdots&\cdots&0& a_{11}&\cdots&a_{1,j-1}&0&a_{1,j+1}&\cdots&a_{1n} \\
_{k+2}& 0&\cdots&\cdots&\cdots&0& a_{21}&\cdots&\cdots&a_{2j}&\cdots&\cdots&a_{2n} \\
\vdots& \vdots&&&&\vdots& \vdots&&&&&&\vdots \\
_{k+n}& 0&\cdots&\cdots&\cdots&0& a_{n1}&\cdots&\cdots&a_{nj}&\cdots&\cdots&a_{nn} \\
\end{block}\\[-2 mm]
&_1&\ldots&\ldots&\ldots&_k&_{k+1}&\ldots&\ldots&_{k+j}&\ldots&\ldots&_{k+n} \\
\end{blockarray}.$$

Clearly, $B\W=\lambda\W$ and $\Z^TB=\lambda\Z^T$ and thus $\lambda$ is an eigenvalue of $B$. Further, $\W$ and $\Z$ are positive vectors. By elementary column operations, we have the following:
$$B-\lambda I\xrightarrow{C_{k+j}\leftarrow C_{k+j}+C_1+C_2+\cdots+C_k}
\left[\begin{array}{ccccc|c}
-a_{1j}&a_{1j}&0&\cdots&0&\0^T \\
0&\ddots&\ddots&\ddots&\vdots&\vdots \\
\vdots&\ddots&\ddots&\ddots&0&\vdots \\
\vdots&&\ddots&\ddots&a_{1j}&\vdots \\
0&\cdots&\cdots&0&-a_{1j}&\0^T \\
\hline
a_{1j}&0&\cdots&\cdots&0&\multirow{2}{*}{$A-\lambda I$} \\
\0&\cdots&\cdots&\cdots&\0&
\end{array}\right]$$
Since $\lambda$ is a simple eigenvalue of $A$, $\rk(A-\lambda I)=n-1$ and thus $\rk(B-\lambda I)=n+k-1$. Therefore by Lemma \ref{lem2.1}, $\lambda$ is a simple eigenvalue of $B$, and hence by Theorem \ref{thm1.1}, $B$ is algebraically positive.
\end{proof}

Applying similar arguments as in the proof of Theorem \ref{thm2.2}, we can prove the following theorems of this section.

\begin{theorem}\label{thm2.3}
Let $A$ be a real square matrix of order $n$ such that $a_{1j}>0$ for some $j\in\{1,2,\ldots,n\}$. Suppose that $\lambda>0$ is a simple eigenvalue of $A$, and $A\U=\lambda\U$ and $\V^TA=\lambda\V^T$ for some $\U,\V>\0$. Then $\lambda$ is a simple eigenvalue of $B$, where $B$ is of order $k+n$, given by
$$b_{pq}=\begin{cases}
2\lambda,&\mbox{if }p=q\in\{1,2,\ldots,k\};\\
-\lambda,&\mbox{if }(p,q)\in\{(k,k+j)\}\cup\{(1,2),(2,3),\ldots,(k-1,k)\};\\
-a_{1j},&\mbox{if }(p,q)=(k+1,1);\\
a_{p-k,q-k},&\mbox{if }p,q\in\{k+1,k+2,\ldots,k+n\}\mbox{ such that }(p,q)\neq(k+1,k+j);\\
2a_{1j},&\mbox{if }(p,q)=(k+1,k+j);\\
0,&\mbox{otherwise}.
\end{cases}$$ 
Moreover, a right eigenvector $\W$ and a left eigenvector $\Z$ of $B$ corresponding to $\lambda$ are given by:
$$w_i=\begin{cases}
u_j,&\mbox{if }1\leq i\leq k;\\
u_{i-k},&\mbox{if }k+1\leq i\leq k+n.
\end{cases} \qquad \qquad
z_i=\begin{cases}
\frac{a_{1j}v_1}{\lambda},&\mbox{if }1\leq i\leq k;\\
v_{i-k},&\mbox{if }k+1\leq i\leq k+n.
\end{cases}$$
Consequently, $B$ is algebraically positive.
\end{theorem}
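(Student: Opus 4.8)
The plan is to follow the proof of Theorem \ref{thm2.2} almost verbatim, since the only changes are in a few scalar entries of $B$ and a rescaling of the left eigenvector. First I would display $B$ in the same $2\times 2$ block form used there, with a $k\times k$ upper-bidiagonal ``tail'' block (diagonal entries $2\lambda$, superdiagonal entries $-\lambda$), the single linking entry $b_{k,k+j}=-\lambda$, the return entry $b_{k+1,1}=-a_{1j}$, and the $n\times n$ bottom-right block equal to $A$ with its $(1,j)$ entry replaced by $2a_{1j}$. With $B$ written this way, there are three things to check: that $B\W=\lambda\W$, that $\Z^TB=\lambda\Z^T$, and that $\W,\Z>\0$; one rank computation then finishes the argument exactly as before.

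The two eigenvector identities are direct computations, and I would only record the cancellations that make them work. In the tail rows $1,\dots,k-1$ the entries $2\lambda$ and $-\lambda$ act on the constant block $w_1=\cdots=w_k=u_j$ and telescope to $\lambda u_j$; row $k$ behaves the same way because $w_{k+j}=u_j$ as well. The one nontrivial row is $k+1$, where
$$-a_{1j}u_j+2a_{1j}u_j+\sum_{q\neq j}a_{1q}u_q=\sum_{q=1}^{n}a_{1q}u_q=(A\U)_1=\lambda u_1,$$
so the doubled entry $2a_{1j}$ is exactly what compensates the negative return entry $-a_{1j}$. For the left identity the scaling $z_i=a_{1j}v_1/\lambda$ on the tail is forced: in column $1$ one needs $2\lambda z_1-a_{1j}v_1=\lambda z_1$, and in column $k+j$ the terms $-\lambda z_k+2a_{1j}v_1+\sum_{i\ge 2}a_{ij}v_i$ must collapse to $\lambda v_j=(\V^TA)_j$; both hold precisely when $z_1=\cdots=z_k=a_{1j}v_1/\lambda$. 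Positivity of $\W$ is clear, and positivity of $\Z$ uses the hypotheses $a_{1j}>0$, $v_1>0$, $\lambda>0$, so that $a_{1j}v_1/\lambda>0$.

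The heart of the proof is the rank computation, and here I would apply the very same column operation $C_{k+j}\leftarrow C_{k+j}+C_1+\cdots+C_k$ to $B-\lambda I$. In the tail, the columns $C_1,\dots,C_k$ of $B-\lambda I$ telescope: their sum has a $\lambda$ in row $k$ and a $-a_{1j}$ in row $k+1$ and is zero elsewhere. Adding this to $C_{k+j}$ cancels the linking entry $-\lambda$ in row $k$ (decoupling the tail from the bottom block) and converts the bottom-block entry $2a_{1j}$ in row $k+1$ back to $a_{1j}$, so that the bottom-right block becomes exactly $A-\lambda I$. The transformed matrix then has a zero top-right block and is block lower-triangular with an invertible $k\times k$ upper-bidiagonal tail block (diagonal $\lambda\neq 0$) sitting above the block $A-\lambda I$ of rank $n-1$; since the tail block is invertible, $\rk(B-\lambda I)=k+(n-1)=n+k-1$, and hence $\lambda$ has geometric multiplicity $1$.

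Finally, since $\W,\Z>\0$ we have $\Z^T\W>0$, so Lemma \ref{lem2.1} upgrades ``geometric multiplicity $1$'' to ``$\lambda$ simple,'' and Theorem \ref{thm1.1} then gives that $B$ is algebraically positive. The only place demanding care — the step I would treat as the main obstacle — is confirming that the same column operation as in Theorem \ref{thm2.2} still works despite the changed entries: one must check that the doubled diagonal $2\lambda$ and the doubled entry $2a_{1j}$, together with the sign changes $-\lambda$ and $-a_{1j}$, are calibrated so that the telescoping sum simultaneously clears the link in row $k$ and restores $A-\lambda I$ in the bottom block. Once that calibration is verified, everything else is identical to the previous proof.
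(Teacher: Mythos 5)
Your proposal is correct and is exactly the argument the paper intends: the paper proves Theorem \ref{thm2.3} by "applying similar arguments as in the proof of Theorem \ref{thm2.2}," i.e., verifying the stated eigenvector identities, applying the column operation $C_{k+j}\leftarrow C_{k+j}+C_1+\cdots+C_k$ to $B-\lambda I$ to get a block-triangular form with an invertible bidiagonal block over $A-\lambda I$, and then invoking Lemma \ref{lem2.1} and Theorem \ref{thm1.1}. Your verification that the calibration of $2\lambda$, $-\lambda$, $2a_{1j}$, $-a_{1j}$ makes the telescoping sum clear the link in row $k$ and restore $A-\lambda I$ is precisely the point, and it is carried out correctly.
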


\begin{remark}
Introducing the row and the column indices, we can write $B$ in Theorem \ref{thm2.3} as
$$\begin{blockarray}{ccccccccccccc}
\begin{block}{c[ccccc|ccccccc]}
_1&2\lambda&-\lambda&0&\cdots&0& 0&\cdots&\cdots&\cdots&\cdots&\cdots&0 \\
\vdots&0&\ddots&\ddots&\ddots&\vdots&\vdots&&&&&&\vdots \\
\vdots&\vdots&\ddots&\ddots&\ddots&0&\vdots&&&&&&\vdots \\
\vdots&\vdots&&\ddots&\ddots&-\lambda& 0&\cdots&\cdots&\cdots&\cdots&\cdots&0 \\
_k&0&\cdots&\cdots&0&2\lambda&0&\cdots&0&-\lambda&0&\cdots&0 \\
\cline{2-13}
_{k+1}& -a_{1j}&0&\cdots&\cdots&0& a_{11}&\cdots&a_{1,j-1}&2a_{1j}&a_{1,j+1}&\cdots&a_{1n} \\
_{k+2}& 0&\cdots&\cdots&\cdots&0& a_{21}&\cdots&\cdots&a_{2j}&\cdots&\cdots&a_{2n} \\
\vdots& \vdots&&&&\vdots& \vdots&&&&&&\vdots \\
_{k+n}& 0&\cdots&\cdots&\cdots&0& a_{n1}&\cdots&\cdots&a_{nj}&\cdots&\cdots&a_{nn} \\
\end{block}\\[-2 mm]
&_1&\ldots&\ldots&\ldots&_k&_{k+1}&\ldots&\ldots&_{k+j}&\cdots&\cdots&_{k+n} \\
\end{blockarray}.$$
\end{remark}

\begin{theorem}\label{thm2.4}
Let $0<\epsilon<1$, and let $A$ be a real square matrix of order $n$ such that $a_{11}>0$. If $\lambda>0$ is a simple eigenvalue of $A$ such that $A\U=\lambda\U$ and $\V^TA=\lambda\V^T$ for some $\U,\V>\0$, then the following statements are true.
\begin{enumerate}
\item For every $j\in\{1,2,\ldots,k\}$, $\lambda$ is a simple eigenvalue of $B$, where $B$ is of order $k+n-1$, given by
$$b_{pq}=\begin{cases}
\lambda,&\mbox{if }(p,q)\in\{(1,2),(2,3),\ldots,(k-1,k)\};\\
a_{11},&\mbox{if }(p,q)=(k,1);\\
a_{p-k+1,1},&\mbox{if }p\in\{k+1,k+2,\ldots,k+n-1\}\mbox{ and }q=j;\\
a_{p-k+1,q-k+1},&\mbox{if }p\in\{k,k+1,\ldots,k+n-1\}\mbox{ and }q\in\{k+1,k+2,\ldots,k+n-1\};\\
0,&\mbox{otherwise}.
\end{cases}$$  
Moreover, a right eigenvector $\W$ and a left eigenvector $\Z$ of $B$ corresponding to $\lambda$ are given by:
$$w_i=\begin{cases}
u_1,&\mbox{if }1\leq i\leq k-1;\\
u_{i-k+1},&\mbox{if }k\leq i\leq k+n-1.
\end{cases} \qquad \qquad
z_i=\begin{cases}
\frac{a_{11}v_1}{\lambda},&\mbox{if }1\leq i\leq j-1;\\
v_1,&\mbox{if }j\leq i\leq k-1;\\
v_{i-k+1},&\mbox{if }k\leq i\leq k+n-1.
\end{cases}$$
Consequently, $B$ is algebraically positive. 
\item For every $j\in\{1,2,\ldots,k-1\}$ and $s\in\{1,2,\ldots,k\}\setminus\{j+1\}$, $\lambda$ is a simple eigenvalue of $B$, where $B$ is of order $k+n-1$, given by
$$b_{pq}=\begin{cases}
\epsilon\lambda,&\mbox{if }(p,q)=(j,j+1);\\
\lambda,&\mbox{if }(p,q)\in\{(1,2),(2,3),\ldots,(k-1,k)\}\setminus\{(j,j+1)\};\\
a_{11},&\mbox{if }(p,q)=(k,1);\\
(1-\epsilon)\lambda,&\mbox{if }(p,q)=(j,s);\\
a_{p-k+1,1},&\mbox{if }p\in\{k+1,k+2,\ldots,k+n-1\}\mbox{ and }q=s;\\
a_{p-k+1,q-k+1},&\mbox{if }p\in\{k,k+1,\ldots,k+n-1\}\mbox{ and }q\in\{k+1,k+2,\ldots,k+n-1\};\\
0,&\mbox{otherwise}.
\end{cases}$$ 
If $s\leq j$, then a right eigenvector $\W$ and a left eigenvector $\Z$ of $B$ corresponding to $\lambda$ are given by:
$$w_i=\begin{cases}
u_1,&\mbox{if }1\leq i\leq k-1;\\
u_{i-k+1},&\mbox{if }k\leq i\leq k+n-1.
\end{cases} \qquad \qquad
z_i=\begin{cases}
\frac{a_{11}v_1}{\lambda},&\mbox{if }1\leq i\leq s-1;\\
\frac{v_1}{\epsilon},&\mbox{if }s\leq i\leq j;\\
v_1,&\mbox{if }j+1\leq i\leq k-1;\\
v_{i-k+1},&\mbox{if }k\leq i\leq k+n-1.
\end{cases}$$   
Consequently, $B$ is algebraically positive. Further, for some suitable $\epsilon$ satisfying $0<\epsilon<1$,
$$b_{js}z_j+\sum\limits_{i=1}^{n-1}b_{k+i,s}z_{k+i}=(1-\epsilon)\frac{\lambda v_1}{\epsilon}+\sum\limits_{i=1}^{n-1}a_{i+1,1}v_{i+1}=(1-\epsilon)\frac{\lambda v_1}{\epsilon}+(\lambda-a_{11})v_1=\left(\frac{\lambda}{\epsilon}-a_{11}\right) v_1>0.$$
If $s\geq j+2$, then a right eigenvector $\W$ and a left eigenvector $\Z$ of $B$ corresponding to $\lambda$ are given by:
$$w_i=\begin{cases}
u_1,&\mbox{if }1\leq i\leq k-1;\\
u_{i-k+1},&\mbox{if }k\leq i\leq k+n-1.
\end{cases} \qquad \qquad
z_i=\begin{cases}
\frac{a_{11}v_1}{\lambda},&\mbox{if }1\leq i\leq j;\\
\frac{\epsilon a_{11}v_1}{\lambda},&\mbox{if }j+1\leq i\leq s-1;\\
v_1,&\mbox{if }s\leq i\leq k-1;\\
v_{i-k+1},&\mbox{if }k\leq i\leq k+n-1.
\end{cases}$$
Consequently, $B$ is algebraically positive. Further, for some suitable $\epsilon$ satisfying $0<\epsilon<1$,
$$b_{js}z_j+\sum\limits_{i=1}^{n-1}b_{k+i,s}z_{k+i}=(1-\epsilon)a_{11} v_1+\sum\limits_{i=1}^{n-1}a_{i+1,j}v_{i+1}=(1-\epsilon)a_{11} v_1+(\lambda-a_{11})v_1=(\lambda-\epsilon a_{11})v_1>0.$$
\item For every $s\in\{2,3,\ldots,k\}$, $\lambda$ is a simple eigenvalue of $B$, where $B$ is of order $k+n-1$, given by
$$b_{pq}=\begin{cases}
\lambda,&\mbox{if }(p,q)\in\{(1,2),(2,3),\ldots,(k-1,k)\};\\
\epsilon a_{11},&\mbox{if }(p,q)=(k,1);\\
(1-\epsilon)a_{11},&\mbox{if }(p,q)=(k,s);\\
a_{p-k+1,1},&\mbox{if }p\in\{k+1,k+2,\ldots,k+n-1\}\mbox{ and }q=s;\\
a_{p-k+1,q-k+1},&\mbox{if }p\in\{k,k+1,\ldots,k+n-1\}\mbox{ and }q\in\{k+1,k+2,\ldots,k+n-1\};\\
0,&\mbox{otherwise}.
\end{cases}$$ 
In each case, a right eigenvector $\W$ and a left eigenvector $\Z$ of $B$ corresponding to $\lambda$ are given by:
$$w_i=\begin{cases}
u_1,&\mbox{if }1\leq i\leq k-1;\\
u_{i-k+1},&\mbox{if }k\leq i\leq k+n-1.
\end{cases} \qquad \qquad
z_i=\begin{cases}
\frac{\epsilon a_{11}v_1}{\lambda},&\mbox{if }1\leq i\leq s-1;\\
v_1,&\mbox{if }s\leq i\leq k-1;\\
v_{i-k+1},&\mbox{if }k\leq i\leq k+n-1.
\end{cases}$$
Consequently, $B$ is algebraically positive. Further, for some suitable $\epsilon$ satisfying $0<\epsilon<1$,
$$b_{ks}z_k+\sum\limits_{i=1}^{n-1}b_{k+i,s}z_{k+i}=(1-\epsilon)a_{11} v_1+\sum\limits_{i=1}^{n-1}a_{i+1,1}v_{i+1}=(1-\epsilon)a_{11} v_1+(\lambda-a_{11})v_1=\left(\lambda-\epsilon a_{11}\right) v_1>0.$$
\end{enumerate}
\end{theorem}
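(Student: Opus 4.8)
The plan is to follow the template established in the proof of Theorem \ref{thm2.2}, carrying it out three times (once for each of the three constructions). For each part I would first record the block form of $B$ (as in the remark after Theorem \ref{thm2.3}), then verify directly that the stated $\W$ is a right eigenvector and $\Z$ a left eigenvector for $\lambda$, check that both are positive, establish that $\lambda$ has geometric multiplicity $1$, and finally invoke Lemma \ref{lem2.1} together with Theorem \ref{thm1.1}.

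For the right eigenvector equation $B\W=\lambda\W$, the mechanism is that every coordinate of $\W$ lying in the top ``cycle'' block equals $u_1$, so each superdiagonal row $(i,i+1)$ gives $\lambda w_{i+1}=\lambda u_1=\lambda w_i$; in part (2) the row $j$ is split into the entries $\epsilon\lambda$ at $(j,j+1)$ and $(1-\epsilon)\lambda$ at $(j,s)$, but since $w_{j+1}=w_s=u_1$ the two contributions recombine to $\lambda u_1$. Rows $k$ and $k+i$ then reproduce, respectively, the first row and the remaining rows of the identity $A\U=\lambda\U$, because the first column of $A$ has been split off into columns $1$ and $j$ (or $s$) while columns $2,\dots,n$ of $A$ sit in columns $k+1,\dots,k+n-1$. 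The left eigenvector equation $\Z^TB=\lambda\Z^T$ is checked column by column; the only delicate columns are those where the $\epsilon$-split or the embedded first column of $A$ appears, and there the verification uses the first-coordinate identity $\sum_{i=1}^{n-1}a_{i+1,1}v_{i+1}=(\lambda-a_{11})v_1$ extracted from $\V^TA=\lambda\V^T$. The displayed ``Further'' identities are exactly these column computations: substituting the explicit entries of $B$ and the components of $\Z$, the superdiagonal term combines with the $\epsilon$-weighted and $A$-column terms to produce $\lambda z_s$, and the partial sums are positive once $\epsilon\in(0,1)$ is chosen small enough (so that $\lambda/\epsilon-a_{11}>0$, respectively $\lambda-\epsilon a_{11}>0$). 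Positivity of $\W$ and $\Z$ is then immediate, since every component is a positive multiple of some $u_m$ or $v_m$ with $\U,\V>\0$, $\lambda,a_{11}>0$, and $0<\epsilon<1$.

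For simplicity of $\lambda$ I would establish that its geometric multiplicity is $1$, i.e.\ $\rk(B-\lambda I)=k+n-2$. One route mirrors Theorem \ref{thm2.2}: perform elementary column operations telescoping the superdiagonal cycle so as to reassemble the split first column of $A$, reducing $B-\lambda I$ to a block-triangular matrix whose diagonal blocks are $A-\lambda I$ (of rank $n-1$, since $\lambda$ is simple for $A$) and a cycle block of rank $k-1$. A cleaner equivalent route is to solve $(B-\lambda I)\X=\0$ directly: the superdiagonal rows force $x_1=\cdots=x_k$ (in parts (2) and (3) the $\epsilon$-split row recombines to the same conclusion because all cycle coordinates are then equal), and substituting this common value as a first coordinate turns the remaining equations, coming from rows $k,k+1,\dots,k+n-1$, into $A\Y=\lambda\Y$ for $\Y=(x_k,x_{k+1},\dots,x_{k+n-1})^T$; simplicity of $\lambda$ for $A$ forces $\Y$ to be a multiple of $\U$, so the null space is one dimensional. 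With geometric multiplicity $1$ and $\Z^T\W>0$ (both vectors being positive), Lemma \ref{lem2.1} gives that $\lambda$ is simple, and Theorem \ref{thm1.1} gives that $B$ is algebraically positive.

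The main obstacle is bookkeeping rather than a single hard idea: the three constructions differ in where the cycle is split and in how the propagated values of $\Z$ change across the split, so the case analysis ($s\le j$ versus $s\ge j+2$, and the placement of $j$ and $s$ within the cycle) must be handled so that the superdiagonal propagation of the $\Z$-components stays consistent at the transition columns. The feature genuinely new to Theorem \ref{thm2.4}, absent from Theorem \ref{thm2.2}, is that the first row and first column of $A$ are merged into the cycle (the first row sits in row $k$ and the first column is split between columns $1$ and $j$ or $s$), so the reassembly identity that recovers $A\U=\lambda\U$ and the $A-\lambda I$ block must be tracked with care; once this is done, all three parts follow from the same verification.
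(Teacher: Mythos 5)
Your proposal is correct and takes essentially the same route as the paper: the paper proves Theorem \ref{thm2.4} only by appeal to ``similar arguments as in the proof of Theorem \ref{thm2.2}'', and that is precisely your template --- verify $B\W=\lambda\W$ and $\Z^TB=\lambda\Z^T$ via the identity $\sum_{i=1}^{n-1}a_{i+1,1}v_{i+1}=(\lambda-a_{11})v_1$, note positivity of $\W,\Z$, show $\rk(B-\lambda I)=k+n-2$ by column operations that telescope the cycle and reassemble the split first column of $A$ into an $A-\lambda I$ block, then apply Lemma \ref{lem2.1} and Theorem \ref{thm1.1}. Your alternative null-space argument (forcing $x_1=\cdots=x_k$ and reducing to $A\Y=\lambda\Y$) is just an equivalent way of establishing geometric multiplicity $1$, and your treatment of the ``Further'' inequalities (choosing $\epsilon$ small so that $\lambda/\epsilon-a_{11}>0$, resp.\ $\lambda-\epsilon a_{11}>0$) matches the paper's displayed computations.
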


\begin{remark}
Introducing the row and the column indices, we can write the matrices in Theorem \ref{thm2.4} as follows.
\begin{enumerate}
\item
$$\begin{blockarray}{ccccccccc}
\begin{block}{c[ccccc|ccc]}
_1&0&\lambda&0&\cdots&0&0&\cdots&0 \\
\vdots&\vdots&\ddots&\ddots&\ddots&\vdots&\vdots&&\vdots \\
\vdots&\vdots&&\ddots&\ddots&0&\vdots&&\vdots \\
\vdots&0&\cdots&\cdots&0&\lambda&0&\cdots&0 \\
_k&a_{11}&0&\cdots&\cdots&0&a_{12}&\cdots&a_{1n} \\
\cline{2-9}
_{k+1}&0&\cdots&a_{21}&\cdots&0&a_{22}&\cdots&a_{2n} \\
\vdots&\vdots&&\vdots&&\vdots&\vdots&&\vdots \\
_{k+n-1}&0&\cdots&a_{n1}&\cdots&0&a_{n2}&\cdots&a_{nn} \\
\end{block}\\[-4 mm]
&_1&\ldots&_j&\ldots&_k&_{k+1}&\ldots&_{k+n-1} \\
\end{blockarray}$$
\item If $s\leq j$, then we have $B$ as follows.
$$\begin{blockarray}{ccccccccccccc}
\begin{block}{c[ccccccccc|ccc]}
_1&0&\lambda&0&\cdots&\cdots&\cdots&\cdots&\cdots&0&0&\cdots&0 \\
\vdots&\vdots&\ddots&\ddots&\ddots&&&&&\vdots&\vdots&&\vdots \\
\vdots&\vdots&&\ddots&\ddots&\ddots&&&&\vdots&\vdots&&\vdots \\
_{j-1}&0&\cdots&\cdots&0&\lambda&\ddots&&&\vdots&\vdots&&\vdots \\
_j&0&\cdots&(1-\epsilon)\lambda&\cdots&0&\epsilon\lambda&\ddots&&\vdots&\vdots&&\vdots \\
_{j+1}&0&\cdots&\cdots&\cdots&\cdots&0&\lambda&\ddots&\vdots&\vdots& &\vdots \\
\vdots&\vdots&&&&&&\ddots&\ddots&0&\vdots&&\vdots \\
\vdots&0&\cdots&\cdots&\cdots&\cdots&\cdots&\cdots&0&\lambda&0&\cdots&0 \\
_k&a_{11}&0&\cdots&\cdots&\cdots&\cdots&\cdots&\cdots&0&a_{12}&\cdots&a_{1n} \\
\cline{2-13}
_{k+1}&0&\cdots&a_{21}&\cdots&\cdots&\cdots&\cdots&\cdots&0&a_{22}&\cdots&a_{2n} \\
\vdots&\vdots&&\vdots&&&&&&\vdots&\vdots&&\vdots \\
_{k+n-1}&0&\cdots&a_{n1}&\cdots&\cdots&\cdots&\cdots&\cdots&0&a_{n2}&\cdots&a_{nn} \\
\end{block}\\[-4 mm]
&_1&\ldots&_s&\ldots&\ldots&_{j+1}&\ldots&\ldots&_k&_{k+1}&\cdots&_{k+n-1} \\
\end{blockarray}$$
If $s\geq j+2$, then we have $B$ as follows.
$$\begin{blockarray}{cccccccccccccc}
\begin{block}{c[cccccccccc|ccc]}
_1&0&\lambda&0&\cdots&\cdots&\cdots&\cdots&\cdots&\cdots&0&0&\cdots&0 \\
\vdots&\vdots&\ddots&\ddots&\ddots&&&&&&\vdots&\vdots&&\vdots \\
_{j-1}&\vdots&&\ddots&\lambda&0&\cdots&\cdots&\cdots&\cdots&0& \vdots&&\vdots \\
_j&\vdots&&&\ddots&\epsilon\lambda&0&\cdots&(1-\epsilon)\lambda&\cdots&0&\vdots&&\vdots \\
_{j+1}&\vdots&&&&\ddots&\lambda&0&\cdots&\cdots&0&\vdots& &\vdots \\
\vdots&\vdots&&&&&\ddots&\ddots&\ddots&&\vdots&\vdots&&\vdots \\
\vdots&\vdots&&&&&&\ddots&\ddots&\ddots&\vdots&\vdots&&\vdots \\
\vdots&\vdots&&&&&&&\ddots&\ddots&0&\vdots&&\vdots \\
\vdots&0&\cdots&\cdots&\cdots&\cdots&\cdots&\cdots&\cdots&0&\lambda&0&\cdots&0 \\
_k&a_{11}&0&\cdots&\cdots&\cdots&\cdots&\cdots&\cdots&\cdots&0&a_{12}&\cdots&a_{1n} \\
\cline{2-14}
_{k+1}&0&\cdots&\cdots&\cdots&\cdots&\cdots&\cdots&a_{21}&\cdots&0&a_{22}&\cdots&a_{2n} \\
\vdots&\vdots&&&&&&&\vdots&&\vdots&\vdots&&\vdots \\
_{k+n-1}&0&\cdots&\cdots&\cdots&\cdots&\cdots&\cdots&a_{n1}&\cdots&0&a_{n2}&\cdots&a_{nn} \\
\end{block}\\[-3 mm]
&_1&\ldots&\ldots&\ldots&_{j+1}&\ldots&\ldots&_s&\ldots&_k&_{k+1}&\ldots&_{k+n-1} \\
\end{blockarray}$$
\item
$$\begin{blockarray}{cccccccccc}
\begin{block}{c[cccccc|ccc]}
_1&0&\lambda&0&\cdots&\cdots&0&0&\cdots&0 \\
\vdots&\vdots&\ddots&\ddots&\ddots&&\vdots&\vdots&&\vdots \\
\vdots&\vdots&&\ddots&\ddots&\ddots&\vdots&\vdots&&\vdots \\
\vdots&\vdots&&&\ddots&\ddots&0&\vdots&&\vdots \\
\vdots&0&\cdots&\cdots&\cdots&0&\lambda&0&\cdots&0 \\
_k&\epsilon a_{11}&0&\cdots&(1-\epsilon)a_{11}&\cdots&0&a_{12}&\cdots&a_{1n} \\
\cline{2-10}
_{k+1}&0&\cdots&\cdots&a_{21}&\cdots&0&a_{22}&\cdots&a_{2n} \\
\vdots&\vdots&&&\vdots&&\vdots&\vdots&&\vdots \\
_{k+n-1}&0&\cdots&\cdots&a_{n1}&\cdots&0&a_{n2}&\cdots&a_{nn} \\
\end{block}\\[-3 mm]
&_1&\ldots&\ldots&_s&\ldots&_k&_{k+1}&\ldots&_{k+n-1} \\
\end{blockarray}$$
\end{enumerate}
\end{remark}

\begin{theorem}\label{thm2.5}
Let $0<\epsilon<1$, and let $A$ be a real square matrix of order $n$ such that $a_{1j}>0$ for some $j\in\{1,2,\ldots,n\}$. Suppose that $\lambda>0$ is a simple eigenvalue of $A$ such that $A\U=\lambda\U$ and $\V^TA=\lambda \V^T$ for some $\U,\V>\0$. If $a_{1j}v_1+\sum\limits_{i=1}^{n-k}a_{k+i,j}v_{k+i}>0$ for some  $k\in\{1,2,\ldots,n\}$ $($for $k=n$, the hypothesis is $a_{1j}>0)$, then the following statements are true. 
\begin{enumerate}
\item For every $s\in\{1,2,\ldots,m\}$, $\lambda$ is a simple eigenvalue of $B$, where $B$ is of order $m+n$, given by
$$b_{pq}=\begin{cases}
\lambda,&\mbox{if }(p,q)\in\{(m,m+j)\}\cup\{(1,2),(2,3),\ldots,(m-1,m)\};\\
a_{1j},&\mbox{if }(p,q)=(m+1,1);\\
a_{p-m,j},&\mbox{if }p\in\{m+k+1,m+k+2,\ldots,m+n\}\mbox{ and }q=s;\\
a_{p-m,q-m},&\mbox{if }p,q\in\{m+1,m+2,\ldots,m+n\}\mbox{ such that }\\
&\hspace*{1 cm}(p,q)\notin\{(m+1,m+j)\cup\{(m+k+1,m+j),\ldots,(m+n,m+j)\}\};\\
0,&\mbox{otherwise}.
\end{cases}$$  
Moreover, a right eigenvector $\W$ and a left eigenvector $\Z$ of $B$ corresponding to $\lambda$ are given by:
$$w_i=\begin{cases}
u_j,&\mbox{if }1\leq i\leq m;\\
u_{i-m},&\mbox{if }m+1\leq i\leq m+n.
\end{cases}  \quad
z_i=\begin{cases}
\frac{a_{1j}v_1}{\lambda},&\mbox{if }1\leq i\leq s-1;\\
\frac{1}{\lambda}\left(a_{1j}v_1+\sum\limits_{i=1}^{n-k}a_{k+i,j}v_{k+i}\right),&\mbox{if }s\leq i\leq m;\\
v_{i-m},&\mbox{if }m+1\leq i\leq m+n.
\end{cases}$$
Consequently, $B$ is algebraically positive.
\item For every $s\in\{1,2,\ldots,m-1\}$ and $t\in\{1,2,\ldots,m\}\setminus\{s+1\}$, $\lambda$ is a simple eigenvalue of $B$, where $B$ is of order $m+n$, given by
$$b_{pq}=\begin{cases}
\lambda,&\mbox{if }(p,q)\in(\{(m,m+j)\}\cup\{(1,2),(2,3),\ldots,(m-1,m)\})\setminus\{(s,s+1)\};\\
\epsilon\lambda,&\mbox{if }(p,q)=(s,s+1);\\
a_{1j},&\mbox{if }(p,q)=(m+1,1);\\
(1-\epsilon)\lambda,&\mbox{if }(p,q)=(s,t);\\
a_{p-m,j},&\mbox{if }p\in\{m+k+1,m+k+2,\ldots,m+n\}\mbox{ and }q=t;\\
a_{p-m,q-m},&\mbox{if }p,q\in\{m+1,m+2,\ldots,m+n\}\mbox{ such that }\\
&\hspace*{1 cm}(p,q)\notin\{(m+1,m+j)\}\cup\{(m+k+1,m+j),\ldots,(m+n,m+j)\}\};\\
0,&\mbox{otherwise}.
\end{cases}$$  
If $t\leq s$, then a right eigenvector $\W$ and a left eigenvector $\Z$ of $B$ corresponding to $\lambda$ are given by:
$$w_i=\begin{cases}
u_j,&\mbox{if }1\leq i\leq m;\\
u_{i-m},&\mbox{if }m+1\leq i\leq m+n.
\end{cases} \quad 
z_i=\begin{cases}
\frac{a_{1j}v_1}{\lambda},&\mbox{if }1\leq i\leq t-1;\\
\frac{1}{\epsilon\lambda}\left(a_{1j}v_1+\sum\limits_{i=1}^{n-k}a_{k+i,j}v_{k+i}\right),&\mbox{if }t\leq i\leq s;\\
\frac{1}{\lambda}\left(a_{1j}v_1+\sum\limits_{i=1}^{n-k}a_{k+i,j}v_{k+i}\right),&\mbox{if }s+1\leq i\leq m;\\
v_{i-m},&\mbox{if }m+1\leq i\leq m+n.
\end{cases}$$
Consequently, in each case, $B$ is algebraically positive. Further, for some suitable $\epsilon$ satisfying $0<\epsilon<1$,
\begin{align*}
b_{st}z_s+\sum\limits_{i=1}^{n-k}b_{m+k+i,t}z_{m+k+i}&=\frac{1-\epsilon}{\epsilon}\left(a_{1j}v_1+\sum\limits_{i=1}^{n-k}a_{k+i,j}v_{k+i}\right)+\sum\limits_{i=1}^{n-k}a_{k+i,j}v_{k+i}\\&=\frac{1}{\epsilon}\left(a_{1j}v_1+\sum\limits_{i=1}^{n-k}a_{k+i,j}v_{k+i}\right)-a_{1j}v_1>0.
\end{align*}
If $t\geq s+2$, then a right eigenvector $\W$ and a left eigenvector $\Z$ of $B$ corresponding to $\lambda$ are given by:
$$w_i=\begin{cases}
u_j,&\mbox{if }1\leq i\leq m;\\
u_{i-m},&\mbox{if }m+1\leq i\leq m+n.
\end{cases} \quad
z_i=\begin{cases}
\frac{a_{1j}v_1}{\lambda},&\mbox{if }1\leq i\leq s;\\
\frac{\epsilon a_{1j}v_1}{\lambda},&\mbox{if }s+1\leq i\leq t-1;\\
\frac{1}{\lambda}\left(a_{1j}v_1+\sum\limits_{i=1}^{n-k}a_{k+i,j}v_{k+i}\right),&\mbox{if }t\leq i\leq m;\\
v_{i-m},&\mbox{if }m+1\leq i\leq m+n.
\end{cases}$$
Consequently, in each case, $B$ is algebraically positive. Further, for some suitable $\epsilon$ satisfying $0<\epsilon<1$,
$$b_{st}z_s+\sum\limits_{i=1}^{n-k}b_{m+k+i,t}z_{m+k+i}=(1-\epsilon)a_{1j}v_1+\sum\limits_{i=1}^{n-k}a_{k+i,j}v_{k+i}=\left(a_{1j}v_1+\sum\limits_{i=1}^{n-k}a_{k+i,j}v_{k+i}\right)-\epsilon a_{1j}v_1>0.$$
\item For every $t\in\{1,2,\ldots,m\}$, $\lambda$ is a simple eigenvalue of $B$, where $B$ is of order $m+n$, given by
$$b_{pq}=\begin{cases}
\lambda,&\mbox{if }(p,q)\in\{(1,2),(2,3),\ldots,(m-1,m)\};\\
\epsilon\lambda,&\mbox{if }(p,q)=(m,m+j);\\
a_{1j},&\mbox{if }(p,q)=(m+1,1);\\
(1-\epsilon)\lambda,&\mbox{if }(p,q)=(m,t);\\
a_{p-m,j},&\mbox{if }p\in\{m+k+1,m+k+2,\ldots,m+n\}\mbox{ and }q=t;\\
a_{p-m,q-m},&\mbox{if }p,q\in\{m+1,m+2,\ldots,m+n\}\mbox{ such that }\\
&\hspace*{1 cm}(p,q)\notin\{(m+1,m+j)\cup\{(m+k+1,m+j),\ldots,(m+n,m+j)\}\};\\
0,&\mbox{otherwise}.
\end{cases}$$  
Moreover, a right eigenvector $\W$ and a left eigenvector $\Z$ of $B$ corresponding to $\lambda$ are given by:
$$w_i=\begin{cases}
u_j,&\mbox{if }1\leq i\leq m;\\
u_{i-m},&\mbox{if }m+1\leq i\leq m+n.
\end{cases} \quad 
z_i=\begin{cases}
\frac{a_{1j}v_1}{\lambda},&\mbox{if }1\leq i\leq t-1;\\
\frac{1}{\epsilon\lambda}\left(a_{1j}v_1+\sum\limits_{i=1}^{n-k}a_{k+i,j}v_{k+i}\right),&\mbox{if }t\leq i\leq m;\\
v_{i-m},&\mbox{if }m+1\leq i\leq m+n.
\end{cases}$$
Consequently, in each case, $B$ is algebraically positive. Further, for some suitable $\epsilon$ satisfying $0<\epsilon<1$,
\begin{align*}
b_{mt}z_m+\sum\limits_{i=1}^{n-k}b_{m+k+i,t}z_{m+k+i}&=\frac{1-\epsilon}{\epsilon}\left(a_{1j}v_1+\sum\limits_{i=1}^{n-k}a_{k+i,j}v_{k+i}\right)+\sum\limits_{i=1}^{n-k}a_{k+i,j}v_{k+i}\\&=\frac{1}{\epsilon}\left(a_{1j}v_1+\sum\limits_{i=1}^{n-k}a_{k+i,j}v_{k+i}\right)-a_{1j}v_1>0.
\end{align*}
\end{enumerate}
\end{theorem}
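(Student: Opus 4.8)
The plan is to prove all three statements, together with their sub-cases, by the single template already used for Theorem~\ref{thm2.2}: write down the claimed $\W$ and $\Z$, check by direct computation that they are positive right and left eigenvectors for $\lambda$, show that the geometric multiplicity of $\lambda$ is~$1$, and then invoke Lemma~\ref{lem2.1} and Theorem~\ref{thm1.1}. It is convenient to write $\tilde\X=(x_{m+1},\ldots,x_{m+n})^T$ for the ``$A$-block'' of a vector $\X\in\mathbb{C}^{m+n}$ and to set $\sigma:=a_{1j}v_1+\sum_{i=1}^{n-k}a_{k+i,j}v_{k+i}$, so that the standing hypothesis is exactly $\sigma>0$.

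First I would verify $B\W=\lambda\W$. Since $w_i=u_j$ for every cycle index $1\le i\le m$ and $w_{m+i}=u_i$ on the $A$-block, each cycle row reads $\lambda w_{i+1}=\lambda u_j=\lambda w_i$ (with row $m$ routing to $m+j$, where again $w_{m+j}=u_j$), and in the $\epsilon$-cases the two outgoing weights $\epsilon\lambda$ and $(1-\epsilon)\lambda$ both land on coordinates of $\W$ equal to $u_j$, so they recombine to $\lambda u_j$. For an $A$-row the key observation is that every entry of column~$j$ of $A$ that has been redirected (to column $s$, $t$, or $1$) is moved onto a coordinate of $\W$ whose value is $u_j$; hence the row collapses to $a_{ij}u_j+\sum_{l\ne j}a_{il}u_l=(A\U)_i=\lambda u_i$ by $A\U=\lambda\U$. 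Next I would check $\Z^TB=\lambda\Z^T$ column by column. The cycle columns force $z_{q-1}=z_q$ along each arc, so $\Z$ is piecewise constant on the cycle, and the only nontrivial balances sit at the redirect/split target columns: there the incoming $A$-weights $\sum_{i=k+1}^{n}a_{ij}v_i$ and the back-arc weight $a_{1j}v_1$ assemble precisely into $\sigma$, which is why $\lambda z_q=\sigma$ (or $\sigma/\epsilon$) on the post-target segment and why $\Z>\0$ holds exactly when $\sigma>0$. The $1/\epsilon$ factors in the stated $\Z$ are forced by the split weight $(1-\epsilon)\lambda$, and each displayed identity is simply the column-$t$ balance with the cycle term stripped off; it equals $\sigma/\epsilon-a_{1j}v_1$, which tends to $+\infty$ as $\epsilon\to0^+$ and is therefore positive for suitable $\epsilon$.

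Then I would pin down the geometric multiplicity by solving $(B-\lambda I)\X=\0$ directly (equivalently, by the column reduction used for Theorem~\ref{thm2.2}). The cycle rows give $x_i=x_{i+1}$ along each unbroken arc, and the remaining cycle rows---row $m$, which feeds into $A$ at column $m+j$, together with the row carrying the split arc $(s,t)$ or $(m,t)$---force every cycle coordinate to equal $x_{m+j}$; here $0<\epsilon<1$ is exactly what makes the split cancel. Substituting $x_t=x_s=\cdots=x_{m+j}$ into the $A$-rows undoes every redirection, so these rows reduce to $(A-\lambda I)\tilde\X=\0$; since $\lambda$ is simple for $A$ we have $\rk(A-\lambda I)=n-1$ and $\tilde\X\parallel\U$, which determines $\X$ up to a scalar. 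Thus $\rk(B-\lambda I)=m+n-1$, the geometric multiplicity is $1$, and because $\W,\Z>\0$ we have $\Z^T\W>0$; Lemma~\ref{lem2.1} then makes $\lambda$ simple for $B$ and Theorem~\ref{thm1.1} makes $B$ algebraically positive.

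I expect the main obstacle to be the bookkeeping rather than any single difficult idea: the several index layouts (three parts, each with the $t\le s$ and $t\ge s+2$ cases) differ in which cycle arc is weakened, where the new arc points, and which rows of column~$j$ are redirected, and every one of them must be checked against the same three requirements. The genuinely delicate points are (i) confirming that the cycle collapses to $x_{m+j}$ in each sub-case, especially when the split arc points backward ($t\le s$) versus forward ($t\ge s+2$), and (ii) confirming that the piecewise-constant $\Z$ with its $1/\epsilon$ jumps satisfies every column balance while remaining strictly positive---this is precisely where choosing $\epsilon$ small, so that $\sigma/\epsilon$ dominates, is needed.
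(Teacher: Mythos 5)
Your proposal is correct and matches the paper's own proof, which establishes Theorem~\ref{thm2.5} by exactly the template you describe (the paper literally says it follows by ``applying similar arguments as in the proof of Theorem~\ref{thm2.2}''): verify the stated positive right and left eigenvectors entrywise, show the geometric multiplicity of $\lambda$ is one by reducing $B-\lambda I$ to the corresponding statement for $A-\lambda I$, and conclude via Lemma~\ref{lem2.1} and Theorem~\ref{thm1.1}. One trivial slip: the displayed quantity equals $\frac{1}{\epsilon}\left(a_{1j}v_1+\sum_{i=1}^{n-k}a_{k+i,j}v_{k+i}\right)-a_{1j}v_1$ only in the $t\leq s$ case of part~2 and in part~3, whereas for $t\geq s+2$ it is $\left(a_{1j}v_1+\sum_{i=1}^{n-k}a_{k+i,j}v_{k+i}\right)-\epsilon a_{1j}v_1$; in either form a sufficiently small $\epsilon$ makes it positive, so your argument is unaffected.
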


\begin{remark}
Introducing the row and the column indices, we can write the matrices in Theorem \ref{thm2.5} as follows.
\begin{enumerate}
\item 
$$\begin{blockarray}{ccccccccccccc}
\begin{block}{c[ccccc|ccccccc]}
_1& 0&\lambda&0&\cdots&0& 0&\cdots&\cdots&\cdots&\cdots&\cdots&0 \\
\vdots& \vdots&\ddots&\ddots&\ddots&\vdots& \vdots&&&&&&\vdots \\
\vdots& \vdots&&\ddots&\ddots&0& \vdots&&&&&&\vdots \\
\vdots& \vdots&&&\ddots&\lambda& 0&\cdots&\cdots&\cdots&\cdots&\cdots&0 \\
_m& 0&\cdots&\cdots&\cdots&0& 0&\cdots&0&\lambda&0&\cdots&0 \\
\cline{2-13}
_{m+1}& a_{1j}&0&\cdots&\cdots&0& a_{11}&\cdots&a_{1,j-1}&0&a_{1,j+1}&\cdots&a_{1n} \\
_{m+2}& 0&\cdots&\cdots&\cdots&0& a_{21}&\cdots&\cdots&a_{2j}&\cdots&\cdots&a_{2n} \\
\vdots& \vdots&&&&\vdots& \vdots&&&&&&\vdots \\
_{m+k}& 0&\cdots&\cdots&\cdots&0& a_{k1}&\cdots&\cdots&a_{kj}&\cdots&\cdots&a_{kn} \\
\cline{2-13}
_{m+k+1}& 0&\cdots&a_{k+1,j}&\cdots&0& a_{k+1,1}&\cdots&a_{k+1,j-1}&0&a_{k+1,j+1}&\cdots&a_{k+1,n} \\
\vdots& \vdots&&\vdots&&\vdots& \vdots&&\vdots&\vdots&\vdots&&\vdots \\
_{m+n}& 0&\cdots&a_{nj}&\cdots&0& a_{n1}&\cdots&a_{n,j-1}&0&a_{n,j+1}&\cdots&a_{nn} \\
\end{block}\\[-3 mm]
&_1&\ldots&_s&\ldots&_m&_{m+1}&\ldots&\ldots&_{m+j}&\ldots&\ldots&_{m+n} \\
\end{blockarray}$$
\item If $t\leq s$, then we have $B$ as follows.
$$\makeatletter\setlength\BA@colsep{2pt}\makeatother
\begin{blockarray}{ccccccccccccccccc}
\begin{block}{c[ccccccccc|ccccccc]}
_1&0&\lambda&0&\cdots&\cdots&\cdots&\cdots&\cdots&0&0&\cdots& \cdots&\cdots&\cdots&\cdots&0 \\
\vdots&\vdots&\ddots&\ddots&\ddots&&&&&\vdots&\vdots&&&&&&\vdots \\
\vdots&\vdots&&\ddots&\ddots&\ddots&&&&\vdots&\vdots&&&&&&\vdots \\
_{s-1}&0&\cdots&\cdots&0&\lambda&\ddots&&&\vdots&\vdots&&&&&&\vdots \\
_s&0&\cdots&(1-\epsilon)\lambda&\cdots&0&\epsilon\lambda&\ddots&&\vdots&\vdots&&&&&&\vdots \\
_{s+1}&0&\cdots&\cdots&\cdots&\cdots&0&\lambda&\ddots&\vdots& \vdots&&&&& &\vdots \\
\vdots&\vdots&&&&&&\ddots&\ddots&0&\vdots&&&&&&\vdots \\
\vdots&0&\cdots&\cdots&\cdots&\cdots&\cdots&\cdots&0&\lambda&0& \cdots&\cdots&\cdots&\cdots&\cdots&0 \\
_m& 0&\cdots&\cdots&\cdots&\cdots&\cdots&\cdots&\cdots&0& 0&\cdots&\cdots&\lambda&\cdots&\cdots&0 \\
\cline{2-17}
_{m+1}& a_{1j}&0&\cdots&\cdots&\cdots&\cdots&\cdots&\cdots&0& a_{11}&\cdots&a_{1,j-1}&0&a_{1,j+1}&\cdots&a_{1n} \\
_{m+2}& 0&\cdots&\cdots&\cdots&\cdots&\cdots&\cdots&\cdots&0& a_{21}&\cdots&\cdots&a_{2j}&\cdots&\cdots&a_{2n} \\
\vdots& \vdots&&&&&&&&\vdots& \vdots&&&&&&\vdots \\
_{m+k}& 0&\cdots&\cdots&\cdots&\cdots&\cdots&\cdots&\cdots&0& a_{k1}&\cdots&\cdots&a_{kj}&\cdots&\cdots&a_{kn} \\
\cline{2-17}
_{m+k+1}& 0&\cdots&a_{k+1,j}&\cdots&\cdots&\cdots&\cdots&\cdots&0& a_{k+1,1}&\cdots&a_{k+1,j-1}&0&a_{k+1,j+1}&\cdots&a_{k+1,n} \\
\vdots& \vdots&&\vdots&&&&&&\vdots& \vdots&&\vdots&\vdots&\vdots&&\vdots \\
_{m+n}& 0&\cdots&a_{nj}&\cdots&\cdots&\cdots&\cdots&\cdots&0& a_{n1}&\cdots&a_{n,j-1}&0&a_{n,j+1}&\cdots&a_{nn} \\
\end{block}\\[-3 mm]
&_1&\ldots&_t&\ldots&\ldots&_{s+1}&\cdots&\ldots&_m&_{m+1}&\ldots&\ldots&_{m+j}& \ldots&\ldots&_{m+n} \\
\end{blockarray}$$
If $t\geq s+2$, then we have $B$ as follows.
$$\makeatletter\setlength\BA@colsep{2pt}\makeatother
\begin{blockarray}{cccccccccccccccccc}
\begin{block}{c[cccccccccc|ccccccc]}
_1&0&\lambda&0&\cdots&\cdots&\cdots&\cdots&\cdots&\cdots&0&0&\cdots&\cdots&\cdots&\cdots&\cdots&0 \\
\vdots&\vdots&\ddots&\ddots&\ddots&&&&&&\vdots&\vdots&&&&&&\vdots \\
_{s-1}&\vdots&&\ddots&\lambda&0&\cdots&\cdots&\cdots&\cdots&0&\vdots& &&&&&\vdots \\
_s&\vdots&&&\ddots&\epsilon\lambda&0&\cdots&(1-\epsilon)\lambda&\cdots&0&\vdots&&&&&&\vdots \\
_{s+1}&\vdots&&&&\ddots&\lambda&0&\cdots&\cdots&0&\vdots&&&&&&\vdots \\
\vdots&\vdots&&&&&\ddots&\ddots&\ddots&&\vdots&\vdots&&&&&&\vdots \\
\vdots&\vdots&&&&&&\ddots&\ddots&\ddots&\vdots&\vdots&&&&&&\vdots \\
\vdots&\vdots&&&&&&&\ddots&\ddots&0&\vdots&&&&&&\vdots \\
\vdots&\vdots&&&&&&&&\ddots&\lambda&0 &\cdots&\cdots&\cdots&\cdots&\cdots&0 \\
_m&0&\cdots&\cdots&\cdots&\cdots&\cdots&\cdots&\cdots&\cdots&0&0& \cdots&\cdots&\lambda&\cdots&\cdots&0 \\
\cline{2-18}
_{m+1}& a_{1j}&0&\cdots&\cdots&\cdots&\cdots&\cdots&\cdots&\cdots&0& a_{11}&\cdots&a_{1,j-1}&0&a_{1,j+1}&\cdots&a_{1n} \\
_{m+2}& 0&\cdots&\cdots&\cdots&\cdots&\cdots&\cdots&\cdots&\cdots&0& a_{21}&\cdots&\cdots&a_{2j}&\cdots&\cdots&a_{2n} \\
\vdots& \vdots&&&&&&&&&\vdots& \vdots&&&&&&\vdots \\
_{m+k}& 0&\cdots&\cdots&\cdots&\cdots&\cdots&\cdots&\cdots&\cdots&0& a_{k1}&\cdots&\cdots&a_{kj}&\cdots&\cdots&a_{kn} \\
\cline{2-18}
_{m+k+1}& 0&\cdots&\cdots&\cdots&\cdots&\cdots&\cdots&a_{k+1,j}&\cdots&0& a_{k+1,1}&\cdots&a_{k+1,j-1}&0&a_{k+1,j+1}&\cdots&a_{k+1,n} \\
\vdots& \vdots&&&&&&&\vdots&&\vdots& \vdots&&\vdots&\vdots&\vdots&&\vdots \\
_{m+n}& 0&\cdots&\cdots&\cdots&\cdots&\cdots&\cdots&a_{nj}&\cdots&0& a_{n1}&\cdots&a_{n,j-1}&0&a_{n,j+1}&\cdots&a_{nn} \\
\end{block}\\[-3 mm]
&_1&\ldots&\ldots&\ldots&_{s+1}&\ldots&\ldots&_t&\ldots&_m&_{m+1}&\ldots &\ldots& _{m+j}&\ldots&\ldots&_{m+n} \\
\end{blockarray}$$
\item
$$\begin{blockarray}{ccccccccccccc}
\begin{block}{c[ccccc|ccccccc]}
_1&0&\lambda&0&\cdots&0&0&\cdots&\cdots&\cdots&\cdots&\cdots&0 \\
\vdots&\vdots&\ddots&\ddots&&\vdots&\vdots&&&&&&\vdots \\
\vdots&\vdots&\ddots&\ddots&\ddots&\vdots&\vdots&&&&&&\vdots \\
\vdots&\vdots&&\ddots&\ddots&0&\vdots&&&&&&\vdots \\
\vdots&0&\cdots&\cdots&0&\lambda&0&\cdots&\cdots&\cdots&\cdots &\cdots&0 \\
_m&0&\cdots&(1-\epsilon)\lambda &\cdots&0& 0&\cdots&\cdots&\epsilon\lambda&\cdots&\cdots&0 \\
\cline{2-13}
_{m+1}& a_{1j}&0&\cdots&\cdots&0& a_{11}&\cdots&a_{1,j-1}&0&a_{1,j+1}&\cdots&a_{1n} \\
_{m+2}& 0&\cdots&\cdots&\cdots&0& a_{21}&\cdots&\cdots&a_{2j}&\cdots&\cdots&a_{2n} \\
\vdots& \vdots&&&&\vdots& \vdots&&&&&&\vdots \\
_{m+k}& 0&\cdots&\cdots&\cdots&0& a_{k1}&\cdots&\cdots&a_{kj}&\cdots&\cdots&a_{kn} \\
\cline{2-13}
_{m+k+1}& 0&\cdots&a_{k+1,j}&\cdots&0& a_{k+1,1}&\cdots&a_{k+1,j-1}&0&a_{k+1,j+1}&\cdots&a_{k+1,n} \\
\vdots& \vdots&&\vdots&&\vdots& \vdots&&\vdots&\vdots&\vdots&&\vdots \\
_{m+n}& 0&\cdots&a_{nj}&\cdots&0& a_{n1}&\cdots&a_{n,j-1}&0&a_{n,j+1}&\cdots&a_{nn} \\
\end{block}\\[-3 mm]
&_1&\ldots&_t&\ldots&_m&_{m+1}&\ldots &\ldots& _{m+j}&\ldots&\ldots&_{m+n} \\
\end{blockarray}$$
\end{enumerate}
\end{remark}

\section{A sufficient condition for a sign pattern matrix to allow algebraic positivity}\label{sec3}
Let $A$ be a sign pattern matrix of order $n$. Let the matrices $A_+, A_-$ and $B_A$ be defined as follows:
$$(A_+)_{ij}=\begin{cases}
+, & \mbox{if }a_{ij}=+;\\
0, & \mbox{if }a_{ij}\neq+.
\end{cases} \qquad
(A_-)_{ij}=\begin{cases}
-, & \mbox{if }a_{ij}=-;\\
0, & \mbox{if }a_{ij}\neq-.
\end{cases}  \qquad
B_A=A_+-(A_-)^T.$$
We have the following necessary conditions on algebraic positivity associated with the allow problem.

\begin{theorem}[\cite{KQZ16}]\label{thm3.1}
Every algebraically positive matrix is irreducible.
\end{theorem}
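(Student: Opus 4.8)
The plan is to argue by contradiction, working directly from the polynomial definition of algebraic positivity rather than from the eigenvector criterion of Theorem \ref{thm1.1}. Suppose $A$ is algebraically positive but reducible. By algebraic positivity there is a real polynomial $f$ with $f(A)$ entrywise positive, and by reducibility there is a permutation matrix $P$ such that
$$P^TAP=\begin{pmatrix} B & C \\ 0 & D \end{pmatrix}$$
is block upper triangular, with square diagonal blocks $B$ and $D$ each of order at least one.

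First I would record the elementary fact that taking a polynomial commutes with permutation similarity: since $P^TP=I$, we have $(P^TAP)^m=P^TA^mP$ for every $m\geq0$, and hence $f(P^TAP)=P^Tf(A)P$ by taking the appropriate linear combination of these powers. Next I would observe that the set of block upper triangular matrices with a fixed block partition is closed under both addition and multiplication, so every power, and therefore every polynomial, of $P^TAP$ is again block upper triangular of the same shape; in particular the lower-left block of $f(P^TAP)$ is the zero matrix.

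Finally I would combine these two observations. The matrix $f(P^TAP)=P^Tf(A)P$ is obtained from the entrywise-positive matrix $f(A)$ by simultaneously permuting its rows and columns by $P$, so it is itself entrywise positive and has no zero entry whatsoever. This contradicts the vanishing of its lower-left block, which is a genuine (nonempty) block because both $B$ and $D$ have order at least one. Hence no such permutation $P$ can exist, and $A$ must be irreducible.

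I do not expect a serious obstacle here: the only points requiring any care are the commutation identity $f(P^TAP)=P^Tf(A)P$ and the remark that conjugation by a permutation matrix preserves the property of being entrywise positive, both of which are routine. The essential content is simply that a polynomial cannot turn a matrix with a zero off-diagonal block (after permutation) into a matrix with all entries strictly positive.
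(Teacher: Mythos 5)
Your proof is correct. Note, however, that there is nothing in this paper to compare it against: Theorem \ref{thm3.1} is quoted from \cite{KQZ16} as a known result, and no proof of it appears in the text. Your argument --- reducibility yields a permutation similarity $P^TAP$ in block upper triangular form; polynomials commute with permutation similarity, so $f(P^TAP)=P^Tf(A)P$; block upper triangular matrices with a fixed partition are closed under sums and products (and contain $I$, so the constant term is harmless), hence $f(P^TAP)$ keeps a zero lower-left block; but $P^Tf(A)P$ is entrywise positive, a contradiction --- is the standard direct argument from the definition, and it is fully rigorous as written. It also has the merit of being independent of the eigenvector characterization (Theorem \ref{thm1.1}), which is the other natural starting point but which does not give irreducibility any more quickly.
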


\begin{theorem}[\cite{KQZ16}]\label{thm3.2}
If a sign pattern matrix $A$ allows algebraic positivity, then either every row and column of $A$ contains a $+$, or every row and column of $A$ contains a $-$.
\end{theorem}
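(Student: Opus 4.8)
The plan is to exploit the characterization in Theorem~\ref{thm1.1} together with the positivity of the eigenvectors, reading off the required sign in each row and column directly from the two eigen-equations. Since $A$ allows algebraic positivity, I would fix a matrix $M \in Q(A)$ that is algebraically positive. By Theorem~\ref{thm1.1}, $M$ has a simple real eigenvalue $\lambda$ together with positive vectors $\U, \V > \0$ satisfying $M\U = \lambda\U$ and $\V^T M = \lambda\V^T$. The key structural observation is that a single $\lambda$ governs both equations, so whatever sign is forced will be forced uniformly across all rows (via $\U$) and all columns (via $\V$); this is precisely what yields a clean dichotomy rather than a mixture. I would then split on the sign of $\lambda$.

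Suppose first $\lambda > 0$. Writing out the $i$th coordinate of $M\U = \lambda\U$ gives $\sum_{j} m_{ij} u_j = \lambda u_i > 0$. As every $u_j > 0$, not all $m_{ij}$ can be $\le 0$, so $m_{ij} > 0$ for some $j$; because $M \in Q(A)$, this means $a_{ij} = +$, so row $i$ of $A$ contains a $+$. Running the same argument over all $i$ shows every row of $A$ contains a $+$, and applying it coordinatewise to $\V^T M = \lambda\V^T$ shows every column of $A$ contains a $+$. The case $\lambda < 0$ is the mirror image: either repeat the argument with the inequalities reversed (each relevant sum is now negative, forcing a negative entry), or observe that $-M \in Q(-A)$ is algebraically positive with eigenvalue $-\lambda > 0$ and apply the previous case to $-A$; either way every row and column of $A$ contains a $-$.

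The only delicate case is $\lambda = 0$, where the sums $\sum_j m_{ij} u_j$ and $\sum_i v_i m_{ij}$ vanish. Here I would invoke Theorem~\ref{thm3.1}: an algebraically positive matrix is irreducible, so (for $n \ge 2$) $M$ has no zero row and no zero column. A vanishing positively weighted combination of the entries of a nonzero row cannot have all of its nonzero entries of one sign, so each row contains both a positive and a negative entry, i.e.\ both a $+$ and a $-$; the same holds for columns. Thus every row and column contains a $+$ (and indeed also a $-$), so the asserted disjunction holds. I expect this $\lambda = 0$ boundary case to be the main obstacle, since it is the only point where the eigen-equations alone do not immediately pin down a sign and one must bring in irreducibility to exclude zero lines; the cases $\lambda \ne 0$ follow immediately from the positivity of $\U$ and $\V$.
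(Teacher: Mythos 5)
Your proposal is correct, but there is nothing in the paper to compare it against: Theorem~\ref{thm3.2} is imported from \cite{KQZ16} (as the citation in the theorem header indicates) and the paper gives no proof of it. Judged on its own, your argument is sound and complete. Theorem~\ref{thm1.1} supplies a simple real eigenvalue $\lambda$ with positive eigenvectors $\U,\V$; for $\lambda>0$ the identities $\sum_j m_{ij}u_j=\lambda u_i>0$ and $\sum_i v_im_{ij}=\lambda v_j>0$ force a positive entry, hence a $+$ of $A$, in every row and every column; for $\lambda<0$ the same sums are negative and force a $-$ everywhere; and your handling of $\lambda=0$ is the right supplement: there the eigen-equations only say that certain positively weighted row and column sums vanish, and invoking Theorem~\ref{thm3.1} to exclude zero rows and columns (for $n\ge2$) correctly yields that every row and column contains entries of \emph{both} signs, since a positively weighted combination of a nonzero list of reals, all of one sign, cannot vanish. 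This is essentially the standard argument (and in spirit the one in \cite{KQZ16}): the eigenvalue characterization does all the work, and no manipulation of the polynomial $f$ is needed; your use of Theorem~\ref{thm3.1} is not circular, as that result is established independently. One pedantic remark: at order $n=1$ the $\lambda=0$ case is genuinely degenerate --- the $1\times1$ zero pattern is algebraically positive under the paper's definition (take $f$ a positive constant), so the statement as quoted fails for it; this is a convention issue with the cited theorem itself, not a gap in your proof, which you sensibly restricted to $n\ge2$.
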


\begin{theorem}[\cite{AP19}]\label{thm3.3}
If a sign pattern matrix $A$ allows algebraic positivity, then $B_A$ is irreducible.
\end{theorem}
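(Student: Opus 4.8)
The plan is to argue by contraposition: assuming $B_A$ is reducible, I would show that \emph{no} matrix in $Q(A)$ can be algebraically positive, so $A$ cannot allow algebraic positivity. Concretely, suppose some $M\in Q(A)$ is algebraically positive. By Theorem \ref{thm1.1}, $M$ has a real eigenvalue $\lambda$ with strictly positive right and left eigenvectors, say $M\U=\lambda\U$ and $\V^TM=\lambda\V^T$ with $\U,\V>\0$. Since $B_A$ is a nonnegative sign pattern, its reducibility means that its digraph is not strongly connected, so there is a nonempty proper index set $S$ with no arc leaving $S$, i.e. $(B_A)_{ij}=0$ for all $i\in S$ and $j\notin S$.

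The first step is to translate this vanishing condition back into sign information about $M$. Because $(B_A)_{ij}=(A_+)_{ij}-(A_-)_{ji}$ and both contributing patterns take values in $\{0,+\}$, the equality $(B_A)_{ij}=0$ forces $a_{ij}\in\{0,-\}$ and $a_{ji}\in\{0,+\}$ for every $i\in S$, $j\notin S$. Grouping the rows and columns indexed by $S$ first, this says exactly that the off-diagonal block $M_{S,S^c}$ is entrywise nonpositive while $M_{S^c,S}$ is entrywise nonnegative. Here one must be careful with the transpose in the definition of $B_A$ and with the orientation of arcs, since this is precisely where the \emph{asymmetric} sign constraints (nonpositive one way, nonnegative the other) come from.

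The heart of the argument is a bilinear cancellation. Restricting $M\U=\lambda\U$ to the rows in $S$ and pairing with $\V_S$ gives $\V_S^TM_{SS}\U_S+\V_S^TM_{S,S^c}\U_{S^c}=\lambda\,\V_S^T\U_S$, while restricting $\V^TM=\lambda\V^T$ to the columns in $S$ and pairing with $\U_S$ gives $\V_S^TM_{SS}\U_S+\V_{S^c}^TM_{S^c,S}\U_S=\lambda\,\V_S^T\U_S$. Subtracting these two identities cancels both the $M_{SS}$ term and the $\lambda$ term, leaving $\V_S^TM_{S,S^c}\U_{S^c}=\V_{S^c}^TM_{S^c,S}\U_S$. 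The left-hand side is a sum of terms $v_i m_{ij}u_j$ with $v_i,u_j>0$ and $m_{ij}\le0$, hence is $\le0$; the right-hand side is a sum of terms with $m_{ji}\ge0$, hence is $\ge0$. Thus both sides vanish, and since every summand has a fixed sign, each entry of $M_{S,S^c}$ and of $M_{S^c,S}$ must be zero.

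This makes $M$ block diagonal with respect to the partition $(S,S^c)$, hence reducible, contradicting Theorem \ref{thm3.1}, which asserts that every algebraically positive matrix is irreducible. Therefore $B_A$ must be irreducible. I expect the main obstacle to be spotting the correct pairing of the two eigenvectors that produces a quantity forced to be simultaneously nonpositive and nonnegative; once the left and right eigen-equations are subtracted, the $\lambda$- and $M_{SS}$-contributions disappear for free and the rest is a pure positivity/sign count. It is worth noting that this argument uses neither the simplicity nor the positivity of $\lambda$, only the existence of strictly positive left and right eigenvectors furnished by Theorem \ref{thm1.1}.
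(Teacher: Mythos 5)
Your proof is correct. There is, however, nothing in the paper to compare it against: Theorem \ref{thm3.3} is stated with a citation to \cite{AP19} and the paper gives no proof of it, so your argument stands as a self-contained derivation from results the paper does prove or quote (Theorems \ref{thm1.1} and \ref{thm3.1}). Checking the details: since $(B_A)_{ij}$ is the sign-pattern sum of the two nonnegative quantities $(A_+)_{ij}$ and $-(A_-)_{ji}$, the vanishing $(B_A)_{ij}=0$ indeed forces $a_{ij}\in\{0,-\}$ and $a_{ji}\in\{0,+\}$, so for $M\in Q(A)$ the block $M_{S,S^c}$ is entrywise nonpositive and $M_{S^c,S}$ is entrywise nonnegative; your bilinear identity
$$\V_S^T M_{S,S^c}\U_{S^c}=\V_{S^c}^T M_{S^c,S}\U_S$$
follows exactly as you say by pairing the row equations over $S$ with $\V_S$ and the column equations over $S$ with $\U_S$ and subtracting, and since the left side is a sum of nonpositive terms and the right side a sum of nonnegative terms with strictly positive weights $v_iu_j$, every entry of both off-diagonal blocks vanishes, making $M$ reducible and contradicting Theorem \ref{thm3.1}. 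Your closing observation is also accurate and worth keeping: the argument uses only the existence of a common real eigenvalue with strictly positive left and right eigenvectors, not its simplicity or positivity, so it proves the slightly stronger statement that no matrix with such an eigenpair can have $B$-pattern reducible in this sense.
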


We say that an irreducible sign pattern matrix $A$ is AP-irreducible if every row and column of $A$ contains a $+$ and $B_A$ is irreducible. We say that a sign pattern matrix $A$ is minimally AP-irreducible if it does not remain AP-irreducible after replacement of a nonzero entry with 0. 

Theorem \ref{thm3.1}, \ref{thm3.2} and \ref{thm3.3} together imply that if $A$ is algebraically positive, then either $A$ or $-A$ is AP-irreducible. Whether the converse is true? We do not know the answer yet, but we give some results which indicate that the converse may be true.

We will present a class of AP-irreducible sign pattern matrices through Theorem \ref{thm3.16} that allow algebraic positivity. On that purpose, we give the following results as prerequisites.

\begin{lemma}[\cite{KQZ16}]\label{lem3.5}
If $A$ is algebraically positive, then for every permutation matrix $P$, $P^TAP$ is algebraically positive.
\end{lemma}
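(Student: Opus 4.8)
The plan is to use the very same polynomial $f$ that witnesses the algebraic positivity of $A$, exploiting the fact that a permutation matrix is orthogonal, so $P^{T}=P^{-1}$ and $P^{T}AP$ is similar to $A$ via $P$.

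First I would invoke the elementary identity that polynomial evaluation commutes with conjugation: for any invertible matrix $S$ and any polynomial $f$, one has $f(S^{-1}AS)=S^{-1}f(A)S$. This holds because $(S^{-1}AS)^{k}=S^{-1}A^{k}S$ for every integer $k\geq 0$, and $f$ is a linear combination of such powers together with a scalar multiple of $I$ (with $S^{-1}IS=I$). Specializing to $S=P$ and using $P^{-1}=P^{T}$ gives $f(P^{T}AP)=P^{T}f(A)P$.

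Next, since $A$ is algebraically positive, by definition there is a real polynomial $f$ for which $f(A)$ is a positive matrix. The remaining point is that conjugating a positive matrix by a permutation simply permutes its rows and columns: if $\sigma$ denotes the permutation associated with $P$, then the $(i,j)$ entry of $P^{T}f(A)P$ equals the $(\sigma(i),\sigma(j))$ entry of $f(A)$. As every entry of $f(A)$ is positive, so is every entry of $P^{T}f(A)P$. Combining this with the previous display, $f(P^{T}AP)=P^{T}f(A)P$ is a positive matrix, and hence the same polynomial $f$ certifies that $P^{T}AP$ is algebraically positive.

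There is essentially no genuine obstacle in this argument; it is a routine verification. The only step deserving a moment of care is confirming that a permutation similarity preserves entrywise positivity. This is special to permutation matrices: a general similarity $S^{-1}MS$ need not be entrywise positive even when $M$ is, but because $P$ merely relabels coordinates, the set of entries of $f(A)$ is preserved (only their positions change), so positivity is retained. This is immediate from the row–column permutation description above.
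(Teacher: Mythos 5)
Your proof is correct. The paper offers no proof of this lemma at all---it is quoted verbatim from \cite{KQZ16}---and your argument (polynomial evaluation commutes with similarity, $P^{-1}=P^{T}$ for permutation matrices, and permutation similarity merely permutes entries and hence preserves entrywise positivity) is the standard one, so the same polynomial $f$ indeed witnesses algebraic positivity of $P^{T}AP$.
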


If a sign pattern matrix requires some property, then it also allows that property. So we have the following result from \cite[Lemma 6]{DB19}.

\begin{lemma}[\cite{DB19}]\label{lem3.6}
If all nonzero off-diagonal entries of an irreducible sign pattern matrix are the same, which is $+$ or $-$, then that sign pattern matrix allows algebraic positivity.
\end{lemma}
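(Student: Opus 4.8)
The plan is to prove the stronger statement that such a sign pattern \emph{requires} algebraic positivity; since requiring a property forces allowing it (as the paragraph preceding the lemma notes), the result follows immediately. So I would fix an arbitrary real matrix $M\in Q(A)$ and show directly that $M$ is algebraically positive by verifying the criterion of Theorem~\ref{thm1.1}.

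Consider first the case in which every nonzero off-diagonal entry of $A$ is $+$. Then every $M\in Q(A)$ has nonnegative off-diagonal entries, and since irreducibility is governed solely by the off-diagonal zero/nonzero pattern, each such $M$ is irreducible. I would choose a scalar $c>0$ large enough that $c+m_{ii}\geq 0$ for all $i$ and set $N=M+cI$. Then $N$ is a nonnegative matrix, and because adding $cI$ alters only the diagonal, $N$ inherits exactly the off-diagonal pattern of $M$ and is therefore irreducible as well. By the Perron--Frobenius theorem for irreducible nonnegative matrices, $N$ has a simple, positive eigenvalue $\rho$ (its spectral radius) admitting strictly positive right and left eigenvectors $\X,\Y$. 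Since $N=M+cI$, the scalar $\rho-c$ is a simple real eigenvalue of $M$ with the same strictly positive right and left eigenvectors $\X$ and $\Y$. Theorem~\ref{thm1.1} then certifies that $M$ is algebraically positive.

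For the case in which every nonzero off-diagonal entry of $A$ is $-$, I would reduce to the previous case by negation. Given $M\in Q(A)$, the matrix $-M$ has nonnegative off-diagonal entries and the same strongly connected digraph, so the argument above produces a simple real eigenvalue $\mu$ of $-M$ with strictly positive right and left eigenvectors $\X,\Y$. The key observation is that Theorem~\ref{thm1.1} requires only a simple \emph{real} eigenvalue with positive eigenvectors and imposes no sign condition on the eigenvalue itself. Negation sends $(\mu,\X,\Y)$ to $(-\mu,\X,\Y)$, so $-\mu$ is a simple real eigenvalue of $M$ with the same positive right and left eigenvectors, and Theorem~\ref{thm1.1} again certifies that $M$ is algebraically positive.

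Since in both cases every matrix of $Q(A)$ is algebraically positive, $A$ requires, and hence allows, algebraic positivity. The only step demanding genuine care is the negation argument in the $-$ case: it works precisely because Theorem~\ref{thm1.1} places no positivity constraint on the eigenvalue, so the sign flip of the Perron root is harmless while the positivity of the eigenvectors is preserved. The $+$ case is a routine shifted Perron--Frobenius argument and poses no real obstacle.
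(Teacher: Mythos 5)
Your proof is correct. Note, however, that the paper does not actually prove this lemma: it simply observes that \enquote{requires} implies \enquote{allows} and then cites it as Lemma~6 of \cite{DB19}, which is precisely the stronger \enquote{requires algebraic positivity} statement you establish. So your proposal fills in, self-containedly, the argument the paper outsources to the reference, and it follows the natural route: shift by $cI$ to get an irreducible nonnegative matrix, invoke Perron--Frobenius to obtain a simple eigenvalue with positive left and right eigenvectors, undo the shift, and apply Theorem~\ref{thm1.1}. Your handling of the all-$-$ case is the one place requiring care, and you treat it correctly: since Theorem~\ref{thm1.1} imposes no sign condition on the eigenvalue, negation sends the Perron triple $(\mu,\X,\Y)$ of $-M$ to the triple $(-\mu,\X,\Y)$ for $M$ without harm. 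Two minor points worth making explicit if this were written out in full: irreducibility of $M$ and of $M+cI$ follows because strong connectivity of the digraph is unaffected by diagonal entries (loops), which is exactly why the arbitrary signs permitted on the diagonal of $A$ cause no trouble; and simplicity is preserved under the shift $N\mapsto N-cI$ and under negation because the characteristic polynomial merely undergoes a change of variable. With those observations your argument is complete and proves the stronger \enquote{requires} form, just as the cited source does.
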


\begin{theorem}\label{thm3.7}
If a sign pattern matrix $A$ allows algebraic positivity, then every super-pattern of $A$ allows algebraic positivity.
\end{theorem}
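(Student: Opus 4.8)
The plan is to use the eigenvalue characterization of Theorem~\ref{thm1.1} together with a continuity (perturbation) argument, rather than trying to track a single polynomial $f$ through the enlargement of the support. Recall that a super-pattern $C$ of $A$ is obtained by turning some zero entries of $A$ into $+$ or $-$ while leaving every nonzero entry of $A$ unchanged. So I would fix a matrix $M\in Q(A)$ that is algebraically positive; by Theorem~\ref{thm1.1}, $M$ has a simple real eigenvalue $\lambda$ together with positive right and left eigenvectors, say $M\U=\lambda\U$ and $\V^TM=\lambda\V^T$ with $\U,\V>\0$. The goal is then to produce a matrix in $Q(C)$ that still satisfies the hypotheses of Theorem~\ref{thm1.1}.

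To that end I would build a real matrix $N$ supported exactly on the set $S$ of positions that are nonzero in $C$ but zero in $A$, choosing $N_{pq}$ to be $+1$ if $c_{pq}=+$, to be $-1$ if $c_{pq}=-$, and to be $0$ otherwise. For a real parameter $t>0$ set $M(t)=M+tN$. A direct check of signs shows $M(t)\in Q(C)$ for every $t>0$: on positions where $A$ is already nonzero, $N$ vanishes and $M(t)$ inherits the sign prescribed by $A=C$ there; on positions in $S$, the entry $tN_{pq}$ carries exactly the sign $c_{pq}$; and on positions where $C$ is zero (hence $A$ is zero), both $M$ and $N$ vanish. Thus the whole family $\{M(t):t>0\}$ lies in $Q(C)$, and $M(0)=M$.

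The heart of the matter is a perturbation argument applied at $t=0$. Because $\lambda$ is a simple eigenvalue of the real matrix $M(0)=M$, standard perturbation theory for simple eigenvalues furnishes a value $t_0>0$ and continuous functions $t\mapsto\lambda(t)$, $t\mapsto\U(t)$, $t\mapsto\V(t)$ on $[0,t_0)$ with $\lambda(0)=\lambda$, $\U(0)=\U$, $\V(0)=\V$, such that $\lambda(t)$ is a simple eigenvalue of $M(t)$ with $M(t)\U(t)=\lambda(t)\U(t)$ and $\V(t)^TM(t)=\lambda(t)\V(t)^T$. Two observations then close the argument. Since a simple root of the real characteristic polynomial that is real at $t=0$ cannot leave the real axis without pairing with a conjugate, $\lambda(t)$ remains real for small $t$. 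Since $\U(0),\V(0)>\0$ and strict positivity of a vector is an open condition, $\U(t),\V(t)>\0$ for all sufficiently small $t$. Choosing any such $t\in(0,t_0)$, the matrix $M(t)\in Q(C)$ has a simple real eigenvalue with positive left and right eigenvectors, so by Theorem~\ref{thm1.1} it is algebraically positive; hence $C$ allows algebraic positivity.

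The one step I expect to be a genuine obstacle is the justification that $\lambda(t)$, $\U(t)$, and $\V(t)$ can be chosen to vary continuously (indeed analytically) with $t$ and that $\lambda(t)$ stays real and simple. This is precisely the classical statement that a simple eigenvalue of a matrix, together with an associated eigenvector, depends analytically on the matrix entries, so the difficulty lies only in citing it cleanly rather than in any actual computation. Everything else—the sign bookkeeping placing $M(t)$ in $Q(C)$ and the openness of the positivity conditions—is routine, and no reduction to a one-entry change is needed since $N$ already activates all of the new positions at once.
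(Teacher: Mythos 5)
Your proof is correct, but it certifies the perturbation by a different mechanism than the paper does. Both arguments build the same one-parameter family: the paper takes $\tilde{A}\in Q(A)$ algebraically positive, defines $B\in Q(X)$ with entries $\pm1$ on the support of the super-pattern $X$, and considers $\tilde{A}+\epsilon B$, exactly as your $M(t)=M+tN$ (the paper lets $B$ carry the signs of \emph{all} nonzero entries of $X$ rather than only the newly activated ones, but since $\tilde A+\epsilon B\in Q(X)$ either way this is immaterial). The difference is in how one sees that the perturbed matrix is still algebraically positive. The paper never touches eigenvalues: it fixes a polynomial $p$ with $p(\tilde{A})>0$ entrywise, expands $p(\tilde{A}+\epsilon B)=p(\tilde{A})+\epsilon\cdot q(\tilde{A},B,\epsilon)$, and invokes only the openness of entrywise positivity to conclude $p(\tilde{A}+\epsilon B)>0$ for small $\epsilon>0$; the \emph{same} polynomial works for the perturbed matrix. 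You instead pass through Theorem \ref{thm1.1} in both directions and invoke analytic perturbation of a simple eigenvalue, plus the conjugate-pairing argument for reality and openness of vector positivity. Both are sound; the step you flagged as the obstacle (continuous/analytic dependence of a simple eigenvalue and its eigenvectors, with simplicity and reality preserved) is indeed classical and can be cited. The trade-off: the paper's route is more elementary and self-contained, needing nothing beyond continuity of polynomials in the matrix entries, while your route gives slightly more for free --- the perturbed matrix retains a simple real eigenvalue near $\lambda$ with positive left and right eigenvectors, so in particular positivity of the eigenvalue is preserved, which is the strengthened form of the conclusion that the paper has to re-derive by separate structural arguments when it needs it later (e.g., in Lemma \ref{lem3.12}).
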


\begin{proof}
Let $A$ be a sign pattern matrix and $\tilde{A}\in Q(A)$ be algebraically positive. Then $p(\tilde{A})$ is a positive matrix for some real polynomial $p$.

Let $X$ be a super-pattern of $A$, and $B\in Q(X)$ be defined as follows:
$$b_{ij}=\begin{cases}
1,&\mbox{if }x_{ij}=+; \\
-1,&\mbox{if }x_{ij}=-; \\
0,&\mbox{if }x_{ij}=0.
\end{cases}$$
For every $\epsilon>0$, $p(\tilde{A}+\epsilon B)=p(\tilde{A})+\epsilon\cdot q(\tilde{A},B,\epsilon)$, where $q(\tilde{A},B,\epsilon)$ is a polynomial in $\tilde{A},B$ and $\epsilon$. Since $p(\tilde{A})$ is a positive matrix, there exists $\epsilon>0$ such that $p(\tilde{A}+\epsilon B)$ is also a positive matrix. Since $X$ is a super-pattern of $A$, $\tilde{A}+\epsilon B\in Q(X)$. Therefore $X$ allows algebraic positivity.
\end{proof}

\begin{lemma}\label{lem3.8}
Let $D$ be a minimally strongly connected digraph. For every $v\in V(D)$, there exists a nested sequence $V_1,V_2,\ldots,V_k$ of vertex sets such that $v\in V_1\subsetneq V_2\subsetneq \cdots\subsetneq V_k=V(D)$ and $V_1,V_2,\ldots,V_k$ satisfies the following properties:
\begin{enumerate}
\item[$\mathrm{P1}$.] The subgraph of $D$ induced by $V_1$ is a directed cycle.
\item[$\mathrm{P2}$.] For every $i\in\{2,3,\ldots,k\}$, the subgraph of $D$ induced by $V_i$ is strongly connected. 
\item[$\mathrm{P3}$.] For every $i\in\{2,3,\ldots,k\}$, there is no strongly connected subgraph of $D$ induced by some $V$ satisfying $V_{i-1}\subsetneq V\subsetneq V_i$.
\end{enumerate}
\end{lemma}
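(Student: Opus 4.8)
The plan is to build the chain in two stages: first produce a base set $V_1$ whose induced subgraph is a directed cycle through $v$, and then extend it greedily up to $V(D)$ by repeatedly adjoining a minimal strongly connected piece. The only place where minimal strong connectivity is genuinely needed is in securing property $\mathrm{P1}$; the extension step works in any strongly connected digraph.

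To obtain $V_1$, I would first record an auxiliary fact: in a minimally strongly connected digraph every directed cycle is chordless, meaning that if $C\colon v_{i_1}\to v_{i_2}\to\cdots\to v_{i_m}\to v_{i_1}$ is a directed cycle, then $D$ has no arc $(\overrightarrow{v_{i_a},v_{i_b}})$ besides the arcs of $C$ itself. Indeed, suppose such a chord $e=(\overrightarrow{v_{i_a},v_{i_b}})$ existed, with $v_{i_b}$ not the cycle-successor of $v_{i_a}$. The portion of $C$ running from $v_{i_a}$ to $v_{i_b}$ is a directed path of length at least $2$ that avoids $e$, so $v_{i_b}$ is still reachable from $v_{i_a}$ in $D-e$; hence any directed path using $e$ can be rerouted along this cycle-path, showing $D-e$ is still strongly connected and contradicting minimality. (The same rerouting argument rules out loops when $|V(D)|\geq 2$.) Since $D$ is strongly connected, $v$ lies on a directed cycle $C$: choose an out-neighbour $w$ of $v$ and a shortest, hence simple, directed path from $w$ back to $v$, and prepend the arc $(\overrightarrow{v,w})$. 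Setting $V_1=V(C)$ and invoking the chordless fact, the subgraph induced by $V_1$ is exactly $C$, which establishes $\mathrm{P1}$.

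For the extension, suppose $V_1,\ldots,V_{i-1}$ have already been constructed with $D[V_{i-1}]$ strongly connected and $V_{i-1}\neq V(D)$. Consider the family $\mathcal{F}$ of all vertex sets $V$ with $V_{i-1}\subsetneq V\subseteq V(D)$ for which $D[V]$ is strongly connected. This family is nonempty, since $V(D)\in\mathcal{F}$ because $D$ itself is strongly connected, and it is finite, so it contains a set $V_i$ that is minimal under inclusion. Then $D[V_i]$ is strongly connected, which is $\mathrm{P2}$, and the minimality of $V_i$ in $\mathcal{F}$ means that no $V$ with $V_{i-1}\subsetneq V\subsetneq V_i$ induces a strongly connected subgraph, which is $\mathrm{P3}$. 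Because $|V_i|>|V_{i-1}|$ at each step and $V(D)$ is finite, the process terminates, and it can stop only when the current set equals $V(D)$; relabelling that final set as $V_k$ yields the required nested sequence $v\in V_1\subsetneq\cdots\subsetneq V_k=V(D)$.

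I expect the chordless-cycle fact behind $\mathrm{P1}$ to be the main obstacle. The greedy extension is essentially bookkeeping and uses nothing more than finiteness of the poset of strongly connected induced subgraphs, whereas $\mathrm{P1}$ fails outright without minimal strong connectivity, since a cycle carrying a chord induces a strongly connected subgraph that is not a cycle. The crux is therefore to exploit that every arc of $D$ is essential, which is exactly what the rerouting argument does.
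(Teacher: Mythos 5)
Your proof is correct, but your extension step takes a genuinely different route from the paper's. For $\mathrm{P1}$ you do the same thing the paper does (take a directed cycle through $v$ and invoke minimality to conclude the induced subgraph is exactly that cycle), except that you actually prove the chordless-cycle fact via the rerouting argument, which the paper merely asserts. For the extension, however, the paper argues constructively: it picks an arc $(\overrightarrow{x,y})$ leaving $V_{i-1}$, follows a directed path from $y$ back to $V_{i-1}$ avoiding $V_{i-1}$ internally, and sets $V_i$ to be $V_{i-1}$ together with this \enquote{ear}, asserting $\mathrm{P3}$ from minimal strong connectivity of $D$. You instead choose $V_i$ as an inclusion-minimal member of the finite nonempty family of strongly connected induced supersets of $V_{i-1}$, so $\mathrm{P2}$ and $\mathrm{P3}$ hold by fiat; as you correctly observe, this step needs no minimality hypothesis on $D$ at all. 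Your version is cleaner and more watertight on $\mathrm{P3}$ (the paper's one-line justification there actually hides a nontrivial argument about which arcs can exist between the ear and $V_{i-1}$). What the paper's constructive proof buys in exchange is the explicit ear structure: $V_i\setminus V_{i-1}$ is a directed path joined to $V_{i-1}$ by exactly two arcs. That structure is recorded in Remark \ref{rem3.9} and is what Lemma \ref{lem3.10} uses to count arcs; your abstract selection of $V_i$ establishes Lemma \ref{lem3.8} as stated but would leave Remark \ref{rem3.9} in need of a separate argument.
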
 

\begin{proof}
We prove the result using the following constructive method.
\begin{enumerate}
\item[Step 1.] Let $v\in V(D)$. Since $D$ is strongly connected, there exists a directed cycle containing $v$. Denote the vertex set of that directed cycle by $V_1$. Since $D$ is minimally strongly connected, the subgraph of $D$ induced by $V_1$ is that directed cycle.
\item[Step 2.] Suppose that $V_1,V_2,\ldots,V_{i-1}$ are obtained as required. If $V_{i-1}=V(D)$, then we are done. Otherwise, go to Step 3.
\item[Step 3.] If $V_{i-1}\subsetneq V(D)$, then there exists $x\in V_{i-1}$ such that $(\overrightarrow{x,y})$ is an arc of $D$ for some $y\in V(D)\setminus V_{i-1}$. Since $D$ is strongly connected, consider a directed path in $D$ that starts from $y$ and ends at some vertex of $V_{i-1}$, say $z$, containing no vertex from $V_{i-1}$ except $z$. Let $V(y\rightarrow z)$ be the set of vertices on that directed path. If $V_i=V_{i-1}\cup V(y\rightarrow z)$, then the subgraph of $D$ induced by $V_i$ is strongly connected. Since $D$ is minimally strongly connected, there is no strongly connected subgraph of $D$ induced by some $V$ satisfying $V_{i-1}\subsetneq V\subsetneq V_i$. Now go to Step 2 with $V_i$.
\end{enumerate}
Since $V(D)$ is finite, we get a positive integer $k$ such that $V_k=V(D)$.
\end{proof}

\begin{remark}\label{rem3.9}
For every $i\in\{2,3,\ldots,k\}$, there exist precisely two arcs such that one of them has the initial vertex (say, $p_{i-1}$) in $V_{i-1}$ and the terminal vertex in $V_i\setminus V_{i-1}$, and the other arc has the initial vertex in $V_i\setminus V_{i-1}$ and the terminal vertex (say, $q_{i-1}$) in $V_{i-1}$. If $V_i\setminus V_{i-1}=\{v_{i,1},v_{i,2},\ldots,v_{i,n_i}\}$, then without loss of generality, we may assume that $\{(\overrightarrow{p_{i-1},v_{i,1}}),(\overrightarrow{v_{i,n_i},q_{i-1}})\}\cup\{(\overrightarrow{v_{i,1},v_{i,2}}),\ldots,(\overrightarrow{v_{i,n_i-1},v_{i,n_i}})\}$ is the set of arcs of the subgraph of $D$ induced by $V_i$ such that none of them is an arc of the subgraph of $D$ induced by $V_{i-1}$.
\end{remark}

Let us recall that a matrix is minimally irreducible if it does not remain irreducible after replacement of a nonzero entry with 0.

\begin{lemma}\label{lem3.10}
A minimally irreducible matrix of order $n$ can have at most $2n-2$ nonzero entries.
\end{lemma}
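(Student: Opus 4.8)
The plan is to translate the statement into the language of digraphs and then apply Lemma \ref{lem3.8} together with Remark \ref{rem3.9}. First I would observe that for a matrix $A$ of order $n\geq 2$, an arc $(\overrightarrow{i,j})$ of $D(A)$ with $i=j$ is a loop, and deleting a loop never affects strong connectivity; hence if $A$ is minimally irreducible, every diagonal entry must already be $0$, for otherwise replacing such an entry by $0$ would leave $A$ irreducible, contradicting minimality. Consequently the nonzero entries of $A$ are in bijection with the arcs of the loopless digraph $D(A)$, and minimality of irreducibility says exactly that deleting any arc destroys strong connectivity, i.e. $D(A)$ is minimally strongly connected. So it suffices to show that a loopless minimally strongly connected digraph on $n$ vertices has at most $2n-2$ arcs. (The case $n=1$ is trivial, since then $D(A)$ has no arcs and $0\leq 2n-2=0$.)

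Next I would fix a vertex $v$ and invoke Lemma \ref{lem3.8} to obtain a nested chain $V_1\subsetneq V_2\subsetneq\cdots\subsetneq V_k=V(D)$, counting the arcs of $D$ by accumulating them along this chain. By property $\mathrm{P1}$, the subgraph induced by $V_1$ is a directed cycle, which has exactly $|V_1|$ arcs; moreover $|V_1|\geq 2$ because $D$ is loopless. For each $i\in\{2,\ldots,k\}$, Remark \ref{rem3.9} describes precisely the arcs of the subgraph induced by $V_i$ that are not already arcs of the subgraph induced by $V_{i-1}$: writing $n_i=|V_i\setminus V_{i-1}|$, these are the $n_i+1$ arcs of the path $(\overrightarrow{p_{i-1},v_{i,1}}),(\overrightarrow{v_{i,1},v_{i,2}}),\ldots,(\overrightarrow{v_{i,n_i-1},v_{i,n_i}}),(\overrightarrow{v_{i,n_i},q_{i-1}})$. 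Since $V_k=V(D)$, summing these contributions counts every arc of $D$ exactly once, giving
$$|E(D)|=|V_1|+\sum_{i=2}^{k}(n_i+1)=|V_1|+\Big(\sum_{i=2}^{k}n_i\Big)+(k-1)=|V_1|+(n-|V_1|)+(k-1)=n+k-1,$$
where I used $\sum_{i=2}^{k}n_i=|V_k|-|V_1|=n-|V_1|$.

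Finally I would bound $k$. Since $|V_1|\geq 2$ and each $n_i\geq 1$, the identity $n=|V_1|+\sum_{i=2}^{k}n_i\geq 2+(k-1)$ yields $k\leq n-1$. Substituting into the arc count gives $|E(D)|=n+k-1\leq 2n-2$, which is the desired bound on the number of nonzero entries of $A$.

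The main obstacle is the second step: one must be certain that the arcs supplied by Remark \ref{rem3.9} at each stage are genuinely \emph{all} of the new arcs of $D$ and that no arc is counted twice, so that the displayed sum is an exact identity rather than merely an inequality. This rests on the minimality hypothesis, which forces exactly two crossing arcs between $V_{i-1}$ and $V_i\setminus V_{i-1}$ and prevents any additional chords inside the subgraph induced by $V_i$; I would justify this by noting that any such extra arc could be deleted without breaking strong connectivity, contradicting minimal strong connectivity. The loop-elimination reduction of the first step and the bookkeeping $\sum_{i=2}^{k}n_i=n-|V_1|$ are routine by comparison.
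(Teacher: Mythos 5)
Your proof is correct and follows essentially the same route as the paper: translate minimal irreducibility into minimal strong connectivity of the loopless digraph $D(A)$, apply Lemma \ref{lem3.8} to get the nested chain, use Remark \ref{rem3.9} to count exactly $n_i+1$ new arcs at each stage, and combine $|V_1|\geq 2$ with $k\leq n-1$ to obtain the bound $n+k-1\leq 2n-2$. The only cosmetic difference is that you derive looplessness from minimality of the matrix directly, while the paper attributes it to minimal strong connectivity of $D(A)$; these are the same observation.
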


\begin{proof}
Let $A$ be a minimally irreducible matrix of order $n$. Then $D(A)$ is minimally strongly connected. So there exists a nested sequence $V_1,V_2,\ldots,V_k$ of vertex sets satisfying the properties P1, P2, P3 of Lemma \ref{lem3.8} such that $V_1\subsetneq V_2\subsetneq \cdots\subsetneq V_k=V(D(A))$. Let the cardinalities of $V_1,V_2\setminus V_1,\ldots,V_k\setminus V_{k-1}$ be $n_1,n_2,\ldots,n_k$, respectively. Then $n_1+n_2+\cdots+n_k=n$. Since $D(A)$ is minimally strongly connected, it has no loop, and thus $n_1\geq 2$. Therefore $k\leq n-1$. Further, using Remark \ref{rem3.9}, we can conclude that the subgraph of $D(A)$ induced by $V_i$ has precisely $n_i+1$ more arcs than the subgraph of $D(A)$ induced by $V_{i-1}$ for $i=2,3,\ldots,k$. Therefore the total number of arcs in $D(A)$ is $n_1+n_2+\cdots+n_k+k-1\leq 2n-2$. The number of nonzero entries of $A$ is the number of arcs in $D(A)$. Therefore the number of nonzero entries of $A$ is at most $2n-2$.
\end{proof}

For any two subsets $\alpha,\beta$ of $\{1,2,\ldots,n\}$, let $A[\alpha,\beta]$ is the submatrix of $A$ with rows and columns corresponding to the indices in $\alpha$ and $\beta$ respectively. When $\alpha=\beta$, then we write $A[\alpha]$ instead of $A[\alpha,\alpha]$.

Let $A$ be a sign pattern matrix of order $n$. There exist $m$ nonempty and pairwise disjoint subsets of $\{1,2,\ldots,n\}$, say $\alpha_1,\alpha_2,\ldots,\alpha_m$ such that $\alpha_1\cup\alpha_2\cup\cdots\cup\alpha_m=\{1,2,\ldots,n\}$, and for every $i\in\{1,2,\ldots,m\}$, $A[\alpha_i]$ is an irreducible matrix and there exists no $\beta_i$ satisfying $\alpha_i\subsetneq\beta_i\subseteq\{1,2,\ldots,n\}$ such that $A[\beta_i]$ is irreducible. Each such $\alpha_i$ is said to be an irreducible component of $A$.

We have the following result about AP-irreducible sign pattern matrices.

\begin{lemma}\label{lem3.11}
Suppose that $A$ is a minimally AP-irreducible sign pattern matrix of order $n$ with all diagonal entries equal to $0$. If $A[\alpha,\beta]$ contains no $+$ for any two distinct irreducible components $\alpha$ and $\beta$ of $A_+$, then $A$ has at most $2n-2$ nonzero entries. Consequently, at least two rows of $A$ and at least two columns of $A$ contain precisely one nonzero entry.
\end{lemma}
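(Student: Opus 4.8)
The plan is to split the nonzero entries of $A$ into those lying inside a single irreducible component of $A_+$ and those joining two distinct components, and to bound each group separately with Lemma~\ref{lem3.10}. Write $\alpha_1,\dots,\alpha_m$ for the irreducible components of $A_+$ and $n_i=|\alpha_i|$, so $\sum_i n_i=n$. Because every row and column of $A$ contains a $+$ while, by hypothesis, every $+$ of $A$ lies in some block $A[\alpha_i]$, the row and column of each index already meet a $+$ inside its own component; since the diagonal is $0$, this forces $n_i\ge 2$ for all $i$ and makes each $A_+[\alpha_i]$ a strongly connected (irreducible) sign pattern. Moreover, any between-component nonzero must be a $-$, since a $+$ in $A[\alpha_s,\alpha_t]$ with $s\neq t$ would violate the hypothesis.

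The heart of the argument is to use minimal AP-irreducibility through its three requirements (irreducibility of $A$, a $+$ in every row and column, and irreducibility of $B_A$) to rule out superfluous entries. First I would show $A$ has no $-$ inside a component: deleting such a $-$ leaves every $+$ untouched (the second requirement survives), leaves $A_+[\alpha_i]$ strongly connected (so $D(A)$ stays strongly connected and the first requirement survives), and since the $+$-arcs of $A_+[\alpha_i]$ already make $\alpha_i$ strongly connected in $D(B_A)$, that digraph stays strongly connected too (third requirement); this contradicts minimality, so every within-component entry is a $+$. By the same mechanism, no $+$ inside $\alpha_i$ can be redundant for the strong connectivity of $A_+[\alpha_i]$: deleting a redundant one keeps $A_+[\alpha_i]$ strongly connected, hence (using that within $\alpha_i$ the digraph $D(B_A)$ equals $A_+[\alpha_i]$) keeps both $D(A)$ and $D(B_A)$ strongly connected, while a strongly connected digraph on $\ge 2$ vertices retains an out-arc and an in-arc at every vertex, so every row and column keeps a $+$. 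Thus each $A_+[\alpha_i]$ is minimally irreducible, and Lemma~\ref{lem3.10} bounds the within-component nonzeros by $\sum_{i}(2n_i-2)=2n-2m$.

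For the between-component $-$'s I would pass to the quotient digraph $Q$ on the $m$ components, with an arc $\alpha_s\to\alpha_t$ whenever some $-$ of $A$ runs from $\alpha_s$ to $\alpha_t$. The guiding observation is that a $-$ at a position $(i,j)$ with $i\in\alpha_s,\ j\in\alpha_t$ contributes the arc $\alpha_s\to\alpha_t$ to $D(A)$ but the reversed arc $\alpha_t\to\alpha_s$ to $D(B_A)$; since each component is internally strongly connected in both digraphs, strong connectivity of $A$ and of $B_A$ are equivalent to strong connectivity of $Q$ and of its reverse, which is the same condition. Minimality then forces at most one $-$ per ordered pair $(\alpha_s,\alpha_t)$ and forces $Q$ to be minimally strongly connected: if a between-component $-$ were redundant for the connectivity of $Q$ (or duplicated a pair already present), deleting it would leave the $+$'s untouched and leave $Q$ together with its reverse strongly connected, hence preserve all three requirements, contradicting minimality. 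Applying Lemma~\ref{lem3.10} to $Q$ bounds the between-component nonzeros by $2m-2$ (this reads as $0$ when $m=1$, where there are no $-$'s at all).

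Adding the two estimates gives at most $(2n-2m)+(2m-2)=2n-2$ nonzero entries. For the final assertion, each row contains a $+$ and so at least one nonzero; if $r$ rows had exactly one nonzero and the remaining $n-r$ rows had at least two, the total would be at least $2n-r$, whence $2n-r\le 2n-2$ gives $r\ge 2$, and the identical count on columns gives the claim. The main obstacle I anticipate is the bookkeeping in the minimality steps, namely verifying that deleting each candidate entry simultaneously preserves all three conditions in the definition of AP-irreducibility; the one genuinely delicate point is the direction-reversal of $-$ entries under $A\mapsto B_A$, which is exactly what makes the irreducibility of $A$ and of $B_A$ collapse to the single property that $Q$ is strongly connected.
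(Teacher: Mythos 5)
Your proof is correct and takes essentially the same route as the paper's: decompose $A$ by the irreducible components of $A_+$, show that each diagonal block $A_+[\alpha_i]$ and the quotient pattern on the $m$ components are minimally irreducible, apply Lemma \ref{lem3.10} to each, and sum to get $(2n-2m)+(2m-2)=2n-2$. The only difference is one of detail: you spell out the minimality-transfer arguments (including the direction-reversal of $-$ entries under $A\mapsto B_A$, which makes irreducibility of $A$ and of $B_A$ collapse to strong connectivity of the quotient) and the final row/column count, all of which the paper asserts without proof.
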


\begin{proof}
Let $\alpha_1,\alpha_2,\ldots,\alpha_m$ be the distinct irreducible components of $A_+$ with cardinalities $n_1,n_2,\ldots,n_m$, respectively. Then $\alpha_1\cup\cdots\cup\alpha_m=\{1,2,\ldots,n\}$ and $n_1+n_2+\cdots+n_m=n$. Since $A$ is minimally AP-irreducible, $A[\alpha_i]=A_+[\alpha_i]$ is minimally irreducible for $i=1,2,\ldots,m$. Moreover, for any two distinct irreducible components $\alpha$ and $\beta$ of $A_+$, $A[\alpha,\beta]$ contains at most one nonzero entry, which is $-$ (because $A[\alpha,\beta]$ contains no $+$ for any two distinct irreducible components $\alpha$ and $\beta$ of $A_+$). Let $X$ be a sign pattern matrix of order $m$ given by
$$x_{ij}=\begin{cases}
-,&\mbox{if }a_{pq}=-\mbox{ for some }p\in\alpha_i\mbox{ and }q\in\alpha_j;\\
0,&\mbox{otherwise}.
\end{cases}$$ 
Since $A$ is minimally AP-irreducible, $X$ is minimally irreducible. Therefore by Lemma \ref{lem3.10}, $X$ has at most $2m-2$ nonzero entries, $A[\alpha_i]$ has at most $2n_i-2$ nonzero entries for $i=1,2,\ldots,m$. Therefore the total number of nonzero entries of $A$ is at most $(2m-2)+(2n_1-2)+\cdots+(2n_m-2)=2n-2$.
\end{proof}

\begin{lemma}\label{lem3.12}
Let $A$ be a minimally AP-irreducible sign pattern matrix with all diagonal entries equal to $0$ such that $A[\alpha,\beta]$ contains no $+$ for any two distinct irreducible components $\alpha$ and $\beta$ of $A_+$. Suppose that $A$ allows an algebraically positive matrix with a simple positive eigenvalue and corresponding left, right positive eigenvectors. If $X$ is a super-pattern of $A$ such that $x_{ij}\neq a_{ij}$ for precisely one ordered pair $(i,j)$, then $X$ also allows an algebraically positive matrix with a simple positive eigenvalue and corresponding left, right positive eigenvectors. 
\end{lemma}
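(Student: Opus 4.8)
The plan is to realize $X$ as a single-entry perturbation of a matrix in $Q(A)$ that already carries the required spectral data, and then to control the perturbed simple eigenvalue by continuity. Since $X$ is a super-pattern of $A$ differing from $A$ in exactly one position $(i,j)$, we must have $a_{ij}=0$ and $x_{ij}\in\{+,-\}$, while $x_{pq}=a_{pq}$ for all $(p,q)\neq(i,j)$. By hypothesis there exist $\tilde{A}\in Q(A)$ and $\lambda>0$ with $\tilde{A}\U=\lambda\U$, $\V^T\tilde{A}=\lambda\V^T$, $\U,\V>\0$, and $\lambda$ a simple eigenvalue of $\tilde{A}$. Note that Theorem \ref{thm3.7} already yields that $X$ allows algebraic positivity, but through Theorem \ref{thm1.1} that only guarantees a simple \emph{real} eigenvalue with positive eigenvectors; it does not obviously keep the eigenvalue positive, which is why a refined argument tracking $\lambda$ is needed. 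Let $E_{ij}$ be the standard matrix unit and, for $t\in\mathbb{R}$, set $M(t)=\tilde{A}+tE_{ij}$; then $M(t)\in Q(X)$ precisely when $t$ has the sign of $x_{ij}$, and I would take such a $t$ of small absolute value.

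Next I would run the standard perturbation analysis of a simple eigenvalue. The characteristic polynomial $p_t(x)=\det(xI-M(t))$ is polynomial in $x$ and $t$, and $\lambda$ is a simple root of $p_0$, so $\partial_x p_0(\lambda)\neq 0$. By the implicit function theorem there is a real-analytic $t\mapsto\lambda(t)$ near $0$ with $\lambda(0)=\lambda$ and $p_t(\lambda(t))=0$, remaining a simple root for $|t|$ small; in particular $\lambda(t)$ stays real, and since $\lambda>0$ it stays positive. Because $\lambda(t)$ is simple, $\rk(M(t)-\lambda(t)I)=n-1$, so both one-sided eigenspaces are one-dimensional, and choosing eigenvectors from a suitable nonzero column of the adjugate of $M(t)-\lambda(t)I$ (respectively of its transpose) produces eigenvectors $\U(t),\V(t)$ that, after normalisation, depend continuously on $t$ with $\U(0)=\U$ and $\V(0)=\V$.

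Since positivity of a vector is an open condition, from $\U,\V>\0$ and the continuity of $\U(t),\V(t)$ I get $\U(t),\V(t)>\0$ for all sufficiently small $|t|$. Fixing $t$ with the sign of $x_{ij}$ and $|t|$ small enough that $\lambda(t)>0$ is simple and $\U(t),\V(t)>\0$, the matrix $M(t)\in Q(X)$ has a simple positive eigenvalue with positive left and right eigenvectors; by Theorem \ref{thm1.1} it is algebraically positive, and this is exactly the structure claimed for $X$.

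The only delicate point is the continuity (up to scaling) of $\U(t),\V(t)$ as $t\to 0$, and this is precisely where simplicity of $\lambda$ is essential: it forces the eigenspaces to be one-dimensional and lets one extract continuously varying eigenvectors from the adjugate. I would also remark that the structural hypotheses on $A$ (minimal AP-irreducibility, zero diagonal, and absence of a $+$ between distinct irreducible components of $A_+$) are not used in this argument, which relies only on $A$ allowing the stated spectral structure; consequently the same one-entry perturbation can be applied repeatedly to pass to any super-pattern while preserving the positive-eigenvalue structure.
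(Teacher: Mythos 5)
Your proof is correct, but it takes a genuinely different route from the paper's. The paper argues combinatorially: by Lemma~\ref{lem3.11}, the structural hypotheses on $A$ (minimal AP-irreducibility, zero diagonal, no $+$ between distinct irreducible components of $A_+$) force $A$ to have at least two rows with precisely one nonzero entry, and that entry must be $+$ since every row of an AP-irreducible pattern contains a $+$; because $X$ differs from $A$ in a single entry, $X$ retains at least one such row, say row $r$ with its lone $+$ in column $c$. Theorem~\ref{thm3.7} then produces an algebraically positive matrix $B\in Q(X)$, Theorem~\ref{thm1.1} gives it a simple \emph{real} eigenvalue $\lambda$ with positive left and right eigenvectors, and the single-$+$ row forces positivity via $b_{rc}u_c=\lambda u_r$. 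You replace this combinatorial step with analytic perturbation: tracking the simple positive eigenvalue of $\tilde A\in Q(A)$ under the rank-one perturbation $tE_{ij}$ via the implicit function theorem, and the eigenvectors via the adjugate, keeps everything real, simple, positive for small $|t|$. Both proofs correctly identify the same crux, namely that Theorem~\ref{thm3.7} combined with Theorem~\ref{thm1.1} does not by itself keep the eigenvalue positive. What your route buys is generality: as you observe, none of the structural hypotheses on $A$ are used, so you in fact prove a strengthened form of Theorem~\ref{thm3.7} (passing to super-patterns preserves the simple-positive-eigenvalue structure), which would apply verbatim wherever the paper invokes this lemma, e.g.\ in Step~3 of the proof of Theorem~\ref{thm3.16}. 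What the paper's route buys is economy of means: it stays entirely within the combinatorial toolkit it has already built (Lemma~\ref{lem3.11} and Theorem~\ref{thm3.7}), needs no eigenvalue perturbation theory, and makes visible why the lemma's hypotheses are stated as they are.
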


\begin{proof}
Since $A$ is a minimally AP-irreducible and allows a matrix with a positive eigenvalue and a corresponding positive right eigenvector, by Lemma \ref{lem3.11}, $A$ has at least two rows that contains precisely one nonzero entry as $+$. Then $X$ has at least one row that contains precisely one nonzero entry as $+$. Therefore by Theorem \ref{thm1.1} and \ref{thm3.7}, $X$ allows an algebraically positive matrix with a simple positive eigenvalue and corresponding left, right positive eigenvectors. 
\end{proof}

\begin{lemma}\label{lem3.13}
Let $a_{12}\neq0$. If $\lambda$ is a simple eigenvalue of
$$A=\left[\begin{array}{cc|c}
0&a_{12}&\0^T \\
a_{21}&0&\Y^T \\
\hline
\X&\0&R
\end{array}\right]$$
such that $A\U=\lambda\U$ and $\V^TA=\lambda \V^T$ for some $\U,\V>\0$, then $\lambda$ is a simple eigenvalue of
$$B=\left[\begin{array}{c|c}
\lambda+a_{21}-\frac{\lambda^2}{a_{12}}&\Y^T \\[1 mm]
\hline
\X&R
\end{array}\right].$$
Moreover, a right eigenvector $\W$ and a left eigenvector $\Z$ of $B$ corresponding to the eigenvalue $\lambda$ are given by:
$$w_i=\begin{cases} 
u_1,&\mbox{if }i=1;\\
u_{i+1},&\mbox{if }i\geq2.
\end{cases}\qquad \qquad
z_i=\begin{cases} 
v_2,&\mbox{if }i=1;\\
v_{i+1},&\mbox{if }i\geq2.
\end{cases}$$
\end{lemma}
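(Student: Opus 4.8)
The plan is to follow the template of the proof of Theorem~\ref{thm2.2}: first confirm by direct substitution that $\W$ and $\Z$ are, respectively, right and left eigenvectors of $B$ for $\lambda$; then show that the geometric multiplicity of $\lambda$ as an eigenvalue of $B$ equals $1$ by comparing $\rk(B-\lambda I)$ with $\rk(A-\lambda I)=n-1$; and finally invoke Lemma~\ref{lem2.1} together with the positivity of $\U,\V$ to upgrade geometric simplicity to simplicity.

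For the first step I would read off the scalar relations encoded in $A\U=\lambda\U$ and $\V^TA=\lambda\V^T$. Writing $\U'=(u_3,\ldots,u_n)^T$ and $\V'=(v_3,\ldots,v_n)^T$, the first row of $A\U=\lambda\U$ gives $a_{12}u_2=\lambda u_1$, the second row gives $a_{21}u_1+\Y^T\U'=\lambda u_2$, and the remaining rows give $\X u_1+R\U'=\lambda\U'$; dually, the second column of $\V^TA=\lambda\V^T$ gives $a_{12}v_1=\lambda v_2$, the first column gives $a_{21}v_2+\X^T\V'=\lambda v_1$, and the remaining columns give $v_2\Y^T+\V'^TR=\lambda\V'^T$. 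Substituting these into $B\W$ and $\Z^TB$, the only nontrivial components are the first ones, where the entry $\lambda+a_{21}-\frac{\lambda^2}{a_{12}}$ is designed precisely so that the relations $a_{12}u_2=\lambda u_1$ and $a_{12}v_1=\lambda v_2$ force the required cancellation; the remaining components reduce verbatim to $\X u_1+R\U'=\lambda\U'$ and $v_2\Y^T+\V'^TR=\lambda\V'^T$.

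The main work is the rank computation, which I expect to be the principal obstacle, since it is where the hypothesis $a_{12}\neq0$ is genuinely used. I would apply two rank-preserving elementary operations to $A-\lambda I$. First, the row operation $R_2\leftarrow R_2+\frac{\lambda}{a_{12}}R_1$ turns the $(2,2)$ entry into $0$ and the $(2,1)$ entry into $a_{21}-\frac{\lambda^2}{a_{12}}$, the $(1,1)$ entry of $B-\lambda I$. Next, the column operation $C_1\leftarrow C_1+\frac{\lambda}{a_{12}}C_2$ clears the $(1,1)$ entry without disturbing anything else, after which the first row and the second column each contain a single nonzero entry, namely $a_{12}$ at their common position $(1,2)$. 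Consequently the resulting matrix is permutation-similar to the direct sum of the lone pivot $[a_{12}]$ and the submatrix obtained by deleting its first row and second column, which is exactly $B-\lambda I$; hence $\rk(A-\lambda I)=1+\rk(B-\lambda I)$. Since $\lambda$ is simple for $A$ we have $\rk(A-\lambda I)=n-1$, so $\rk(B-\lambda I)=n-2$, and thus $\lambda$ has geometric multiplicity $1$ in $B$, which has order $n-1$.

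Finally, to pass from geometric multiplicity $1$ to simplicity I would apply Lemma~\ref{lem2.1}, for which it remains only to check $\Z^T\W\neq0$. But $\Z^T\W=u_1v_2+\sum_{i=3}^{n}u_iv_i$, and since $\U,\V>\0$ every summand is positive, so $\Z^T\W>0$. Therefore $\lambda$ is a simple eigenvalue of $B$, with the stated right and left eigenvectors.
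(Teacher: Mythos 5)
Your proposal is correct and follows essentially the same route as the paper: the paper's proof performs exactly the row and column operations you describe (displaying the resulting matrix, which after a column permutation is the direct sum of $[a_{12}]$ and $B-\lambda I$), concludes $\rk(A-\lambda I)=\rk(B-\lambda I)+1$, and then invokes Lemma~\ref{lem2.1}. Your write-up merely makes explicit two steps the paper leaves to the reader, namely the substitution check that $\W,\Z$ are eigenvectors and the positivity of $\Z^T\W$.
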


\begin{proof}
It is easy to verify that $B\W=\lambda\W$ and $\Z^TB=\lambda\Z^T$. By elementary row and column operations, we have the following:
$$A-\lambda I=\left[\begin{array}{cc|c}
-\lambda&a_{12}&\0^T \\
a_{21}&-\lambda&\Y^T \\
\hline
\X&\0&R-\lambda I
\end{array}\right]\longrightarrow
\left[\begin{array}{cc|c}
a_{12}&0&\0^T \\
0&a_{21}-\frac{\lambda^2}{a_{12}}&\Y^T \\[1 mm]
\hline
\0&\X&R-\lambda I
\end{array}\right]
=\left[\begin{array}{c|cc}
a_{12}&\0^T \\
\hline
\0&B-\lambda I
\end{array}\right]$$
Since $a_{12}\neq0$, $\rk(A-\lambda I)=\rk(B-\lambda I)+1$. Therefore by Lemma \ref{lem2.1}, we can conclude that $\lambda$ is a simple eigenvalue of $B$.
\end{proof}

\begin{remark}\label{rem3.14}
If $\lambda>0$ and all nonzero entries of $\Y$ are negative in Lemma \ref{lem3.13}, then $\lambda+a_{21}-\frac{\lambda^2}{a_{12}}>0$.
\end{remark}

The following result gives a sufficient condition for a sign pattern matrix to allow algebraic positivity.

\begin{theorem}\label{thm3.16}
Let $A$ be an AP-irreducible sign pattern matrix of order $n$. If $A[\alpha,\beta]$ contains no $+$ for any two distinct irreducible components $\alpha$ and $\beta$ of $A_+$, then $A$ allows algebraic positivity.
\end{theorem}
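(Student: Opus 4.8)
The plan is to prove the stronger statement that $A$ allows a real matrix possessing a simple positive eigenvalue $\lambda$ together with componentwise positive left and right eigenvectors; by Theorem \ref{thm1.1} this is equivalent to allowing algebraic positivity, and it is the exact form preserved by the constructions of Section \ref{sec2}. First I would use Theorem \ref{thm3.7}, together with Lemma \ref{lem3.5} to normalize by permutation, to reduce to the case where $A$ is \emph{minimally} AP-irreducible with zero diagonal (up to the caveat, discussed below, that a vertex whose only $+$ lies on the diagonal must be treated separately): any AP-irreducible super-pattern inherits algebraic positivity from a minimal subpattern, so it suffices to realize the minimal patterns, and Lemma \ref{lem3.12} lets me reinstate the deleted entries one at a time while keeping the simple-positive-eigenvalue form. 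The base case is when $A_+$ is irreducible, i.e.\ there is a single irreducible component: then the subpattern $A_+$ is irreducible with all nonzero off-diagonal entries equal to $+$, so Lemma \ref{lem3.6} gives algebraic positivity; concretely, realizing $A_+$ as an irreducible nonnegative matrix, Perron--Frobenius supplies a simple positive eigenvalue with strictly positive left and right eigenvectors, which is precisely the strong form, and Theorem \ref{thm3.7} passes it up to $A$.

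For the inductive step I would induct on the order $n$. Since $A$ is minimally AP-irreducible and satisfies the component hypothesis, Lemma \ref{lem3.11} supplies a row with a single nonzero entry, necessarily a $+$ (every row contains a $+$), i.e.\ a vertex of out-degree one in $D(A)$; after permuting by Lemma \ref{lem3.5} I would place $A$ in the block form of Lemma \ref{lem3.13}, with vertex $1$ feeding only into vertex $2$. Contracting this pendant arc at the pattern level yields an order-$(n-1)$ pattern $B'$; I would check that $B'$ is again AP-irreducible and satisfies the component hypothesis, the merged vertex acquiring a $+$ on its diagonal as predicted by Remark \ref{rem3.14}, so the inductive hypothesis provides a realization $\tilde B\in Q(B')$ with simple positive eigenvalue $\lambda$ and positive left, right eigenvectors. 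The key point is that the inverse of the contraction of Lemma \ref{lem3.13} is carried out explicitly by the constructions of Section \ref{sec2}: choosing the bridging entry positive or negative according to the target sign pattern, one applies Theorem \ref{thm2.2} or Theorem \ref{thm2.3}, while the $\epsilon$-parametrized variants of Theorems \ref{thm2.4} and \ref{thm2.5} split a bridge across several arcs so as to keep the lifted left eigenvector $\Z$ strictly positive. In each case the eigenvector formulas are given explicitly, and simplicity of $\lambda$ for the lifted matrix is certified by Lemma \ref{lem2.1} exactly as in the proofs of those theorems.

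The main obstacle will be the bookkeeping that keeps every hypothesis intact under the contraction and, above all, keeps the left eigenvector positive. Contracting a pendant arc merges two vertices and manufactures a positive diagonal entry, so I must verify carefully that the irreducible components of $B'_+$ and the ``no $+$ between components'' condition survive the merge (the delicate point, since merging can split or relabel components) and that the associated pattern $B'_+ - (B'_-)^T$ stays irreducible. Equally delicate is positivity of $\Z$: introducing a negative bridging entry tends to drive some coordinate of the left eigenvector to zero or below, which is exactly why Section \ref{sec2} requires several cases and the free parameter $\epsilon$; the heart of the argument is to choose $\epsilon\in(0,1)$ and the attachment position so that quantities such as $\left(\frac{\lambda}{\epsilon}-a_{11}\right)v_1$ or $\left(a_{1j}v_1+\sum_{i}a_{k+i,j}v_{k+i}\right)-\epsilon a_{1j}v_1$ remain positive, as recorded in the displayed inequalities of Theorems \ref{thm2.4} and \ref{thm2.5}. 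Finally, I would return to the reduction-to-minimal step and handle the diagonal-only-$+$ vertices and the preservation of the component hypothesis by selecting the minimal subpattern so that no inter-component $+$ is ever created.
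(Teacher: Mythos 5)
Your overall strategy---realize the strong form (simple positive eigenvalue with positive left and right eigenvectors), pass to a minimally AP-irreducible zero-diagonal subpattern, and use the Section~\ref{sec2} constructions as the engine---matches the paper's, but your inductive step has genuine gaps, and the paper's architecture exists precisely to avoid them. First, you cannot in general ``place $A$ in the block form of Lemma~\ref{lem3.13}'': that form requires not only that row $1$ have $a_{12}$ as its single nonzero entry (which Lemma~\ref{lem3.11} provides) but also that \emph{column} $2$ have $a_{12}$ as its only nonzero entry, i.e.\ the head of the pendant arc must have in-degree one; in general vertex $2$ has several in-arcs, and no permutation fixes this. Second, and more seriously, your induction has no decreasing measure. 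The component of $A_+$ containing the pendant arc can contain the $2$-cycle $1\leftrightarrow 2$ (e.g.\ the minimally strongly connected digraph with arcs $(\overrightarrow{1,2}),(\overrightarrow{2,1}),(\overrightarrow{2,3}),(\overrightarrow{3,2})$), and then contracting $(\overrightarrow{1,2})$ creates a $+$ diagonal entry at the merged vertex, pushing $B'$ out of the zero-diagonal class to which your inductive hypothesis applies; your only remedy for diagonal $+$'s---splitting the vertex into a $+$ $2$-cycle---is exactly the inverse of your contraction, so you are back at order $n$ and the recursion need not terminate. The paper never inducts downward on order: its Case I is a \emph{bottom-up} construction, using Lemma~\ref{lem3.8} to fix nested sequences both for the condensation digraph $D_0$ of the irreducible components of $X_+$ and inside each component, starting from a small cycle pattern realized via Lemma~\ref{lem3.6} and expanding one piece at a time by Theorems~\ref{thm2.2}--\ref{thm2.5}; diagonal $+$'s are removed once at the start (Case II) by splitting, and Case I never recurses back.

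Third, even where the contraction is clean, your inductive hypothesis is too weak to drive the lifts. Theorem~\ref{thm2.5} requires the realization of the smaller matrix to satisfy the quantitative condition $a_{1j}v_1+\sum_{i=1}^{n-k}a_{k+i,j}v_{k+i}>0$ for the relevant $k$, and the paper can invoke Theorems~\ref{thm2.4}.2--3 and \ref{thm2.5}.2--3 only because each step's displayed inequality (``Further, for some suitable $\epsilon$\ldots'') certifies exactly the hypothesis needed by the \emph{next} step. A realization handed to you by a bare appeal to ``simple positive eigenvalue with positive left, right eigenvectors'' need not satisfy these inequalities, and you have no control over which realization the induction returns; so the lifting step, which you describe as applying the Section~\ref{sec2} theorems ``exactly as in the proofs of those theorems,'' does not go through without strengthening the inductive statement to carry these eigenvector inequalities along---which is, in effect, what the paper's chained construction does. (Some of the hazards you flag are, by contrast, not real: minimality rules out a vertex $w$ having $+$ arcs to both $1$ and $2$ when $1$ has out-degree one, since the arc $(\overrightarrow{w,2})$ could then be deleted, and the component hypothesis plus minimality force all arcs within a component of $A_+$ to be $+$ and at most one arc between any two components, so no sign conflicts arise in the merge.)
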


\begin{proof}
Since $A$ is AP-irreducible, every row and column of $A$ contains a $+$. Then according to our hypothesis, $A[\alpha]=[+]$ for all singleton irreducible component $\alpha$ of $A_+$. Let $\alpha_1,\alpha_2,\ldots,\alpha_{m_r}$ be the distinct irreducible components of $A_+$. Let $X$ be a minimally AP-irreducible subpattern of $A$ such that $\alpha_1,\alpha_2,\ldots,\alpha_{m_r}$ are also distinct irreducible components of $X_+$. Then $X[\alpha_i]=X_+[\alpha_i]$ is minimally irreducible for $i=1,2,\ldots,m_r$.  
 
\noindent{\tt Case I:} $X$ has all diagonal entries $0$.
 
Since $X$ is minimally AP-irreducible, for any two distinct irreducible components $\alpha$ and $\beta$ of $X_+$, $X[\alpha,\beta]$ contains at most one nonzero entry, which is $-$. We color the arc $(\overrightarrow{p,q})$ in $D(X)$ blue if both $p,q$ belong to same $\alpha_i$. Otherwise, we color the arc red. Since $B_X$ is irreducible, for every $i\in\{1,2,\ldots,m_r\}$, $\alpha_i$ contains initial vertex of some red arc and terminal vertex of some other red arc. 

We associate with $D(X)$ a digraph $D_0$ having vertices $v_1,v_2,\ldots,v_{m_r}$ such that $D_0$ has no loop and $(\overrightarrow{v_i,v_j})$ is an arc of $D_0$ if and only if $(\overrightarrow{p,q})$ is a red arc in $D(X)$ for some $p\in \alpha_i$ and $q\in \alpha_j$. Since $X$ is minimally AP-irreducible, $D_0$ is minimally strongly connected. So by Lemma \ref{lem3.8}, there exists a nested sequence $V_{0,1},V_{0,2},\ldots,V_{0,r}$ of vertex sets satisfying the properties P1, P2, P3 such that $V_{0,1}\subsetneq V_{0,2}\subsetneq \cdots\subsetneq V_{0,r}=V(D_0)$. Without loss of generality, we may assume that $V_{0,i}=\{v_1,v_2,\ldots,v_{m_i}\}$ for $i=1,2,\ldots,r$. Now we give a constructive method to show that $X$ allows algebraic positivity. 
\paragraph{Step 1.} Since the subgraph of $D_0$ induced by $V_{0,1}$ is a directed cycle, let $(\overrightarrow{v_1, v_2}),\ldots,(\overrightarrow{v_{m_1-1},v_{m_1}}),(\overrightarrow{v_{m_1},v_1})$ be the arcs of that directed cycle. Let $B_{[1,0]}$ be the matrix of order $m_1$ with index set $\{v_1,v_2,\ldots,v_{m_1}\}$, defined by
$$\left(B_{[1,0]}\right)_{ij}=\begin{cases}
+,&\mbox{if }i=j; \\
-,&\mbox{if }(i,j)\in\{(v_1, v_2),\ldots,(v_{m_1-1},v_{m_1}),(v_{m_1},v_1)\}; \\
0,&\mbox{otherwise}.
\end{cases}$$
Since $B_{[1,0]}$ is irreducible and all nonzero off-diagonal entries of $B_{[1,0]}$ are $-$, by Lemma \ref{lem3.6}, $B_{[1,0]}$ allows algebraic positivity. Since every diagonal entry of $B_{[1,0]}$ is $+$, by Theorem \ref{thm1.1}, we can find an algebraically positive matrix in $Q(B_{[1,0]})$ with a simple positive eigenvalue and corresponding left, right positive eigenvectors.
\paragraph{Step 2.} Let $(\overrightarrow{p_1,q_2}),(\overrightarrow{p_2,q_3}),\ldots,(\overrightarrow{p_{m_1-1},q_{m_1}}),(\overrightarrow{p_{m_1},q_1})$ be the red arcs in $D(X)$, where $p_s,q_s\in\alpha_s$ for $s=1,2,\ldots,m_1$. Suppose that for some $s\in\{1,2,\ldots,m_1\}$, we have constructed a sign pattern matrix $B_{[1,s-1]}$ with index set $\alpha_1\cup\cdots\cup\alpha_{s-1}\cup\{v_s,\ldots,v_{m_1}\}$, which is defined by
$$\left( B_{[1,s-1]}\right)_{ij}=\begin{cases}
\left(B_{[1,0]}\right)_{ij},&\mbox{if }i,j\in\{v_s,\ldots,v_{m_1}\};\\
x_{ij},&\mbox{if }i,j\in\alpha_1\cup\cdots\cup\alpha_{s-1}; \\
-,&\mbox{if }(i,j)\in\{(p_{s-1},v_s),(v_{m_1},q_1)\}; \\
0,&\mbox{otherwise}
\end{cases}$$
allows an algebraically positive matrix with a simple positive eigenvalue and corresponding left, right positive eigenvectors. 

Now we construct a sign pattern matrix $B_{[1,s]}$ that allows an algebraically positive matrix with a simple positive eigenvalue and corresponding left, right positive eigenvectors such that 
$$\left( B_{[1,s]}\right)_{ij}=x_{ij}\mbox{ for all }i,j\in\alpha_1\cup\cdots\cup\alpha_s.$$
\begin{enumerate}
\item Suppose that the subgraph of $D(X)$ induced by $\alpha_s$ is a cycle. We construct a sign pattern matrix $B_{[1,s]}$ with index set $\alpha_1\cup\cdots\cup\alpha_s\cup\{v_{s+1},\ldots,v_{m_1}\}$ as follows:
$$\left( B_{[1,s]}\right)_{ij}=\begin{cases}
\left( B_{[1,s-1]}\right)_{ij},&\mbox{if }i,j\in\alpha_1\cup\cdots\cup\alpha_{s-1}\cup\{v_{s+1},\ldots,v_{m_1}\};\\
+,&\mbox{if }(\overrightarrow{i,j})\mbox{ is a blue arc in }D(X)\mbox{ with }i,j\in\alpha_s; \\
-,&\mbox{if }(i,j)\in\{(p_{s-1},q_s),(p_s,v_{s+1}),(v_{m_1},q_1)\}; \\
0,&\mbox{otherwise}.
\end{cases}$$ 
By Theorem \ref{thm2.4}.1 and Lemma \ref{lem3.5}, $B_{[1,s]}$ allows an algebraically positive matrix with a simple positive eigenvalue and corresponding left, right positive eigenvectors. Moreover, $$\left( B_{[1,s]}\right)_{ij}=x_{ij}\mbox{ for all }i,j\in\alpha_1\cup\cdots\cup\alpha_s.$$
\item Suppose that the subgraph of $D(X)$ induced by $\alpha_s$ is not a cycle. Since $X[\alpha_s]=X_+[\alpha_s]$ is minimally irreducible, the subgraph of $D(X)$ induced by $\alpha_s$ is minimally strongly connected and thus by Lemma \ref{lem3.8}, there exists a nested sequence $V_{s,1},V_{s,2},\ldots,V_{s,k_s}$ of vertex sets satisfying the properties P1, P2, P3 such that $$p_s\in V_{s,1}\subsetneq V_{s,2}\subsetneq \cdots\subsetneq V_{s,k_s}=\alpha_s.$$ 
For every $t\in\{1,2,\ldots,k_s-1\}$, let $p_{s,t},q_{s,t}\in V_{s,t}$ be the initial and the terminal vertices, respectively, of blue arcs whose other end vertices are in $V_{s,t+1}\setminus V_{s,t}$. The subgraph of $D(X)$ induced by $\alpha_s$ is minimally strongly connected and thus the arc $(\overrightarrow{p_{s,t},q_{s,t}})$ does not exist. Equivalently, the $(p_{s,t},q_{s,t})$-th entry of $X$ is 0.
\begin{enumerate}
\item[A.] Let $q_s\in V_{s,1}$. We construct a sign pattern matrix $B_{[1,s-1],1}$ with index set $V_{s,1}\cup(\alpha_1\cup\cdots\cup\alpha_{s-1})\cup\{v_{s+1},\ldots,v_{m_1}\}$ as follows:
$$\left( B_{[1,s-1],1}\right)_{ij}=\begin{cases}
\left( B_{[1,s-1]}\right)_{ij},&\mbox{if }i,j\in\alpha_1\cup\cdots\cup\alpha_{s-1}\cup\{v_{s+1},\ldots,v_{m_1}\};\\
+,&\mbox{if }(i,j)=(p_{s,1},q_{s,1})\mbox{ or }\\
&\qquad(\overrightarrow{i,j})\mbox{ is a blue arc in }D(X)\mbox{ with }i,j\in V_{s,1}; \\
-,&\mbox{if }(i,j)\in\{(p_{s-1},q_s),(p_s,v_{s+1}),(v_{m_1},q_1)\}; \\
0,&\mbox{otherwise}.
\end{cases}$$ 
Let $\tilde{B}_{[1,s-1],1}$ be the subpattern of $B_{[1,s-1],1}$ obtained by replacing the $(p_{s,1},q_{s,1})$-th entry with 0. Then by Lemma \ref{lem3.5} and Theorem \ref{thm2.4}.1, $\tilde{B}_{[1,s-1],1}$ allows an algebraically positive matrix with simple positive eigenvalue and corresponding left right positive eigenvectors.

If $p_{s,1}=p_s=q_s$, then $\tilde{B}_{[1,s-1],1}$ has at least one row with precisely one nonzero entry, which is $+$ (because the subgraph of $D(X)$ induced by $V_{s,1}$ is a directed cycle of length at least 2 with no red arc). Otherwise, either the $p_{s,1}$-th row or the $p_{s,1}$-th column of $\tilde{B}_{[1,s-1],1}$ contains precisely one nonzero entry, which is $+$. Therefore by Theorem \ref{thm3.7}, $B_{[1,s-1],1}$ allows an algebraically positive matrix with simple positive eigenvalue and corresponding left right positive eigenvectors. Moreover, $$\left( B_{[1,s-1],1}\right)_{ij}=x_{ij}\mbox{ for all }i,j\in V_{s,1}\cup(\alpha_1\cup\cdots\cup\alpha_{s-1})\mbox{ satisfying }(i,j)\neq(p_{s,1},q_{s,1}).$$ 
\begin{enumerate}
\item[A.1.] For every $t\in\{2,3,\ldots,k_s-1\}$, after obtaining $B_{[1,s-1],t-1}$, we construct a sign pattern matrix $B_{[1,s-1],t}$ with index set $V_{s,t}\cup(\alpha_1\cup\cdots\cup\alpha_{s-1})\cup\{v_{s+1},\ldots,v_{m_1}\}$ as follows:
$$\left( B_{[1,s-1],t}\right)_{ij}=\begin{cases}
\left( B_{[1,s-1],t-1}\right)_{ij},&\mbox{if }i,j\in V_{s,t-1}\cup\alpha_1\cup\cdots\cup\alpha_{s-1}\cup\{v_{s+1},\ldots,v_{m_1}\}\\
&\hspace{3.3 cm}\mbox{with }(i,j)\neq(p_{s,t-1},q_{s,t-1});\\
+,&\mbox{if }(i,j)=(p_{s,t},q_{s,t}),\mbox{ or }(\overrightarrow{i,j})\mbox{ is a blue arc in }D(X)\\
&\qquad\quad\mbox{with }i,j\in (V_{s,t}\setminus V_{s,t-1})\cup\{p_{s,t-1},q_{s,t-1}\}; \\
0,&\mbox{otherwise}.
\end{cases}$$
Now by Lemma \ref{lem3.5} and one of Theorem \ref{thm2.5}.2 and \ref{thm2.5}.3, $B_{[1,s-1],t}$ allows an algebraically positive matrix satisfying the hypothesis of Theorem \ref{thm2.5}. Moreover, $$\left( B_{[1,s-1],t}\right)_{ij}=x_{ij}\mbox{ for all }i,j\in V_{s,t}\cup(\alpha_1\cup\cdots\cup\alpha_{s-1})\mbox{ satisfying }(i,j)\neq(p_{s,t},q_{s,t}).$$ 
After obtaining $B_{[1,s-1],k_s-1}$, we construct a sign pattern matrix $B_{[1,s-1],k_s}$ with index set $V_{s,k_s}\cup(\alpha_1\cup\cdots\cup\alpha_{s-1})\cup\{v_{s+1},\ldots,v_{m_1}\}$ as follows:
$$\left( B_{[1,s-1],k_s}\right)_{ij}=\begin{cases}
\left( B_{[1,s-1],k_s-1}\right)_{ij},&\mbox{if }i,j\in V_{s,k_s-1}\cup\alpha_1\cup\cdots\cup\alpha_{s-1}\cup\{v_{s+1},\ldots,v_{m_1}\}\\
&\hspace*{2.8 cm}\mbox{with }(i,j)\neq(p_{s,k_s-1},q_{s,k_s-1});\\
+,&\mbox{if }(\overrightarrow{i,j})\mbox{ is a blue arc in }D(X)\mbox{ with}\\
&\hspace*{1.5 cm} i,j\in (V_{s,k_s}\setminus V_{s,k_s-1})\cup\{p_{s,k_s-1},q_{s,k_s-1}\}; \\
0,&\mbox{otherwise}.
\end{cases}$$
Set $B_{[1,s]}=B_{[1,s-1],k_s}$. By Theorem \ref{thm2.5}.1 and Lemma \ref{lem3.5}, $B_{[1,s]}$ allows an algebraically positive matrix with a simple positive eigenvalue and corresponding left, right positive eigenvectors. Since $V_{s,k_s}=\alpha_s$, 
$$\left( B_{[1,s]}\right)_{ij}=x_{ij}\mbox{ for all }i,j\in \alpha_1\cup\cdots\cup\alpha_s.$$ 
\end{enumerate}
\item[B.] Let $q_s\in V_{s,k}$ for some $k$ satisfying $2\leq k\leq k_s$ such that $q_s\notin V_{s,k-1}$. We construct a sign pattern matrix $B_{[1,s-1],1}$ with index set $V_{s,1}\cup(\alpha_1\cup\cdots\cup\alpha_{s-1})\cup\{v_{s+1},\ldots,v_{m_1}\}$ as follows:
$$\left( B_{[1,s-1],1}\right)_{ij}=\begin{cases}
\left( B_{[1,s-1]}\right)_{ij},&\mbox{if }i,j\in\alpha_1\cup\cdots\cup\alpha_{s-1}\cup\{v_{s+1},\ldots,v_{m_1}\};\\
+,&\mbox{if }(i,j)=(p_{s,1},q_{s,1})\mbox{ or }\\
&\hspace*{1 cm}(\overrightarrow{i,j})\mbox{ is a blue arc in }D(X)\mbox{ with }i,j\in V_{s,1}; \\
-,&\mbox{if }(i,j)\in\{(p_{s-1},q_{s,1}),(p_s,v_{s+1}),(v_{m_1},q_1)\}; \\
0,&\mbox{otherwise}.
\end{cases}$$ 
Now by Lemma \ref{lem3.5} and one of Theorem \ref{thm2.4}.2 and \ref{thm2.4}.3, $B_{[1,s-1],1}$ allows an algebraically positive matrix satisfying the hypothesis of Theorem \ref{thm2.5}. Moreover, $$\left( B_{[1,s-1],1}\right)_{ij}=x_{ij}\mbox{ for all }i,j\in V_{s,1}\cup(\alpha_1\cup\cdots\cup\alpha_{s-1})\mbox{ with }(i,j)\notin\{(p_{s,1},q_{s,1}),(p_{s-1},q_{s,1})\}.$$ 

For every $t\in\{2,3,\ldots,k-1\}$, after obtaining $B_{[1,s-1],t-1}$, we construct a sign pattern matrix $B_{[1,s-1],t}$ with index set $V_{s,t}\cup(\alpha_1\cup\cdots\cup\alpha_{s-1})\cup\{v_{s+1},\ldots,v_{m_1}\}$ as follows:
$$\left( B_{[1,s-1],t}\right)_{ij}=\begin{cases}
\left( B_{[1,s-1],t-1}\right)_{ij},&\mbox{if }i,j\in V_{s,t-1}\cup\alpha_1\cup\cdots\cup\alpha_{s-1}\cup\{v_{s+1},\ldots,v_{m_1}\}\\
&\hspace*{1.3 cm}\mbox{with }(i,j)\notin\{(p_{s,t-1},q_{s,t-1}),(p_{s-1},q_{s,t-1})\};\\
+,&\mbox{if }(i,j)=(p_{s,t},q_{s,t}),\mbox{ or }(\overrightarrow{i,j})\mbox{ is a blue arc in }D(X)\\
&\hspace*{1.5 cm}\mbox{with } i,j\in (V_{s,t}\setminus V_{s,t-1})\cup\{p_{s,t-1},q_{s,t-1}\}; \\
-,&\mbox{if }(i,j)=(p_{s-1},q_{s,t});\\
0,&\mbox{otherwise}.
\end{cases}$$
Now by Lemma \ref{lem3.5} and one of Theorem \ref{thm2.5}.2 and \ref{thm2.5}.3, $B_{[1,s-1],t}$ allows an algebraically positive matrix satisfying the hypothesis of Theorem \ref{thm2.5}. Moreover, $$\left( B_{[1,s-1],t}\right)_{ij}=x_{ij}\mbox{ for all }i,j\in V_{s,t}\cup(\alpha_1\cup\cdots\cup\alpha_{s-1})\mbox{ with }(i,j)\notin\{(p_{s,t},q_{s,t}),(p_{s-1},q_{s,t})\}.$$ 
\begin{enumerate}
\item[B.1.] If $k=k_s$, then we construct a sign pattern matrix $B_{[1,s-1],k}$ with index set $V_{s,k}\cup(\alpha_1\cup\cdots\cup\alpha_{s-1})\cup\{v_{s+1},\ldots,v_{m_1}\}$ as follows:
$$\left( B_{[1,s-1],k}\right)_{ij}=\begin{cases}
\left( B_{[1,s-1],k-1}\right)_{ij},&\mbox{if }i,j\in V_{s,k-1}\cup\alpha_1\cup\cdots\cup\alpha_{s-1}\cup\{v_{s+1},\ldots,v_{m_1}\}\\
&\hspace*{1 cm}\mbox{with }(i,j)\notin\{(p_{s,k-1},q_{s,k-1}),(p_{s-1},q_{s,k-1})\};\\
+,&\mbox{if }(\overrightarrow{i,j})\mbox{ is a blue arc in }D(X)\mbox{ with }\\
&\hspace*{2.7 cm} i,j\in (V_{s,k}\setminus V_{s,k-1})\cup\{p_{s,k-1},q_{s,k-1}\}; \\
-,&\mbox{if }(i,j)=(p_{s-1},q_s);\\
0,&\mbox{otherwise}.
\end{cases}$$
Set $B_{[1,s]}=B_{[1,s-1],k}$. By Theorem \ref{thm2.5}.1 and Lemma \ref{lem3.5}, $B_{[1,s]}$ allows an algebraically positive matrix with a simple positive eigenvalue and corresponding left, right positive eigenvectors. Since $V_{s,k}=\alpha_s$, 
$$\left( B_{[1,s]}\right)=x_{ij}\mbox{ for all }i,j\in\alpha_1\cup\cdots\cup\alpha_s.$$
\item[B.2.] If $k<k_s$, then we construct a sign pattern matrix $B_{[1,s-1],k}$ with index set $V_{s,k}\cup(\alpha_1\cup\cdots\cup\alpha_{s-1})\cup\{v_{s+1},\ldots,v_{m_1}\}$ as follows:
$$\left( B_{[1,s-1],k}\right)_{ij}=\begin{cases}
\left( B_{[1,s-1],k-1}\right)_{ij},&\mbox{if }i,j\in V_{s,k-1}\cup\alpha_1\cup\cdots\cup\alpha_{s-1}\cup\{v_{s+1},\ldots,v_{m_1}\}\\
&\hspace*{0.8 cm}\mbox{with }(i,j)\notin\{(p_{s,k-1},q_{s,k-1}),(p_{s-1},q_{s,k-1})\};\\
+,&\mbox{if }(i,j)=(p_{s,k},q_{s,k}),\mbox{ or }(\overrightarrow{i,j})\mbox{ is a blue arc in }D(X)\\
&\hspace*{1 cm}\mbox{with } i,j\in (V_{s,k}\setminus V_{s,k-1})\cup\{p_{s,k-1},q_{s,k-1}\}; \\
-,&\mbox{if }(i,j)=(p_{s-1},q_s);\\
0,&\mbox{otherwise}.
\end{cases}$$
Let $\tilde{B}_{[1,s-1],k}$ be the subpattern of $B_{[1,s-1],k}$ obtained by replacing the $(p_{s,k},q_{s,k})$-th entry with 0. Then by Lemma \ref{lem3.5} and Theorem \ref{thm2.4}.1, $\tilde{B}_{[1,s-1],k}$ allows an algebraically positive matrix with simple positive eigenvalue and corresponding left right positive eigenvectors.

If $p_{s,k}=p_s=q_s$, then $\tilde{B}_{[1,s-1],k}$ has at least one row with no $-$ (because the subgraph of $D(X)$ induced by $V_{s,k}$ has no red arc and it has at least two vertices). Otherwise, either the $p_{s,k}$-th row or the $p_{s,k}$-th column of $\tilde{B}_{[1,s-1],k}$ contains no $-$. Therefore by Theorem \ref{thm3.7}, $B_{[1,s-1],k}$ allows an algebraically positive matrix with simple positive eigenvalue and corresponding left right positive eigenvectors. Moreover, $$\left( B_{[1,s-1],k}\right)_{ij}=x_{ij}\mbox{ for all }i,j\in V_{s,k}\cup(\alpha_1\cup\cdots\cup\alpha_{s-1})\mbox{ with }(i,j)\neq(p_{s,k},q_{s,k}).$$ 
Now we apply the procedure in A.1. to construct $B_{[1,s-1],k_s}$. Set $B_{[1,s]}=B_{[1,s-1],k_s}$. Then $B_{[1,s]}$ allows an algebraically positive matrix with a simple positive eigenvalue and corresponding left, right positive eigenvectors such that   
$$\left( B_{[1,s]}\right)_{ij}=x_{ij}\mbox{ for all }i,j\in \alpha_1\cup\cdots\cup\alpha_s.$$ 
\end{enumerate}
\end{enumerate}
However, whether $\alpha_s$ is a directed cycle or not, we have $B_{[1,s]}$ as follows:
$$\left( B_{[1,s]}\right)_{ij}=\begin{cases}
\left( B_{[1,0]}\right)_{ij},&\mbox{if }i,j\in\{v_{s+1},\ldots,v_{m_1}\};\\
x_{ij},&\mbox{if }i,j\in\alpha_1\cup\cdots\cup\alpha_s; \\
-,&\mbox{if }(i,j)\in\{(p_s,v_{s+1}),(v_{m_1},q_1)\}; \\
0,&\mbox{otherwise}
\end{cases}$$
\end{enumerate}
When $s=m_1$, then we have a sign pattern matrix $B_{[1,m_1]}$ with index set $\alpha_1\cup\cdots\cup\alpha_{m_1}$ that allows an algebraically positive matrix with a simple positive eigenvalue and corresponding left, right positive eigenvectors such that $$\left( B_{[1,m_1]}\right)_{ij}=x_{ij}\mbox{ for all }i,j\in \alpha_1\cup\cdots\cup\alpha_{m_1}.$$ 
\paragraph{Step 3.} Let $m_0=0$. Suppose that we have constructed $B_{[t,m_t-m_{t-1}]}$ for some $t\in\{1,2,\ldots,r-1\}$ that allows an algebraically positive matrix with a simple positive eigenvalue and corresponding left, right positive eigenvectors such that
$$\left( B_{[t,m_t-m_{t-1}]}\right)_{ij}=x_{ij}\mbox{ for all }i,j\in \alpha_1\cup\cdots\cup\alpha_{m_t}.$$
In $D_0$, we have $V_{0,t+1}\setminus V_{0,t}=\{v_{m_t+1},\ldots,v_{m_{t+1}}\}$. In accordance with Remark \ref{rem3.9}, let $p_0,q_0\in\alpha_1\cup\cdots\cup\alpha_{m_t}$ be such that $p_0$ is the initial vertex of a red arc in $D(X)$ with the terminal vertex in $\alpha_p$ and $q_0$ is the terminal vertex of a red arc in $D(X)$ with the initial vertex in $\alpha_q$ for some $p,q\in\{m_t+1,m_t+2,\ldots,m_{t+1}\}$. Now we construct $B_{[t+1,0]}$ with index set $\alpha_1\cup\cdots\cup\alpha_{m_t}\cup\{v_{m_t+1},\ldots,v_{m_{t+1}}\}$ as follows:
$$\left( B_{[t+1,0]}\right)_{ij}=\begin{cases}
x_{ij},&\mbox{if }i,j\in\alpha_1\cup\cdots\cup\alpha_{m_t}; \\
+,&\mbox{if }i=j\in\{v_{m_t+1},\ldots,v_{m_{t+1}}\};\\
-,&\mbox{if }(i,j)\in\{(p_0,v_p),(v_q,q_0)\},\mbox{ or }(\overrightarrow{i,j})\mbox{ is an arc in}\\
&\hspace*{3 cm}\mbox{the subgraph of } D_0 \mbox{ induced by }V_{0,t+1}\setminus V_{0,t};\\
0,&\mbox{otherwise}
\end{cases}$$
Let $p_0\in\alpha_{\tilde{p}}$ and $q_0\in\alpha_{\tilde{q}}$ for some $\tilde{p},\tilde{q}\in\{1,2,\ldots,m_t\}$. Since $D_0$ is minimally strongly connected and it has the arcs $(\overrightarrow{v_{\tilde{p}},v_p})$ and $(\overrightarrow{v_q,v_{\tilde{q}}})$, by Remark \ref{rem3.9}, $D_0$ does not have the arc $(\overrightarrow{v_{\tilde{p}},v_{\tilde{q}}})$. So by the definition of $D_0$, either $(\overrightarrow{p_0,q_0})$ is a blue arc in $D(X)$, or $D(X)$ does not have the arc $(\overrightarrow{p_0,q_0})$.
\begin{enumerate}
\item If $(\overrightarrow{p_0,q_0})$ is a blue arc in $D(X)$, then the $(p_0,q_0)$-th entry of $B_{[t,m_t-m_{t-1}]}$ is $+$. Therefore by Lemma \ref{lem3.5} and Theorem \ref{thm2.3}, $B_{[t+1,0]}$ allows an algebraically positive matrix with a simple positive eigenvalue and corresponding left, right positive eigenvectors.
\item If $D(X)$ does not have the arc $(\overrightarrow{p_0,q_0})$, then the $(p_0,q_0)$-th entry of $B_{[t,m_t-m_{t-1}]}$ is $0$. We construct a super-pattern of $B_{[t,m_t-m_{t-1}]}$ by replacing \enquote*{$0$} with \enquote*{$-$} at $(p_0,q_0)$-th entry. Since $X$ is minimally AP-irreducible, $B_{[t,m_t-m_{t-1}]}$ is minimally AP-irreducible. Again according to our assumption, all diagonal entries of $B_{[t,m_t-m_{t-1}]}$ are 0. So by Lemma \ref{lem3.12}, the above mentioned super-pattern of $B_{[t,m_t-m_{t-1}]}$ allows an algebraically positive matrix satisfying the hypothesis of Theorem \ref{thm2.2}. Therefore by Lemma \ref{lem3.5} and Theorem \ref{thm2.2}, $B_{[t+1,0]}$ allows an algebraically positive matrix with a simple positive eigenvalue and corresponding left, right positive eigenvectors.
\end{enumerate} 
Note that $$\left( B_{[t+1,0]}\right)_{ij}=x_{ij}\mbox{ for all }i,j\in \alpha_1\cup\cdots\cup\alpha_{m_t}.$$
Now we repeat the method in Step 2 to construct a sign pattern matrix $B_{[t+1,m_{t+1}-m_t]}$ with index set $\alpha_1\cup\cdots\cup\alpha_{m_{t+1}}$ which allows an algebraically positive matrix with a simple positive eigenvalue and corresponding left, right positive eigenvectors such that $$\left( B_{[t+1,m_{t+1}-m_t]}\right)_{ij}=x_{ij}\mbox{ for all }i,j\in \alpha_1\cup\cdots\cup\alpha_{m_{t+1}}.$$ 
When $t=r-1$, then we have a sign pattern matrix $B_{[r,m_r-m_{r-1}]}$ with index set $\alpha_1\cup\cdots\cup\alpha_{m_r}$ which allows an algebraically positive matrix with a simple positive eigenvalue and corresponding left, right positive eigenvectors such that $$\left( B_{[r,m_r-m_{r-1}]}\right)_{ij}=x_{ij}\mbox{ for all }i,j\in \alpha_1\cup\cdots\cup\alpha_{m_r}.$$ 
Therefore $B_{[r,m_r-m_{r-1}]}=X$. So by Theorem \ref{thm1.1}, $X$ allows algebraic positivity.

\noindent{\tt Case II:} $X$ has some nonzero diagonal entries.

Since $X$ is minimally AP-irreducible, $x_{ii}\neq-$ for all $i\in\{1,2,\ldots,n\}$. Suppose that $x_{ii}=+$ for $i=1,2,\ldots,k$ (without loss of generality). For every $i\in\{1,2,\ldots,k\}$, delete the row and the column corresponding to the index $i$, and append two rows and two columns corresponding to the indices $i'$ and $i''$ such that
\begin{enumerate}
\item both $(i',i'')$-th entry and $(i'',i')$-th entry are $+$, 
\item for every $j\neq i''$, $(i',j)$-th entry is $0$ and $(j,i')$-th entry is the $(j,i)$-th entry of $X$,
\item for every $j\neq i'$, $(j,i'')$-th entry is $0$ and $(i'',j)$-th entry is the $(i,j)$-th entry of $X$.
\end{enumerate}
Let $Y$ be the new sign pattern matrix. Then $Y$ is minimally AP-irreducible and all diagonal entries of $Y$ are 0, and thus by {\tt Case I}, $Y$ allows an algebraically positive matrix with a simple positive eigenvalue and corresponding left, right positive eigenvectors. Therefore by Lemma \ref{lem3.13} and Remark \ref{rem3.14}, $A$ allows an algebraically positive matrix with a simple positive eigenvalue and corresponding left, right positive eigenvectors. So by Theorem \ref{thm1.1}, $X$ allows algebraic positivity.

Since $A$ is a super-pattern of $X$, by Theorem \ref{thm3.7}, $A$ allows algebraic positivity.
\end{proof}

\section{Example to illustrate the methodology to prove Theorem \ref{thm3.16}}\label{sec4}
Let us consider the sign pattern matrix
$$\begin{blockarray}{cccccccccccc}
&1&2&3&4&5&6&7&8&9&10&11 \\
\begin{block}{c[cccc|cccc|c|cc]}
1&0&+&0&0&0&0&-&0&0&0&0 \\
2&0&0&+&0&0&0&0&0&0&0&0 \\
3&+&0&0&+&0&0&0&0&0&0&0 \\
4&0&+&0&0&0&0&0&0&0&0&0 \\
\cline{2-12}
5&0&0&0&0&0&+&0&+&-&0&- \\
6&0&0&0&0&0&0&+&0&0&0&0 \\
7&0&0&0&0&+&0&0&0&0&0&0 \\
8&0&0&0&0&0&0&+&0&0&0&0 \\
\cline{2-12}
9&0&0&0&-&0&0&0&0&+&0&0 \\
\cline{2-12}
10&0&0&0&-&0&0&0&0&0&0&+ \\
11&0&0&0&0&0&0&0&0&0&+&0 \\
\end{block}
\end{blockarray}=X.$$
Observe that $X$ is AP-irreducible. Now $\alpha=\{9\}$ is an irreducible component of $X_+$ and $X[\alpha]=[+]$. Moreover, there is no more such $\alpha$.

Now delete the 9-th row and the 9-th column, and append two rows and two columns corresponding to the indices $9'$ and $9''$ such that both $(9',9'')$-th entry and $(9'',9')$-th entry are $+$, and
\begin{enumerate}
\item for every $j\neq 9''$, $(9',j)$-th entry is $0$, and $(j,9')$-th entry is $(j,9)$-th entry of $X$,
\item for every $j\neq 9'$, $(j,9'')$-th entry is $0$, and $(9'',j)$-th entry is $(9,j)$-th entry of $X$.
\end{enumerate}
Therefore we have a new sign pattern matrix $Y$ as follows:
$$\begin{blockarray}{ccccccccccccc}
&1&2&3&4&5&6&7&8&9'&9''&10&11 \\
\begin{block}{c[cccc|cccc|cc|cc]}
1&0&+&0&0&0&0&-&0&0&0&0&0 \\
2&0&0&+&0&0&0&0&0&0&0&0&0 \\
3&+&0&0&+&0&0&0&0&0&0&0&0 \\
4&0&+&0&0&0&0&0&0&0&0&0&0 \\
\cline{2-13}
5&0&0&0&0&0&+&0&+&-&0&0&- \\
6&0&0&0&0&0&0&+&0&0&0&0&0 \\
7&0&0&0&0&+&0&0&0&0&0&0&0 \\
8&0&0&0&0&0&0&+&0&0&0&0&0 \\
\cline{2-13}
9'&0&0&0&0&0&0&0&0&0&+&0&0 \\
9''&0&0&0&-&0&0&0&0&+&0&0&0 \\
\cline{2-13}
10&0&0&0&-&0&0&0&0&0&0&0&+ \\
11&0&0&0&0&0&0&0&0&0&0&+&0 \\
\end{block}
\end{blockarray}=Y$$
Observe that $Y$ is minimally AP-irreducible. Draw the digraph $D(Y)$, and color the arc $(\overrightarrow{i,j})$ blue if $y_{ij}=+$, red if $y_{ij}=-$.
\begin{figure}[H]
\centering
\begin{tikzpicture}[scale=0.5]
\tikzset{edge/.style={decoration={markings,mark=at position 0.6 with {\arrow[scale=1.5,>=stealth]{>}}}, postaction={decorate}}}
\draw[thick,blue,edge] (160:7)--(180:8);
\draw[thick,blue,edge] (180:8)--(180:5);
\draw[thick,blue,edge] (180:5)--(160:7);
\draw[thick,blue,edge] (200:7)--(180:8);
\draw[thick,blue,edge] (180:5)--(200:7);
\draw[thick,blue,edge] (340:7)--(0:8);
\draw[thick,blue,edge] (0:8)--(20:7);
\draw[thick,blue,edge] (20:7)--(340:7);
\draw[thick,blue,edge] (340:7)--(0:5);
\draw[thick,blue,edge] (0:5)--(20:7);
\draw[thick,blue,edge] (30:2)..controls(90:0.5)..(150:2);
\draw[thick,blue,edge] (150:2)..controls(90:1.5)..(30:2);
\draw[thick,blue,edge] (225:3)..controls(270:1.5)..(315:3);
\draw[thick,blue,edge] (315:3)..controls(270:2.5)..(225:3);
\draw[thick,red,edge] (340:7)--(30:2);
\draw[thick,red,edge] (150:2)--(200:7);
\draw[thick,red,edge] (160:7)--(20:7);
\draw[thick,red,edge] (225:3)--(200:7);
\draw[thick,red,edge] (340:7)--(315:3);
\foreach \i in {20,160,200,340}{
\filldraw (\i:7) circle (3 pt);}
\foreach \i in {0,180}{
\filldraw (\i:8) circle (3 pt);}
\foreach \i in {30,150}{
\filldraw (\i:2) circle (3 pt);}
\foreach \i in {225,315}{
\filldraw (\i:3) circle (3 pt);}
\foreach \i in {0,180}{
\filldraw (\i:5) circle (3 pt);}
\foreach \i/\j in {160/1,200/4,340/5,20/7}{
\node at (\i:7.5) {\j};}
\foreach \i/\j in {180/2, 0/6}{
\node at (\i:8.5) {\j};}
\foreach \i/\j in {180/3, 0/8}{
\node at (\i:4.5) {\j};}
\foreach \i/\j in {230/10, 310/11}{
\node at (\i:3.5) {\j};}
\foreach \i/\j in {145/9'', 35/9'}{
\node at (\i:2.6) {$\j$};}
\end{tikzpicture}
\caption{A diagram of $D(Y)$}
\end{figure}
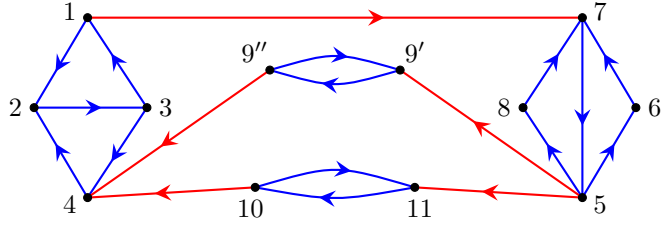
Here for $Y_+$, we have $\alpha_1=\{1,2,3,4\}$, $\alpha_2=\{5,6,7,8\}$, $\alpha_3=\{9',9''\}$ and $\alpha_4=\{10,11\}$. Moreover, the red arcs are $(\overrightarrow{1,7}),(\overrightarrow{5,9'}),(\overrightarrow{9'',4}),(\overrightarrow{10,4}),(\overrightarrow{5,11})$. Associate $D(Y)$ with $D_0$, given by the following diagram. 
\begin{figure}[H]
\centering
\begin{tikzpicture}[scale=0.8]
\tikzset{edge/.style={decoration={markings,mark=at position 0.6 with {\arrow[scale=1.5,>=stealth]{>}}}, postaction={decorate}}}
\draw[thick,edge] (4,0)--(0,0);
\draw[thick,edge] (0,0)--(-4,0);
\draw[thick,edge] (-4,0)..controls(0,1)..(4,0);
\draw[thick,edge] (4,0)--(0,-1);
\draw[thick,edge] (0,-1)--(-4,0);
\foreach \i/\j in {4/0,-4/0,0/-1,0/0}{
\filldraw (\i,\j) circle (2 pt);}
\foreach \i/\j/\k in {4.4/0/v_2,-4.4/0/v_1,0/0.4/v_3,0/-0.7/v_4}{
\node at (\i,\j) {$\k$};}
\end{tikzpicture}
\caption{A diagram of $D_0$}
\end{figure}
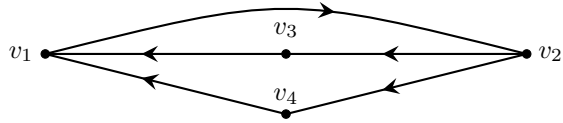
A nested sequence of vertex sets of $D_0$ satisfying the properties P1, P2, P3 of Lemma \ref{lem3.8} is given by: 
$$V_{0,1}=\{v_1,v_2,v_3\}\qquad V_{0,2}=\{v_1,v_2,v_3,v_4\}$$
Construct $B_{[1,0]}$ in accordance with $V_{0,1}$ as follows:
$$\begin{blockarray}{cccc}
    & v_1 & v_2 & v_3 \\
\begin{block}{c[c|cc]}
v_1 & + & - & 0 \\
\cline{2-4} 
v_2 & 0 & + & - \\
v_3 & - & 0 & + \\
\end{block}
\end{blockarray}=B_{[1,0]}$$
Since $B_{[1,0]}$ is irreducible and all nonzero off-diagonal entries of $B_{[1,0]}$ are $-$, by Lemma \ref{lem3.6}, $B_{[1,0]}$ allows algebraic positivity. Since all diagonal entries of $B_{[1,0]}$ are $+$, by Theorem \ref{thm1.1}, $B_{[1,0]}$ allows an algebraically positive matrix with a simple positive eigenvalue and corresponding left, right positive eigenvectors.

Consider the subgraph of $D(Y)$ induced by $\alpha_1=\{1,2,3,4\}$, which is not a directed cycle. Now $(\overrightarrow{1,7})$ is a red arc such that $1\in\alpha_1$ and $7\in\alpha_2$. So we consider a nested sequence of vertex sets of the subgraph of $D(Y)$ induced by $\alpha_1$, as follows, that satisfy the properties P1, P2, P3 of Lemma \ref{lem3.8}:
$$V_{1,1}=\{1,2,3\}\qquad V_{1,2}=\{1,2,3,4\}$$
Observe that $(\overrightarrow{3,4})$ and $(\overrightarrow{4,2})$ are blue arcs in $D(Y)$, where $2,3\in V_{1,1}$ and $4\in V_{1,2}\setminus V_{1,1}$. Further, $(\overrightarrow{9'',4})$ is a red arc, where $9''\in\alpha_3$ and $4\in V_{1,2}\setminus V_{1,1}$. So we take $(3,2)$-th entry as $+$, and $(1,v_2)$-th entry and $(v_3,2)$-th entry as $-$ to construct $B_{[1,0],1}$ as follows. 
$$\begin{blockarray}{cccccc}
&1&2&3&v_2&v_3 \\
\begin{block}{c[ccc|cc]}
1&0&+&0&-&0 \\
2&0&0&+&0&0 \\
3&+&+&0&0&0 \\
\cline{2-6}
v_2&0&0&0&+&- \\
v_3&0&-&0&0&+ \\
\end{block}
\end{blockarray}=B_{[1,0],1}$$
By Lemma \ref{lem3.5} and Theorem \ref{thm2.4}.2, $B_{[1,0],1}$ allows an algebraically positive matrix satisfying the hypothesis of Theorem \ref{thm2.5}. Since $V_{1,2}=\alpha_1$, we take $(3,2)$-th entry as $0$ and $(v_3,4)$-th entry as $-$ to construct $B_{[1,0],2}$ as follows. 

$$\begin{blockarray}{ccccccc}
&1&2&3&4&v_2&v_3 \\
\begin{block}{c[ccc|c|cc]}
1&0&+&0&0&-&0 \\
2&0&0&+&0&0&0 \\
3&+&0&0&+&0&0 \\
\cline{2-7}
4&0&+&0&0&0&0 \\
\cline{2-7}
v_2&0&0&0&0&+&- \\
v_3&0&0&0&-&0&+ \\
\end{block}
\end{blockarray}=B_{[1,0],2}$$
By Lemma \ref{lem3.5} and Theorem \ref{thm2.5}.1, $B_{[1,0],2}$ allows an algebraically positive matrix with a simple positive eigenvalue and corresponding left, right positive eigenvectors. Set $B_{[1,1]}=B_{[1,0],2}$.

Consider the subgraph of $D(Y)$ induced by $\alpha_2=\{5,6,7,8\}$, which is not a directed cycle. Now $(\overrightarrow{5,9'})$ is a red arc, where $5\in\alpha_2$ and $9'\in\alpha_3$. So we consider a nested sequence of vertex sets of the subgraph of $D(Y)$ induced by $\alpha_2$, as follows, that satisfy the properties P1, P2, P3 of Lemma \ref{lem3.8}:
$$V_{2,1}=\{5,6,7\}\qquad V_{2,2}=\{5,6,7,8\}$$
Observe that $(\overrightarrow{5,8})$ and $(\overrightarrow{8,7})$ are blue arcs in $D(Y)$, where $5,7\in V_{2,1}$ and $8\in V_{2,2}\setminus V_{1,1}$. Further, $(\overrightarrow{1,7})$ is a red arc, where $1\in\alpha_1$ and $7\in V_{2,1}$. So we take $(5,7)$-th entry as $+$, and $(1,7)$-th entry and $(5,v_3)$-th entry as $-$ to construct $B_{[1,1],1}$ as follows. 

\begin{align*}
\begin{blockarray}{ccccccccc}
&1&2&3&4&5&6&7&v_3 \\
\begin{block}{c[cccc|ccc|c]}
1&0&+&0&0&0&0&-&0 \\
2&0&0&+&0&0&0&0&0 \\
3&+&0&0&+&0&0&0&0 \\
4&0&+&0&0&0&0&0&0 \\
\cline{2-9}
5&0&0&0&0&0&+&+&- \\
6&0&0&0&0&0&0&+&0 \\
7&0&0&0&0&+&0&0&0 \\
\cline{2-9}
v_3&0&0&0&-&0&0&0&+ \\
\end{block}
\end{blockarray}=B_{[1,1],1}
\end{align*}
By Lemma \ref{lem3.5} and Theorem \ref{thm2.4}.3, $B_{[1,1],1}$ allows an algebraically positive matrix satisfying the hypothesis of Theorem \ref{thm2.5}. Since $V_{2,2}=\alpha_2$, we take $(5,7)$-th entry as $0$ to construct $B_{[1,1],2}$ as follows. 
\begin{align*}
\begin{blockarray}{cccccccccc}
&1&2&3&4&5&6&7&8&v_3 \\
\begin{block}{c[cccc|ccc|c|c]}
1&0&+&0&0&0&0&-&0&0 \\
2&0&0&+&0&0&0&0&0&0 \\
3&+&0&0&+&0&0&0&0&0 \\
4&0&+&0&0&0&0&0&0&0 \\
\cline{2-10}
5&0&0&0&0&0&+&0&+&- \\
6&0&0&0&0&0&0&+&0&0 \\
7&0&0&0&0&+&0&0&0&0 \\
\cline{2-10}
8&0&0&0&0&0&0&+&0&0 \\
\cline{2-10}
v_3&0&0&0&-&0&0&0&0&+ \\
\end{block}
\end{blockarray}=B_{[1,1],2}
\end{align*}
By Lemma \ref{lem3.5} and Theorem \ref{thm2.5}.1, $B_{[1,1],2}$ allows an algebraically positive matrix with a simple positive eigenvalue and corresponding left, right positive eigenvectors. Set $B_{[1,2]}=B_{[1,1],2}$.

Consider the subgraph of $D(Y)$ induced by $\alpha_3=\{9',9''\}$, which is a directed cycle. So we construct $B_{[1,3]}$ as follows. 

\begin{align*}
\begin{blockarray}{ccccccccccc}
&1&2&3&4&5&6&7&8&9'&9'' \\
\begin{block}{c[cccc|cccc|cc]}
1&0&+&0&0&0&0&-&0&0&0 \\
2&0&0&+&0&0&0&0&0&0&0 \\
3&+&0&0&+&0&0&0&0&0&0 \\
4&0&+&0&0&0&0&0&0&0&0 \\
\cline{2-11}
5&0&0&0&0&0&+&0&+&-&0 \\
6&0&0&0&0&0&0&+&0&0&0 \\
7&0&0&0&0&+&0&0&0&0&0 \\
8&0&0&0&0&0&0&+&0&0&0 \\
\cline{2-11}
9'&0&0&0&0&0&0&0&0&0&+ \\
9''&0&0&0&-&0&0&0&0&+&0 \\
\end{block}
\end{blockarray}=B_{[1,3]}
\end{align*}
By Lemma \ref{lem3.5} and Theorem \ref{thm2.4}.1, $B_{[1,3]}$ allows an algebraically positive matrix with a simple positive eigenvalue and corresponding left, right positive eigenvectors.

Consider the set $V_{0,2}=\{v_1,v_2,v_3,v_4\}$. Observe that $(\overrightarrow{5,11})$ and $(\overrightarrow{10,4})$ are arcs in $D(Y)$, where $4,5\in\alpha_1\cup\alpha_2\cup\alpha_3$ and $10,11\in\alpha_4$. So we take $(5,v_4)$-th entry and $(v_4,4)$-th entry as $-$, and $(v_4,v_4)$-th entry as $+$ to construct $B_{[2,0]}$ as follows.

\begin{align*}
\begin{blockarray}{cccccccccccc}
&1&2&3&4&5&6&7&8&9'&9''&v_4 \\
\begin{block}{c[cccc|cccc|cc|c]}
1&0&+&0&0&0&0&-&0&0&0&0 \\
2&0&0&+&0&0&0&0&0&0&0&0 \\
3&+&0&0&+&0&0&0&0&0&0&0 \\
4&0&+&0&0&0&0&0&0&0&0&0 \\
\cline{2-12}
5&0&0&0&0&0&+&0&+&-&0&- \\
6&0&0&0&0&0&0&+&0&0&0&0 \\
7&0&0&0&0&+&0&0&0&0&0&0 \\
8&0&0&0&0&0&0&+&0&0&0&0 \\
\cline{2-12}
9'&0&0&0&0&0&0&0&0&0&+&0 \\
9''&0&0&0&-&0&0&0&0&+&0&0 \\
\cline{2-12}
v_4&0&0&0&-&0&0&0&0&0&0&+ \\
\end{block}
\end{blockarray}=B_{[2,0]}
\end{align*}
Since $(5,4)$-th entry of $B_{[1,3]}$ is $0$, we take a super-pattern of $B_{[1,3]}$ by replacing \enquote*{$0$} with \enquote*{$-$} at $(5,4)$-th entry. By Lemma \ref{lem3.5} and \ref{lem3.12}, that super-pattern allows an algebraically positive matrix with a simple positive eigenvalue and corresponding left, right positive eigenvectors. Therefore by Lemma \ref{lem3.5} and Theorem \ref{thm2.2}, $B_{[2,0]}$ allows an algebraically positive matrix satisfying the hypothesis of Theorem \ref{thm2.4}.

Consider the subgraph of $D(Y)$ induced by $\alpha_4=\{10,11\}$, which is a directed cycle. So we construct $B_{[2,1]}$ as follows. 
$$\begin{blockarray}{ccccccccccccc}
&1&2&3&4&5&6&7&8&9'&9''&10&11 \\
\begin{block}{c[cccc|cccc|cc|cc]}
1&0&+&0&0&0&0&-&0&0&0&0&0 \\
2&0&0&+&0&0&0&0&0&0&0&0&0 \\
3&+&0&0&+&0&0&0&0&0&0&0&0 \\
4&0&+&0&0&0&0&0&0&0&0&0&0 \\
\cline{2-13}
5&0&0&0&0&0&+&0&+&-&0&0&- \\
6&0&0&0&0&0&0&+&0&0&0&0&0 \\
7&0&0&0&0&+&0&0&0&0&0&0&0 \\
8&0&0&0&0&0&0&+&0&0&0&0&0 \\
\cline{2-13}
9'&0&0&0&0&0&0&0&0&0&+&0&0 \\
9''&0&0&0&-&0&0&0&0&+&0&0&0 \\
\cline{2-13}
10&0&0&0&-&0&0&0&0&0&0&0&+ \\
11&0&0&0&0&0&0&0&0&0&0&+&0 \\
\end{block}
\end{blockarray}=B_{[2,1]}$$
By Lemma \ref{lem3.5} and Theorem \ref{thm2.4}.1, $B_{[2,1]}$ allows an algebraically positive matrix with a simple positive eigenvalue and corresponding left, right positive eigenvectors. Moreover, $B_{[2,1]}=Y$. Now by Lemma \ref{lem3.5}, \ref{lem3.13} and Remark \ref{rem3.14}, $X$ allows a matrix with a simple positive eigenvalue and corresponding left, right positive eigenvectors. Therefore by Theorem \ref{thm1.1}, $X$ allows algebraic positivity.

\section{Conclusion}\label{sec5}
We have seen that if $A$ is a minimally AP-irreducible sign pattern matrix of order $n$ with all diagonal entries equal to $0$ such that $A[\alpha,\beta]$ contains no $+$ for any two distinct irreducible components $\alpha$ and $\beta$ of $A_+$, then $A$ has at most $2n-2$ nonzero entries. We have also seen in Section \ref{sec2} that if $A$ allows an algebraically positive matrix with a simple positive eigenvalue, then we can construct a higher-order algebraically positive matrix keeping the condition of AP-irreducibility intact. So we hope that the following results are true.

\begin{conj}
If $A$ is a minimally AP-irreducible sign pattern matrix of order $n$ with all diagonal entries equal to $0$, then $A$ has at most $2n-2$ nonzero entries.
\end{conj}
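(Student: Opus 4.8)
The plan is to keep the component-decomposition skeleton of Lemma \ref{lem3.11} but discharge its extra hypothesis. Write $\alpha_1,\dots,\alpha_m$ for the irreducible components of $A_+$, with $|\alpha_i|=n_i$ and $\sum_i n_i=n$, and contract each $\alpha_i$ to a single super-vertex to record the inter-component arcs. First I would re-establish, now \emph{without} the no-$+$-between-components assumption, two structural facts: every nonzero entry lying inside a single component is a $+$, and each $A_+[\alpha_i]$ is minimally strongly connected. Both follow from minimality together with the fact that $A_+[\alpha_i]$ is strongly connected. Indeed, if $a_{pq}=-$ with $p,q\in\alpha_i$, deleting it cannot break condition (b) (no $+$ is lost), cannot disconnect $D(A)$ (a $+$-path $p\rightsquigarrow q$ survives inside $\alpha_i$), and cannot disconnect $D(B_A)$ (the $+$-arcs of $\alpha_i$ already make $D(B_A)[\alpha_i]$ strongly connected), contradicting minimality; the same three checks show no intra-component $+$-arc is redundant for connectivity, since any such arc is not the unique $+$ in its row or column. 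Hence, by Lemma \ref{lem3.10}, the intra-component entries number at most $\sum_i(2n_i-2)=2n-2m$.

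It then remains to prove that the number of inter-component nonzero entries is at most $2m-2$. Because each component is internally strongly connected through $+$-arcs in both $D(A)$ and $D(B_A)$, strong connectivity of $D(A)$ and of $D(B_A)$ is equivalent to strong connectivity of the two contracted digraphs $\widehat A$ and $\widehat B$ on the $m$ component-vertices: $\widehat A$ carries every inter $+$-arc and every inter $-$-arc in its forward direction, while $\widehat B$ carries every inter $+$-arc forward and every inter $-$-arc reversed. Minimality of $A$ now forces each inter-component arc to be critical for the strong connectivity of $\widehat A$, or for that of $\widehat B$, or (for a $+$-arc incident to a singleton component, which needs a $+$ in its row and column) for condition (b). I would also record two rigidity facts that the small cases confirm: the inter $+$-arcs are acyclic on the components (a $+$-cycle across components would merge them), and no two inter arcs can have the same effect on both $\widehat A$ and $\widehat B$ (otherwise one of them is removable). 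The target inequality $|P^*|+|N^*|\le 2m-2$, where $P^*,N^*$ are the inter $+$- and $-$-arcs, is essentially the original statement one dimension down, but now \emph{without} condition (b) available throughout.

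The crux, and the place I expect the real difficulty, is precisely this inter-component bound. The trouble is that neither $\widehat A$ nor $\widehat B$ is by itself minimally strongly connected: an arc redundant for one may be the unique certificate of connectivity for the other, so the naive sum of two separate $2m-2$ bounds is far too weak and the two requirements must be charged against a single budget. My proposed attack is an ear-decomposition and charging argument: fix an ear decomposition of $\widehat B$ and charge each arc either to the ear it closes or, if it is $\widehat B$-redundant, to the distinct $\widehat A$-bridge or singleton-$+$ obligation that minimality attaches to it, then argue these charges are injective so that the total is at most $2m-2$. A complementary route is induction on $m$: splice out a singleton component $\{v\}$ by deleting $v$ together with its forced $+$-in and $+$-out arcs and reconnecting their endpoints, reducing $m$ and the arc budget by matching amounts while preserving strong connectivity of both contracted digraphs. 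Making either argument rigorous — in particular handling inter arcs incident to singletons and ruling out configurations that would need a third arc between a single pair of components — is the principal gap; since the bound is attained with equality (as in the tight $n=5$, $m=3$ configuration one can build from two $2$-cycles and a singleton), any successful argument must be delicate enough to remain sharp.
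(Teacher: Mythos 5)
Your proposal does not prove the statement; it reduces it to an unproven claim and stops, as you yourself concede. To separate what is and is not established: your first paragraph is sound. In a minimally AP-irreducible pattern with zero diagonal, every intra-component entry is a $+$ (deleting an intra-component $-$ cannot destroy irreducibility of $A$, the row/column-$+$ condition, or irreducibility of $B_A$, since the $+$-arcs of the component allow rerouting in both $D(A)$ and $D(B_A)$), and each $A_+[\alpha_i]$ is minimally strongly connected (your degree observation is the right one: if an intra-component $+$-arc $(p,q)$ were redundant for strong connectivity of $A_+[\alpha_i]$, then $p$ would keep an outgoing $+$ and $q$ an incoming $+$ inside $\alpha_i$, so its deletion would violate nothing, contradicting minimality); Lemma \ref{lem3.10} then bounds the intra-component entries by $2n-2m$. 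But the second half — that the inter-component entries number at most $2m-2$ — is never proved. You name two strategies (an ear decomposition of $\widehat B$ with an injective charging, and induction on $m$ by splicing out a singleton component) and explicitly admit that neither is carried out: the injectivity of the charges, the possibility of several arcs between one pair of components, and arcs incident to singleton components (where the row/column-$+$ condition rather than connectivity can be what minimality protects) are all left open. That concession is the entire difficulty: an inter-component arc redundant for $\widehat A$ may be the unique connectivity certificate for $\widehat B$ and vice versa, so the two requirements share one budget and no single application of Lemma \ref{lem3.10} suffices.

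You should also know how this sits relative to the paper: the statement you were given is posed there as a conjecture, and the paper offers no proof of it. The closest proven result is Lemma \ref{lem3.11}, which adds the hypothesis that $A[\alpha,\beta]$ contains no $+$ for any two distinct irreducible components $\alpha,\beta$ of $A_+$. Under that hypothesis the inter-component structure collapses: each block $A[\alpha,\beta]$ contains at most one nonzero entry, necessarily a $-$, the contracted minus-digraph (the matrix $X$ in that proof) is itself minimally irreducible, and Lemma \ref{lem3.10} applied to it yields the $2m-2$ inter-component bound immediately. Your reduction correctly identifies that dropping this hypothesis leaves exactly the joint $\widehat A$/$\widehat B$ budget problem, and your structural observations (inter $+$-arcs are acyclic on the components; no two inter-component arcs can be simultaneously redundant in the same way) are reasonable first steps. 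But the gap you flag is genuine, and closing it would settle the paper's open conjecture rather than reproduce an existing argument.
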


\begin{conj}\label{conj5.2}
A sign pattern matrix $A$ allows algebraic positivity if and only if either $A$ or $-A$ is AP-irreducible.
\end{conj}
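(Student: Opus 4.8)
The statement splits into two implications, and the forward direction is already within reach of the results in hand. Suppose first that $A$ allows algebraic positivity. Then some $\tilde A\in Q(A)$ is algebraically positive, so by Theorem \ref{thm3.1} it is irreducible; since irreducibility depends only on the zero/nonzero pattern, both $A$ and $-A$ are irreducible. By Theorem \ref{thm3.2}, either every row and column of $A$ contains a $+$ or every row and column contains a $-$, and by Theorem \ref{thm3.3}, $B_A$ is irreducible. In the first case $A$ is AP-irreducible by definition. In the second case every row and column of $-A$ contains a $+$, so it remains only to check that $B_{-A}$ is irreducible; inspecting supports, the arc $\overrightarrow{i,j}$ lies in $D(B_A)$ iff $a_{ij}=+$ or $a_{ji}=-$, while it lies in $D(B_{-A})$ iff $a_{ij}=-$ or $a_{ji}=+$, so $D(B_{-A})$ is exactly the arc-reversal of $D(B_A)$, and a digraph is strongly connected iff its reversal is. Hence $B_{-A}$ is irreducible and $-A$ is AP-irreducible, completing the forward direction.

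For the converse I would first record the symmetry that $A$ allows algebraic positivity if and only if $-A$ does: if $f(\tilde A)$ is positive then $g(-\tilde A)$ is positive for $g(x)=f(-x)$, and $-\tilde A\in Q(-A)$. Thus it suffices to prove that an AP-irreducible $A$ allows algebraic positivity, and this is precisely Theorem \ref{thm3.16} with the extra hypothesis ``$A[\alpha,\beta]$ contains no $+$ for distinct irreducible components $\alpha,\beta$ of $A_+$'' deleted. So the entire burden of the conjecture is to extend Theorem \ref{thm3.16} to AP-irreducible patterns in which $+$ entries may occur between distinct irreducible components of $A_+$.

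My plan is to push through the constructive scheme of Theorem \ref{thm3.16}, exploiting that the toolkit of Section \ref{sec2} already contains attachment gadgets for a positive connecting entry (Theorem \ref{thm2.3}, invoked in Step 3 of Theorem \ref{thm3.16}) as well as for a negative one (Theorem \ref{thm2.2}), together with the internal-component builders Theorems \ref{thm2.4} and \ref{thm2.5}. In the unrestricted setting the $+$ arcs between distinct components make the condensation of $D(A_+)$ a nontrivial acyclic digraph, while the $-$ arcs supply the feedback that renders $D(A)$ strongly connected. I would therefore replace the red-arc digraph $D_0$ of Theorem \ref{thm3.16} by a combined inter-component digraph recording both signs of arc, choose a processing order of the components compatible with it (via Lemma \ref{lem3.8} applied to a suitable minimally strongly connected subpattern), attach each newly introduced component across a single connecting arc by Theorem \ref{thm2.3} when it is positive and by Theorem \ref{thm2.2} when it is negative, and finally invoke Theorem \ref{thm3.7} to pass back from the minimal subpattern to $A$.

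The main obstacle I anticipate is exactly what the extra hypothesis was designed to avoid, and it is twofold. First, a singleton component $\{v\}$ of $A_+$ may carry no diagonal $+$, its only $+$ being a cross-component arc; such a $v$ cannot be accommodated by passing to a subpattern meeting the hypothesis, since deleting that arc would leave the $v$-th row without a $+$ and destroy AP-irreducibility, so it must be absorbed directly into a gadget. Second, and more seriously, every gadget in Section \ref{sec2} attaches a component through a \emph{single} connecting entry, whereas once cross-component $+$ arcs are permitted a component may be joined to the already-built part by several arcs of mixed sign originating in several different components, and the clean dichotomy that the hypothesis forces at each attachment step of Theorem \ref{thm3.16} then fails. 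The crux will be to add such arcs one at a time while maintaining a simple positive eigenvalue with strictly positive left and right eigenvectors throughout — equivalently, to show that the eigenvector-positivity inequalities (the $\epsilon$-conditions of Theorems \ref{thm2.4} and \ref{thm2.5}) remain simultaneously satisfiable as positive and negative connecting entries accumulate in the same rows and columns — or else to devise a genuinely new multi-arc attachment gadget.
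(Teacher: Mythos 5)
The statement you set out to prove is stated in the paper as Conjecture \ref{conj5.2}; it is an open problem there, and the paper offers no proof of it. Your forward direction is correct, but it is exactly the paper's own remark following Theorem \ref{thm3.3}: Theorems \ref{thm3.1}, \ref{thm3.2} and \ref{thm3.3} together show that if $A$ allows algebraic positivity then $A$ or $-A$ is AP-irreducible. Your supporting observation that $D(B_{-A})$ is the arc-reversal of $D(B_A)$ (so one is strongly connected iff the other is) is sound, as is the sign-flip symmetry reducing the converse to the claim that every AP-irreducible pattern allows algebraic positivity.

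The genuine gap is the converse, and you have not closed it: from the point where you write ``My plan is\ldots'' onward, you describe a strategy and then candidly list the two obstructions that defeat it, without resolving either. This is precisely why the statement is a conjecture rather than a theorem. Every gadget in Section \ref{sec2} (Theorems \ref{thm2.2}--\ref{thm2.5}) attaches new vertices across a \emph{single} connecting entry, and the hypothesis of Theorem \ref{thm3.16} (no $+$ in $A[\alpha,\beta]$ for distinct irreducible components $\alpha,\beta$ of $A_+$) is exactly what guarantees that, in a minimally AP-irreducible subpattern, each attachment step involves one arc of a known sign in a controlled position. Once that hypothesis is dropped, a component of $A_+$ may be joined to the already-built part by several arcs of mixed sign which cannot be deleted without destroying AP-irreducibility (e.g.\ a cross-component $+$ that is the only $+$ in its row), and no result in the paper — nor anything in your sketch — shows that the eigenvector-positivity conditions (the $\epsilon$-inequalities of Theorems \ref{thm2.4} and \ref{thm2.5}) can be maintained through such multi-arc attachments, or constructs a gadget that handles them. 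As written, your argument establishes only the known ``only if'' direction together with a reduction of the ``if'' direction to an unproven assertion, so it does not constitute a proof of the conjecture.
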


It should be noted that the above two conjectures are valid for all known algebraically positive matrices in \cite{AP19,DB19,KQZ16}.

Let $\mathcal{AP}$ be the set of all sign pattern matrices that allow algebraic positivity. To prove Conjecture \ref{conj5.2}, it is enough to show that all AP-irreducible sign pattern matrices with zero diagonal allow algebraic positivity.

\begin{prop}\label{prop3.15}
If $\mathcal{AP}$ contains all AP-irreducible sign pattern matrices with zero diagonal, then it contains all AP-irreducible sign pattern matrices.
\end{prop}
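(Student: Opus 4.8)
The plan is to reduce a general AP-irreducible pattern to a zero-diagonal one by a two-step surgery on the diagonal, mirroring Case II of the proof of Theorem \ref{thm3.16}, and then to invoke the hypothesis and reverse the surgery. Throughout, call a positive diagonal entry $a_{ii}=+$ of an AP-irreducible pattern \emph{essential} if $a_{ij}\neq+$ for all $j\neq i$, or $a_{ji}\neq+$ for all $j\neq i$; otherwise row $i$ and column $i$ each carry a second $+$. (The order-one pattern $[+]$ is trivially algebraically positive, so I assume $n\geq2$.)

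First I would delete the inessential diagonal entries. Let $A$ be AP-irreducible, and let $A'$ be the subpattern obtained by replacing every $-$ on the diagonal, and every inessential $+$ on the diagonal, by $0$. Neither operation destroys AP-irreducibility: both the irreducibility of a pattern and the irreducibility of the matrix $B_A=A_+-(A_-)^T$ are unaffected by changes to diagonal entries (self-loops are irrelevant to strong connectivity); removing a $-$ deletes no $+$; and removing an inessential $+$ leaves another $+$ in its row and in its column, so every row and column of $A'$ still carries a $+$. Hence $A'$ is AP-irreducible, and its diagonal now contains only $0$'s and essential $+$'s. Since $A$ is a super-pattern of $A'$, Theorem \ref{thm3.7} reduces the problem to proving that $A'$ allows algebraic positivity.

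Next I would split the essential positive diagonal entries as in Case II: for each $i$ with $a'_{ii}=+$, remove row and column $i$ and adjoin indices $i',i''$ carrying a positive two-cycle, letting $i'$ inherit the column of $i$ (its in-arcs) and $i''$ inherit the row of $i$ (its out-arcs). The resulting pattern $Y$ has zero diagonal, and one checks — exactly as for the matrix $Y$ in Case II — that $Y$ is AP-irreducible: the splitting preserves strong connectivity, the two-cycle supplies a $+$ in each of the rows and columns $i',i''$ while every other row and column keeps its $+$, and each arc of $B_{A'}$ incident to $i$ is transported to an arc of $B_Y$ incident to $i'$ or $i''$, two vertices joined by a two-cycle in $B_Y$, so $B_Y$ stays irreducible. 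By hypothesis $Y$ allows algebraic positivity, so by Theorem \ref{thm1.1} some $M\in Q(Y)$ has a simple real eigenvalue $\mu$ with positive right and left eigenvectors $\mathbf u,\mathbf v$.

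Finally I would undo the splittings one at a time with Lemma \ref{lem3.13}, moving each two-cycle to the leading position by Lemma \ref{lem3.5}; the eigenvalue $\mu$ and the positivity of the eigenvectors persist at every contraction. The step I expect to be the crux is showing that each reconstructed diagonal entry $\mu+a_{21}-\mu^{2}/a_{12}$ comes out \emph{positive}, which is precisely what forces the reconstructed matrix into $Q(A')$. Two observations settle this. The eigenvalue is automatically positive: the split vertex $i'$ has out-degree one, its single arc being a positive number $a_{12}$, so the $i'$-row of $M\mathbf u=\mu\mathbf u$ reads $a_{12}u_{i''}=\mu u_{i'}$ with $u_{i'},u_{i''}>0$, whence $\mu>0$ (and $\mu$ is unchanged by the contractions). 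And essentiality of $a'_{ii}$ guarantees that either the out-neighbourhood $\mathbf y$ or the in-neighbourhood $\mathbf x$ of $i$ is entirely negative; the matching eigenvector identity — Remark \ref{rem3.14} for $\mathbf y$, or its left-eigenvector analogue read off from $\mathbf v^{T}M=\mu\mathbf v^{T}$ for $\mathbf x$ — then yields $a_{12}a_{21}>\mu^{2}$, so that $\mu+a_{21}-\mu^{2}/a_{12}>\mu>0$. Hence the fully reconstructed matrix lies in $Q(A')$ and, carrying the simple eigenvalue $\mu$ with positive eigenvectors, is algebraically positive by Theorem \ref{thm1.1}; therefore $A'$, and with it $A$ by Theorem \ref{thm3.7}, allows algebraic positivity. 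The genuinely delicate part is this sign bookkeeping in the reconstruction; the remainder is the routine check that the diagonal surgery preserves AP-irreducibility.
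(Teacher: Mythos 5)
Your proposal is correct and follows essentially the same route as the paper's own proof: split each positive diagonal entry into a positive two-cycle on new indices $i',i''$, apply the hypothesis to the resulting zero-diagonal AP-irreducible pattern $Y$, read off a positive simple eigenvalue from the single-entry row $i'$, contract back via Lemma \ref{lem3.5}, Lemma \ref{lem3.13} and Remark \ref{rem3.14}, and finish with Theorem \ref{thm1.1} and Theorem \ref{thm3.7}. The only differences are in the bookkeeping, and they favour you: the paper first passes to a minimally AP-irreducible subpattern (minimality is what forces your \enquote*{essentiality} of the diagonal $+$'s), whereas you clean the diagonal directly, and you explicitly supply the left-eigenvector analogue of Remark \ref{rem3.14} for the case where it is the column, rather than the row, of the split index that contains no off-diagonal $+$ --- a case the paper's bare citation of Remark \ref{rem3.14} passes over in silence.
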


\begin{proof}
Let $A$ be an AP-irreducible sign pattern matrix of order $n$. Let $X$ be a subpattern of $A$ such that $X$ is minimally AP-irreducible. Since $X$ is minimally AP-irreducible, $x_{ii}\neq-$ for all $i\in\{1,2,\ldots,n\}$. Suppose that $x_{ii}=+$ for $i=1,2,\ldots,k$ (without loss of generality). For every $i\in\{1,2,\ldots,k\}$, delete the row and the column corresponding to the index $i$, and append two rows and two columns corresponding to the indices $i'$ and $i''$ such that
\begin{enumerate}
\item both $(i',i'')$-th entry and $(i'',i')$-th entry are $+$, 
\item for every $j\neq i''$, $(i',j)$-th entry is $0$ and $(j,i')$-th entry is the $(j,i)$-th entry of $A$,
\item for every $j\neq i'$, $(j,i'')$-th entry is $0$ and $(i'',j)$-th entry is the $(i,j)$-th entry of $A$.
\end{enumerate}
Let $Y$ be the new sign pattern matrix. Then $Y$ is minimally AP-irreducible and all diagonal entries of $Y$ are 0, and thus $Y$ allows algebraic positivity. Since $(i',i'')$-th entry is the only nonzero entry in $i'$-th row, $Y$ allows an algebraically positive matrix with a simple positive eigenvalue and corresponding left, right positive eigenvectors. Therefore by Lemma \ref{lem3.13} and Remark \ref{rem3.14}, $X$ allows an algebraically positive matrix with a simple positive eigenvalue and corresponding left, right positive eigenvectors. Then by Theorem \ref{thm1.1} and \ref{thm3.7}, $A$ allows algebraic positivity. Therefore $\mathcal{AP}$ contains all AP-irreducible sign pattern matrices.
\end{proof}

\section*{Acknowledgement}
The author thanks Indian Statistical Institute Delhi for providing him with a visiting fellowship. The author also thanks Dr. Sriparna Bandyopadhyay for her careful reading and suggestions for a better presentation of the paper.

\end{document}